\documentclass[12pt,a4paper,twoside,openany]{amsart}
\usepackage{a4,latexsym,amscd,etex}


\usepackage[main=english, ngerman]{babel}
\usepackage{bm} 
\usepackage{amsaddr} 

\usepackage{setspace}
\textwidth16.2cm
\textheight23.0cm
\hoffset-2mm
\setlength{\oddsidemargin}{0mm}
\setlength{\evensidemargin}{0mm}
\headheight=6.15pt                  
\footskip10mm                       
\parindent0pt
\parskip1.3ex plus0.4ex minus 0.2ex         

\usepackage{graphicx}
\usepackage{caption}
\usepackage{subcaption}
\usepackage{float}		
\usepackage{tikz}		
\usepackage{tikz-cd}	
\usepackage{tikz-3dplot}	

\usepackage{xcolor} 
\colorlet{shadecolor}{gray!22} 

\definecolor{myred}{HTML}{cc0000}
\definecolor{mylightgreen}{HTML}{85ad00}
\definecolor{mydarkgreen}{HTML}{3a8600}

\usepackage[
labelfont={sf},		
font={small},		
]{caption}

\newcommand{\executeiffilenewer}[3]{\ifnum\pdfstrcmp{\pdffilemoddate{#1}}{\pdffilemoddate{#2}}>0{\immediate\write18{#3}}\fi} 

\usepackage{multirow}	
\usepackage{tabularx}	
\usepackage{ragged2e}
\newcolumntype{L}[1]{>{\raggedright\arraybackslash}p{#1}}
\newcolumntype{C}[1]{>{\centering\arraybackslash}p{#1}}
\newcolumntype{R}[1]{>{\raggedleft\arraybackslash}p{#1}}
\newcolumntype{J}[1]{>{\justifying\arraybackslash}p{#1}}

\usepackage{rotating}

\usepackage{amsmath}
\usepackage{amstext}
\usepackage{amsthm}
\usepackage{amssymb}
\usepackage{mathtools}
\usepackage{stackengine}
\usepackage{scalerel}
\usepackage{pst-all}

\usepackage{enumerate}	
\usepackage{autobreak}	
\usepackage{verbatim}	
\usepackage{hyperref}	
\usepackage{extarrows}	
\usepackage{framed}		

\newlength{\leftbarwidth} 			
\setlength{\leftbarwidth}{1pt}
\newlength{\leftbarsep}
\setlength{\leftbarsep}{10pt}

\colorlet{leftbarcolor}{black}

\usepackage[
	backend=biber,
	style=alphabetic, 
	sorting=nyt, 
	abbreviate=true, 
	firstinits=true, 
	maxbibnames=3, 
	isbn=false,	
	url=true, 
	doi=true, 
	hyperref=true, 
	backref=false, 
]{biblatex}

\setlength{\bibitemsep}{1em} 
\addbibresource{Literatur.bib} 
\DeclareFieldFormat[article]{title}{{#1}} 
\DeclareFieldFormat[thesis]{title}{{\mkbibemph{#1}}}

\DeclareSourcemap{
	\maps[datatype=bibtex]{
		\map{
			\step[fieldsource=issue, match=\regexp{\A[0-9]+\Z}, final]
			\step[fieldset=number, origfieldval]
			\step[fieldset=issue, null]
		}
	}
}

\DeclareBibliographyDriver{article}{%
	\printnames{author}%
	\addcomma\space
	\printfield{title}%
	\addcomma\space
	\printfield{journaltitle}%
	\space
	\textbf{\printfield{volume}}%
	\space
	\iffieldundef{number}
		{}
		{(\printfield{number})\space}%
	(\printfield{year})%
	\iffieldundef{pages}
		{\adddot}
		{\addcomma\space\printfield{pages}\adddot}%
	\space
	\printfield{doi}%
}

\DeclareBibliographyDriver{book}{%
	\printnames{author}%
	\addcomma\space
	\printfield{title}%
	\addcomma\space
	\iffieldundef{edition}
		{}
		{\printfield{edition}\addcomma\space}%
	\printlist{publisher}%
	\addcomma\space
	\printlist{location}%
	\space
	(\printfield{year})
	\adddot\space
	\printfield{doi}%
}

\DeclareBibliographyDriver{partofbook}{%
	\printnames{author}%
	\addcomma\space
	\printfield{title}%
	\addcomma\space
	\printfield{booktitle}%
	\addcomma\space
	\iffieldundef{volume}
	{}
	{\printfield{volume}\addcomma\space}%
	\iffieldundef{edition}
	{}
	{\printfield{edition}\addcomma\space}%
	\printlist{publisher}%
	\addcomma\space
	\printlist{location}%
	\space
	(\printfield{year})
	\iffieldundef{pages}
	{\adddot}
	{\addcomma\space\printfield{pages}\adddot}%
	\space
	\printfield{doi}%
}

\DeclareBibliographyDriver{thesis}{%
	\printnames{author}%
	\addcomma\space
	\printfield{title}%
	\iffieldundef{year}
	{}
	{\space(\printfield{year})}%
	\addcomma\space
	\printfield{note}%
	\adddot\space
	\printfield{url}%
}

\DeclareBibliographyDriver{online}{%
	\printnames{author}%
	\addcomma\space
	\printfield{title}%
	\iffieldundef{year}
	{}
	{\space(\printfield{year})}%
	\addcomma\space
	\printfield{note}%
	\adddot\space
	\printfield{eprint}%
}

\theoremstyle{plain}
\newtheorem{theo}{Theorem}[section]
\newtheorem*{theo*}{Theorem}
\newtheorem{prop}[theo]{Proposition}
\newtheorem{lem}[theo]{Lemma}
\newtheorem{cor}[theo]{Corollary}

\newtheorem*{conj*}{Conjecture}

\theoremstyle{definition}
\newtheorem{defi}[theo]{Definition}
\newtheorem{rem}[theo]{Remark}
\newtheorem*{rem*}{Remark}


\usepackage{enumitem}


\newcommand{\R}{\mathbb{R}}

\newcommand{\Z}{\mathbb{Z}}
\newcommand{\N}{\mathbb{N}}
\newcommand{\s}{\mathbb{S}}
\newcommand{\M}{\mathbb{M}}
\newcommand{\E}{\mathbb{E}}
\renewcommand{\H}{\mathbb{H}}

\renewcommand{\L}{\mathcal{L}}

\newcommand{\Berger}{\s^3_\mathrm{Berg}}
\newcommand{\EKT}{\E(\kappa,\tau)}

\newcommand{\HR}{\H^2\times\R}

\newcommand{\Nildrei}{\Nil_3}

\newcommand{\SR}{\s^2\times\R}
\newcommand{\SKR}{\s^2(\kappa)\times\R}
\newcommand{\MK}{\M^2(\kappa)}

\newcommand{\MKT}{\M(\kappa,\tau)}
\newcommand{\PSLzwei}{\widetilde{\PSL}_2(\R)}

\newcommand{\abs}[1]{\left|#1\right|}
\newcommand{\norm}[1]{\left\lVert#1\right\rVert}

\DeclareMathOperator{\area}{area}

\DeclareMathOperator{\arcoth}{arcoth}
\DeclareMathOperator{\artanh}{artanh}

\DeclareMathOperator{\Isom}{Isom}

\DeclareMathOperator{\Nil}{Nil}
\DeclareMathOperator{\PSL}{PSL}

\DeclareMathOperator{\sgn}{sgn}

\DeclareMathOperator{\vol}{vol}

\DeclareMathOperator{\sn}{sn_\kappa}
\DeclareMathOperator{\sns}{sn^2_\kappa}
\DeclareMathOperator{\snq}{sn^4_\kappa}
\DeclareMathOperator{\cs}{cs_\kappa}

\DeclareMathOperator{\ct}{ct_\kappa}

\DeclareMathOperator{\arcs}{arcs_\kappa}
\DeclareMathOperator{\arct}{arct_\kappa}

\DeclareMathOperator{\tn}{tn_\kappa}

\newcommand{\modspace}{\mathsf{Mod}_a(\kappa,\tau)}
\newcommand{\hmax}{h_a^\mathrm{max}}
\newcommand{\limH}{\lim\limits_{H\to\infty}}
\newcommand{\Jone}{\mathcal{J}_a^-}
\newcommand{\Jtwo}{\mathcal{J}_a^+}
\newcommand{\Jlimit}{\mathcal{J}_0}

\newcommand{\Jmax}{J_\mathrm{max}}
\newcommand{\Jtube}{J_a^\mathrm{tube}}

\newcommand{\Hbound}{\mathcal{H}_a^\mathrm{Nil}}
\newcommand{\Hex}{\mathcal{E}_a}
\newcommand{\Hcrit}{H_\mathrm{crit}}

\newcommand{\Hlimit}{\mathcal{H}_0}
\newcommand{\Htube}{H_a^\mathrm{tube}}

\newcommand{\Tfamily}{\mathcal{T}_{a}}

\renewcommand{\mod}{\operatorname{mod}}

\renewcommand{\epsilon}{\varepsilon}
\renewcommand{\theta}{\vartheta}
\renewcommand{\phi}{\varphi}

\newcommand{\termb}{\dfrac{\sns(r)+\left(4\tau\sns(\frac{r}{2})-a\right)^2}{\sns(r)}} 

\title[Invariant constant mean curvature tubes in homogeneous spaces]{Invariant constant mean curvature tubes in homogeneous spaces}
\author{Philipp Käse\textsuperscript{1}, Francisco Torralbo\textsuperscript{2}}
\address{\textsuperscript{1}Technische Universität Darmstadt, Fachbereich Mathematik, 64289 Darmstadt, Germany \\
\textsuperscript{2}Universidad de Granada, Departamento de Geometría y Topología, 18071 Granada, Spain}
\email{\textsuperscript{1}kaese@mathematik.tu-darmstadt.de, \textsuperscript{2}ftorralbo@ugr.es}
\subjclass[2010]{53A10; 53C30, 53C42}
\keywords{constant mean curvature, equivariant geometry, invariant surfaces, screw motion, tubes, homogeneous 3-manifolds}

\begin{document}

\begin{abstract}
	We study the global geometry of families of tubes of constant mean curvature invariant under screw-motions in homogeneous $\EKT$-spaces. In particular, we study embeddedness and prove a foliation result. Moreover, we study numerically the isoperimetric profile in the compact case.
\end{abstract}

\maketitle

\section{Introduction}
\label{sec:introduction}

The study of surfaces of constant mean curvature (CMC in the sequel) in homogeneous \mbox{$3$-manifolds} $\EKT$ has attracted the attention of many geometers. Such surfaces have been studied extensively in the last two decades. The construction of examples have played an important role in the search for new features of CMC surfaces not present in the classical theory of CMC surfaces in space forms. The simple case of CMC surfaces invariant by a $1$-parameter group of isometries is of particular relevance because it serves as a first step to understand the non-invariant examples, which have usually a more sophisticated behavior.

This paper focuses on \emph{CMC tubes}. These are constant mean curvature cylinders (or tori in the compact case) invariant by a $1$-parameter group of isometries consisting of \emph{translations along a non-vertical geodesic}. If the geodesic is vertical with respect to the Riemannian submersion $\pi:\EKT\to \MK$, we find the \emph{Hopf cylinders}. These CMC surfaces are given as the preimage of a closed curve of constant curvature in $\MK$ under the projection map $\pi$. Hopf cylinders are also invariant by rotations around the vertical geodesic. We will not discuss these well-known surfaces in the present paper, but rather concentrate on non-vertical tubes. In the space form case $\kappa=4\tau^2$ all tubes are Riemannian tubes, i.e., equidistant surfaces to a given geodesic. This is no longer the case in $\EKT$ with $\kappa\neq4\tau^2$, although these surfaces lie at a bounded distance of that geodesic.

The existence of (non-vertical) CMC tubes was either only set in some particular cases or was completely missing in the classification results about invariant surfaces. The first example of a non-vertical CMC tube was explicitly given by Figueroa, Mercuri, and Pedrosa~\cite[Thm.~6]{figueroa_mercuri_pedrosa} in Heisenberg space $\Nildrei=\E(0,\tau)$ as a CMC cylinder invariant by translation along a \emph{horizontal} geodesic. In $\SR=\E(1,0)$ CMC tori were described by Pedrosa and Ritoré~\cite{pedrosa_ritore, pedrosa}. Although the latter examples were originally constructed as rotationally invariant CMC surfaces, they can also be thought as CMC tubes since rotations around a vertical geodesic in $\SR$ coincides with translations along the corresponding equatorial horizontal geodesic. Later, Montaldo and Onnis \cite{montaldo_onnis} described CMC tubes in $\HR=\E(-1,0)$. Vr\v{z}ina constructed a family of CMC tubes in both $\HR$~\cite{vrzina17} and $\SR$~\cite{vrzina16} using the Daniel correspondence, but embeddedness was missing in the second case. Moreover, there are examples of tubes in $\PSLzwei$, but only for horizontal geodesics~\cite{vrzina18}. On the other hand, tubes were missing in the classification result for invariant surfaces both in Berger spheres~\cite{torralbo10} and $\PSLzwei$~\cite{penafiel12} as shown in~\cite{kaese}.

All these previous results take advantage of the order 4 dihedral symmetry of the examples to construct them. López~\cite{lopez} was the first to spot numerically a change in the behavior in the nodoid family in $\mathrm{Sol}_3$ that gives rise to CMC tubes that only admit an order 2 dihedral symmetry. Very recently and independently Manzano~\cite{manzano24} and Käse~\cite{kaese} gave a general existence result, the former in the case of horizontal geodesics in all homogeneous spaces, and the latter for translations along geodesics that can be described via rotational screw motion groups. The latter approach allows us to deal in a unified way with the wide range of \textit{screw motion geodesics} (geodesic invariant under screw motions with pitch~$a$). For instance, this covers all geodesics if $\kappa>0$. This is the approach followed in this paper. However, it does not cover CMC tubes in $\HR$, because no geodesic in $\HR$ is a screw-motion geodesic (see e.g. \cite{onnis08,saearp,vrzina18}).


The main goal of this work is to study the global geometry of families of \textit{screw motion CMC tubes} in $\EKT$, especially the embeddedness and foliation properties. The main difficulty of the paper is the fact that the existence result given in \cite{kaese} is indirect and gives little geometric information about the surface. We restrict to supercritical curvature, that is $4H^2+\kappa>0$, because otherwise the profile curve of a screw motion CMC surface must be unbounded and no tubes can be expected (see \cite{onnis08,penafiel15} and Section~\ref{sec:screwmotion_surfaces}).

We first study the structure of the moduli space of screw motion CMC surfaces in $\EKT$ in Section~\ref{sec:moduli_space}. This provides further insights in the existence of screw motion CMC tubes. We study the limit of the tube family in terms of the mean curvature and show that the family converges to a geodesic. This gives a geometric explanation of the allowed values for the pitch found in the existence result. We further prove that the profile curves of some tubes do not admit a dihedral symmetry and we present a uniqueness result in Heisenberg space. In Section~\ref{sec:embedding} we study embeddedness of tubes and their compactness in the Berger sphere case. In Section~\ref{sec:foliation} we present an analysis of the foliation of ambient space by the tube family. Finally, Section~\ref{sec:isoperimetric_profile} studies numerically the isoperimetric profile of compact tubes in Berger spheres showing that they do not have better isoperimetric profile than the Hopf tori.

We can sum up our contribution in the following result:
\begin{theo*}
    Let $c_a$ be a geodesic invariant under a screw motion group with pitch $a$ in $\EKT$. Then there exists a constant $\Hex$, such that for any $H>\Hex$ there is a CMC tube invariant by translations along~$c_a$. Moreover:\vspace{-2mm}
    \setlist[1]{itemsep=5pt}
    \begin{itemize}[leftmargin=10mm]
    	\item There exists a continuous family of tubes $\Tfamily=\lbrace T_a(\mu)\colon\mu\in(0,\infty)\rbrace$ (Sec.~\ref{sec:construction_tubefamily}).
    	\item $T_a(\mu)$ converges to the geodesic $c_a$ as $\mu\to\infty$ (Lem.~\ref{lem:limit_infinity}).
    	\item For $\kappa\leq0$, the tube $T_a(\mu)$ is embedded for all $\mu$ in a neighborhood of $0$ and $+\infty$ (Thm.~\ref{theo:embedding_noncompact}).
    	\item For $\kappa>0$, a subfamily of $\Tfamily$ always foliates an open set of ambient space. In some cases the foliation is global (Thm.~\ref{theo:foliation_skr}, Cor.~\ref{cor:foliation_skr}, and Cor.~\ref{cor:foliation_berger}).
    	\item The profile curves of $T_a(\mu)$ in $\SKR$ and in $\Berger$ with $c_a$ horizontal have dihedral symmetry of order $4$, while for all other cases the profile curves only have a dihedral symmetry of order $2$ (Prop.~\ref{prop:dihedral_symmetry}).
    	\item For $\Nildrei$ there exists for any choice of $c_a$ a value $\Hbound$, such that there exists (up to isometry) exactly one tube along $c_a$ of constant mean curvature $H$ for each $H>\Hbound$ (Thm.~\ref{theo:uniqeness_nil}).
    \end{itemize}
\end{theo*}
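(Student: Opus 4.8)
The statement gathers the main results of Sections~\ref{sec:moduli_space}--\ref{sec:foliation}, so the natural plan is to organize everything around the profile curve of a screw-motion CMC surface. First I would pass to the $2$-dimensional orbit space of the $1$-parameter group of translations along $c_a$ --- with coordinates built from the distance $r$ to the axis of the screw motion and a transverse parameter --- and reduce the CMC equation for an invariant surface to an ODE for a planar profile curve, which the isometric invariance endows with a first integral $J$, bringing it to first order (equivalently a Hamiltonian-type modulus $\mu$). Existence of the threshold $\Hex$ and of the continuous family $\Tfamily=\{T_a(\mu):\mu\in(0,\infty)\}$ would then come from a phase-plane analysis: I would write down the effective potential (whose shape is controlled by the rational function $\termas$), locate its critical points, and single out the range of $(H,\mu)$ for which the orbit is a closed loop around a center, i.e.\ the profile of a tube; the indirect existence result of \cite{kaese} guarantees this range is non-empty. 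For the limit $T_a(\mu)\to c_a$ as $\mu\to\infty$ I would check that the center of the potential corresponds exactly to the axis geodesic $c_a$ and that the amplitude of the surrounding orbit tends to $0$; a blow-up of the system near the center together with a uniform bound on the period then yields $C^k$-convergence of the tubes to $c_a$, which is Lemma~\ref{lem:limit_infinity}.

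For embeddedness when $\kappa\le 0$ (Theorem~\ref{theo:embedding_noncompact}) the point is that embeddedness of $T_a(\mu)$ amounts to the profile loop being embedded in the orbit space together with a monodromy condition ensuring the tube closes up in $\EKT$ without self-intersection. Near $\mu=\infty$ this is immediate from the previous step, since a thin tube around an embedded geodesic is embedded; near $\mu=0$ I would analyze the opposite degeneration of the orbit --- onto the boundary of the admissible region of the potential --- and verify that the profile remains embedded there. For $\kappa>0$, where the ambient space is compact in the relevant directions, I would instead set up a foliation argument: define a continuous function $\mu\mapsto q(\mu)$ measuring the position of the loop (an outer-radius or flux-type quantity), prove it is strictly monotone on a subinterval of $(0,\infty)$, and conclude that the corresponding tubes are pairwise disjoint and sweep out an open set, with a global foliation exactly when $q$ attains the full admissible range --- a case-by-case check in $\SKR$ and $\Berger$ giving Theorem~\ref{theo:foliation_skr} and Corollaries~\ref{cor:foliation_skr}, \ref{cor:foliation_berger}.

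The order-$2$ dihedral symmetry of every profile curve is automatic: a planar first-order autonomous orbit is symmetric under the reflection fixing each of its turning points (waist and bulge), and this descends to a $\pi$-rotation symmetry of the profile. An order-$4$ symmetry additionally requires a reflection exchanging the inner and outer arcs of the loop, which is present precisely when $\EKT$ carries an ambient isometry inducing such a reflection on the orbit space; inspecting the isometry groups shows this happens only for $\SKR$ and for $\Berger$ with $c_a$ horizontal, giving Proposition~\ref{prop:dihedral_symmetry}. Finally, for $\Nildrei=\E(0,\tau)$ the function $\sn_\kappa$ degenerates ($\sns(r)=r^2$), the effective potential becomes explicit, and I would prove that along the tube family the mean curvature $H=H_a(\mu)$ is a strictly monotone function of $\mu$ above a threshold $\Hbound$; combined with the classification of screw-motion CMC cylinders along $c_a$, strict monotonicity yields exactly one tube of mean curvature $H$ (up to isometry) for each $H>\Hbound$, i.e.\ Theorem~\ref{theo:uniqeness_nil}.

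The main obstacle, as the authors emphasize, is that \cite{kaese} provides only indirect existence and essentially no information on the shape of the profile curve, so the real work is everywhere to upgrade qualitative existence to quantitative control of the orbits of the reduced system. Concretely, both the foliation result and the $\Nildrei$-uniqueness rest on a monotonicity statement --- for the flux/position function $q(\mu)$, respectively for $H_a(\mu)$ --- and monotonicity of period- or flux-type functions of planar Hamiltonian systems is typically delicate. I expect to need sharp asymptotics of these functions at both ends $\mu\to0$ and $\mu\to\infty$ together with a careful sign analysis of their derivatives exploiting the precise structure of $\termas$.
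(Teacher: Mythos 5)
Your general framework (reduction to the orbit space, the first integral $J$, profile-curve analysis, and the diagnosis that everything hinges on delicate monotonicity statements) matches the paper's setup, but there is a central gap in how you propose to produce tubes. You describe the tube profiles as ``closed loops around a center'' of an effective potential, singled out by an open range of $(H,\mu)$. That is not the mechanism here: the $(r,\sigma)$ dynamics is indeed periodic for all supercritical $(H,J)$, but the height $h$ drifts by $2\delta_a(H,J)$ per period, so the profile closes up in the orbit space only on the zero set $\Theta_a=\delta_a^{-1}(0)$ of the closing defect --- a codimension-one, transcendental condition with no closed form. Generic parameters give nodoid-type curves, not tubes. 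Consequently the first bullet (a \emph{continuous} family $\Tfamily$) does not follow from any phase-plane picture; the paper has to prove that $\delta_a$ is real analytic, that $\Theta_a$ separates $\Xi_a^+$ from $\Xi_a^-$, that it is a one-dimensional $(\operatorname{mod} 2)$ Euler space, and that its boundary in the moduli space consists of finitely many points on $\{J=0\}$ characterized by the integral equation \eqref{eq:intersectionlimit}, before a continuous curve $\mu\mapsto(\Htube(\mu),\Jtube(\mu))$ can be extracted. Your proposal contains no substitute for this step, and the limit $\mu\to\infty$ and the embeddedness near $\mu=0$ (which rests on $\hmax\to\frac{\pi}{2}\abs{a}$ for the limiting sphere-type surface) both presuppose it. A smaller but related slip: $c_a$ is not the axis of the screw motion (the axis is the fiber over $r=0$); the tubes collapse onto the geodesic orbit at radius $\rho_a$.

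Two further points. For Proposition~\ref{prop:dihedral_symmetry}, your argument establishes the order-$4$ symmetry when the ambient isometry exists, but the absence of an ambient reflection does not by itself preclude the profile curve from being accidentally symmetric; the paper instead computes that the maximal height is attained at $r(\pi)$, which differs from the mean radius $\frac{1}{2}(r_++r_-)$ exactly when $2\tau^2-a\tau\kappa\neq0$. For the uniqueness in $\Nildrei$, strict monotonicity of $H$ along the family $\Tfamily$ would only show that the \emph{family} contains at most one tube per value of $H$; since it is not known that $\Tfamily$ exhausts all tubes, the paper instead fixes $H$ and proves $J\mapsto\delta_a(H,J)$ is strictly monotone on $(\Jone(H),\Jtwo(H))$ for $H>\Hbound$, which is the statement you would actually need.
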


The existence bound $\Hex$ (from \cite[Thm.~3.6]{kaese}) for a given screw motion geodesic $c_a$ is not necessarily sharp, i.e., there exist tubes along $c_a$ with mean curvature $H<\Hex$ in some cases (see Corollary~\ref{cor:nonsharp_existence}). Given a geodesic $c_a$ and mean curvature $H$ we also cannot guarantee the uniqueness of tubes except for the mentioned case of $\Nildrei$ and the previously know cases of $\SR$ and $\Berger$ with horizontal geodesic $c_a$. Nevertheless, there is numerical evidence that the sharp existence bound only depends on the choice of $c_a$ and tubes are always unique, in support of the following conjecture:

\begin{conj*}\label{conj:tube_existence}
	Given a screw motion geodesic $c$ there exists a unique constant $\Hlimit\geq0$ such that the following holds: There exists a tube of constant mean curvature $H$ invariant by translations along $c$ if and only if $H>\Hlimit$. If such a tube exists, it is unique (up to isometry).
\end{conj*}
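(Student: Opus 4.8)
The plan is to reduce the conjecture to a monotonicity statement for a period function along the tube family $\Tfamily$ of Section~\ref{sec:construction_tubefamily}, and then to establish that monotonicity. Two ingredients are needed: \emph{(i)} that $\Tfamily$ already contains every tube invariant under translations along $c$, which makes the problem one-dimensional, and \emph{(ii)} that along $\Tfamily$ the mean curvature is a strictly monotone function of the parameter, with range an unbounded half-line.

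For \emph{(i)} I would use the description of screw-motion CMC surfaces from Section~\ref{sec:screwmotion_surfaces} and \cite{kaese}: up to an ambient isometry, an $H$-CMC surface invariant under the screw-motion group generated by $c$ is the orbit of a profile curve $\gamma$ in the two-dimensional orbit space, and $\gamma$ solves an autonomous second-order ODE admitting a first integral $E$ (the analogue of the Delaunay flux). For fixed $H$, generalized profile curves thus form a one-parameter family indexed by $E$ in some interval, and the supercriticality hypothesis $4H^2+\kappa>0$ forces the distance $r(s)$ of $\gamma$ to $c$ to oscillate periodically between an $r_{\min}$ and an $r_{\max}$. The curve closes up into a tube precisely when the angle $\Pi(H,E)$ swept by $\pi\circ\gamma$ around $\pi(c)$ over one period of $r$ equals $2\pi$ (a rational multiple of $2\pi$ in general, for possibly non-embedded tubes). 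A winding-number argument on the orbit space together with the dihedral-symmetry analysis of Proposition~\ref{prop:dihedral_symmetry} should identify the locus of admissible $(H,E)$ with the curve parametrized by $\Tfamily$; this exhaustion is already in hand in the symmetric cases — $\Nildrei$ via Theorem~\ref{theo:uniqeness_nil}, and $\SKR$ and $\Berger$ with $c$ horizontal via earlier work.

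For \emph{(ii)}, writing $H=H(\mu)$ on $\Tfamily=\lbrace T_a(\mu)\colon\mu\in(0,\infty)\rbrace$, I would show that $H$ is strictly increasing. As $\mu\to\infty$, Lemma~\ref{lem:limit_infinity} gives $T_a(\mu)\to c$, and since arbitrarily thin tubes about a geodesic have mean curvature tending to $+\infty$ this forces $H(\mu)\to+\infty$. As $\mu\to 0$ the profile curve should degenerate (collapse to a point, limit onto a Hopf-type object, or escape to infinity), so that $\Hlimit:=\inf_\mu H(\mu)\geq 0$ is \emph{not} attained; here $\Hlimit\geq\tfrac12\sqrt{-\kappa}$ is automatic when $\kappa<0$ by supercriticality, and $\Hlimit\leq\Hex$ with strict inequality in the cases of Corollary~\ref{cor:nonsharp_existence}. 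Strict monotonicity then yields a unique $\mu$ with $H(\mu)=H$ for every $H>\Hlimit$ and none for $H\leq\Hlimit$, which together with \emph{(i)} is exactly the conjecture. (If instead one had $H(\mu)\to\infty$ also as $\mu\to 0$, the conjecture would be \emph{false}: there would be two tubes for each $H$ slightly above $\min_\mu H(\mu)$.)

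The crux — and the reason the statement is a conjecture rather than a theorem — is the strict monotonicity in \emph{(ii)}, equivalently monotonicity of the period function $\Pi$ along the tube locus. This is the classical hard point of Delaunay-type theory: the period functions of the relevant ODEs need not be monotone, and where they are, proving it demands delicate estimates on the period integral (differentiation in $E$ and sign control, a Sturm-type comparison, or a linearizing substitution). In $\EKT$ the first integral is substantially more involved than in space forms — it couples the pitch $a$, the torsion $\tau$, and the curvature $\kappa$ through the functions $\sns$ — so I expect the argument to break into cases according to the sign of $\kappa$ and the size of $a$, with the genuine possibility that monotonicity, and hence the uniqueness half of the conjecture, fails for some parameter values. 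A secondary obstacle is completing \emph{(i)} for the non-symmetric spaces (general $\PSLzwei$, and $\Berger$ with $c$ not horizontal), where one must exclude hypothetical tubes whose profile curve carries only the order-$2$ symmetry of Proposition~\ref{prop:dihedral_symmetry}; the $\Nildrei$ argument of Theorem~\ref{theo:uniqeness_nil} is the natural template.
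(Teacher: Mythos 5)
This statement is stated in the paper as a \emph{conjecture}, and the paper contains no proof of it: the authors offer only numerical evidence (see Remark~\ref{rem:construction_tubefamily} and the discussion following Lemma~\ref{lem:intersectionlimit}), together with partial results in special cases (Theorem~\ref{theo:uniqeness_nil} for $\Nildrei$, and the symmetric cases $2\tau^2-a\tau\kappa=0$). Your proposal does not close this gap either --- and you say so yourself. Both of your ingredients are exactly the open points. For \emph{(ii)}, strict monotonicity of $H$ along the tube locus is precisely what the authors cannot prove: their route is monotonicity of $J\mapsto\delta_a(H,J)$ on $(\Jone(H),\Jtwo(H))$, which works in $\Nildrei$ because the auxiliary polynomial degenerates to first degree with sign-controlled coefficients, but fails to be tractable for $\kappa\neq0$ (Remark~\ref{rem:uniqueness_nil}). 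For \emph{(i)}, the issue is not only the non-symmetric spaces: even for a fixed admissible pitch the paper only shows that the tube region $\Theta_a=\delta_a^{-1}(0)$ is a closed, one-dimensional, $(\mathrm{mod}\,2)$ Euler analytic set separating $\Xi_a^+$ from $\Xi_a^-$ (Lemma~\ref{lem:separationproperty}), with boundary points on $\{J=0\}$ constrained by the integral equation~\eqref{eq:intersectionlimit} (Lemma~\ref{lem:intersectionlimit}). A priori $\Theta_a$ may contain several separating curves or components not reached by any chosen $\Tfamily$, so "$\Tfamily$ exhausts all tubes" is itself part of what must be proved, not an input.

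One structural point in your sketch is also off for this setting. The closing condition here is \emph{not} that a swept angle $\Pi(H,E)$ around $\pi(c)$ equals $2\pi$; the angular coordinate $\theta$ is the orbit direction of the screw motion and is quotiented out. The profile curve lives in the two-dimensional orbit space $\MKT/G_a$ with coordinates $(r,h)$, and the tube condition is the vanishing of the \emph{vertical} closing defect $\delta_a(H,J)=\int_{\pi/2}^{3\pi/2}\frac{dh}{d\sigma}\,d\sigma$ (Definition~\ref{def:closingdefect}). The "period function equals $2\pi$" picture is the rotational Delaunay one and does not transfer directly; the correct analogue of your period-monotonicity is monotonicity of $\delta_a$ in $J$ (or transversality of $\Theta_a$ to the lines $H=\mathrm{const}$). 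With that correction your outline matches the paper's own reduction, but the conjecture remains open: neither the exhaustion \emph{(i)} nor the monotonicity \emph{(ii)} is established by your argument.
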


Lemma~\ref{lem:intersectionlimit} ensures that the constant $\Hlimit$ must satisfy equation~\eqref{eq:intersectionlimit}.

We conclude this overview of our results with some remarks on their significance. The existence of embedded screw motion CMC tubes in $\EKT$ is of particular interest. For $\R^3$, Meeks proved that every properly embedded (non-minimal) CMC annulus stays at a bounded distance of a geodesic~\cite{meeks}. Korevaar, Kusner, and Solomon proved that every properly embedded CMC annulus at a bounded distance of a geodesic is rotationally invariant, and thus a Delaunay surface~\cite{korevaar_kusner_solomon}. This result was later adapted to $\H^3$ by Korevaar, Kusner, Meeks, and Solomon~\cite{korevaar_kusner_meeks_solomon} for CMC annuli of supercritical mean curvature. Mazet proved the same result for vertical geodesics in $\HR$: Every properly embedded CMC annuli at a bounded distance of a vertical geodesic is rotational invariant, and thus a Delaunay type surface~\cite{mazet15}.

On the contrary, the existence of embedded screw motion tubes in $\SR$, $\Nildrei$, $\PSLzwei$, and $\Berger$ (see Theorem~\ref{theo:embedding_noncompact} and Theorem~\ref{theo:embedding_compact}) shows that the above results cannot be generalized to any of these spaces, not even for just the vertical case as in Mazet's result. All screw motion tubes stay at a bounded distance of a vertical geodesic, but in general they are not rotationally invariant. The screw motion tubes therefore serve as counterexamples.

The Berger sphere case is of particular interest due to its compactness. Vertical and horizontal geodesics are great circles. However, all other geodesics will only close if a certain rationality condition is met. For the closed case the corresponding tubes are CMC tori that are non-congruent and only rotational invariant in the vertical case. We show that they are embedded for a certain range of the mean curvature (see Section~\ref{sec:embedding}). These examples indicate that the moduli space of embedded CMC tori in the Berger spheres is more involved than in the round sphere case, where Brendle~\cite{brendle} proves the Lawson conjecture (\textit{``The only embedded minimal torus is the Clifford torus''}) and Andrews and Li~\cite{andrews_li} extended the result to constant mean curvature $H>0$ showing that there are embedded tori different from Hopf tori,  but all of them are rotationally invariant. This leads to the question of whether all CMC tori in Berger spheres are invariant by a $1$-parameter group of isometries (not necessarily a group of rotations), or if these new examples present bifurcation properties like in the product spaces~\cite{manzano_torralbo}.

\subsubsection*{Acknowledgments} The first author has been supported by a research fellowship of the DAAD German Academic Exchange Service. The second author is supported by project PID2022-142559NB-I00 funded by MCIN/AEI/\-10.13039/\-501100011033. This paper is part of the first authors PhD project.

\section{Preliminaries}
\label{sec:preliminaries}

In this preliminary section we want to provide the necessary knowledge about the family of homogeneous manifolds $\EKT$, our model for these spaces, and the description of screw motion groups in $\EKT$. We recall some relevant topics about the description and classification of screw motion CMC surfaces in $\EKT$ as in~\cite{kaese}. We also introduce definitions and notation that are relevant throughout the paper.

\subsection{The family $\EKT$ and screw motion groups}\

Homogeneous Riemannian 3-manifolds with isometry group of dimension 4 comprise a $2$-parameter family $\EKT$ of Riemannian submersions $$\EKT \to \MK$$ to a simply connected surface of constant curvature $\kappa$ and bundle curvature $\tau$ with $\kappa\neq4\tau^2$. All fibers are geodesic. We refer to vectors parallel to fibers as \textit{vertical}, and vectors orthogonal to fibers as \textit{horizontal}.

The family contains the product spaces $\MK \times \mathbb{R}=\mathbb{E}(\kappa,0)$ as well as the Lie groups $\Nildrei = \mathbb{E}(0, \tau)$, $\PSLzwei$ ($\kappa < 0$), and $\mathrm{SU}(2)$ ($\kappa > 0$) with some left invariant metrics. The latter are known as Berger spheres~$\Berger$. The case $\kappa=4\tau^2$ corresponds to the space forms $\mathbb{R}^3$ ($\kappa = 0$) and $\mathbb{S}^3$ ($\kappa > 0$). We will not consider space forms and therefore assume $\kappa\neq4\tau^2$ throughout the paper.


In order to study screw motion surfaces in $\EKT \to \MK$ it is advantageous to use geodesic polar coordinates $(r, \theta, z)$ such that $r$ measures the arc-length of a radial geodesic in $\MK$. We use the model from \cite{kaese}:\vspace{3mm}
\begin{equation}\label{model}
	\left\lbrace\ \begin{aligned}
		&\MKT\coloneqq I_\kappa\times[0,2\pi)\times\R\ni(r,\theta,z), \\[3mm]
		&g\coloneqq dr^2
		+\sns\left(r\right)d\theta^2
		+\left(4\tau\sns\!\left(\frac{r}{2}\right)d\theta-dz\right)^2,
	\end{aligned}\right. 
\end{equation}

where $I_\kappa\coloneqq(0,\infty)$ if $\kappa\leq0$ or $I_\kappa\coloneqq(0,\frac{\pi}{\sqrt{\kappa}})$ if $\kappa>0$. Throughout the paper we use the generalized trigonometric functions\vspace{3mm}
\begin{equation*}
	\sn(r)\coloneqq\left\lbrace\begin{array}{ll}
		\frac{1}{\sqrt{\kappa}}\sin\left(\sqrt{\kappa}\,r\right) \\[0.5em]
		r \\[0.5em]
		\frac{1}{\sqrt{-\kappa}}\sinh\left(\sqrt{-\kappa}\,r\right)
	\end{array}\right.
	\hspace{8mm}
	\cs(r)\coloneqq\left\lbrace\begin{array}{ll}
		\cos\left(\sqrt{\kappa}\,r\right) & \text{ for } \kappa>0, \\[0.5em]
		1 & \text{ for } \kappa=0, \\[0.5em]
		\cosh\left(\sqrt{-\kappa}\,r\right) \hspace{8mm}\, & \text{ for } \kappa<0,
	\end{array}\right.
\end{equation*}

as well as $\tn(r)\coloneqq\frac{\sn(r)}{\cs(r)}$ and $\ct(r)\coloneqq\frac{\cs(r)}{\sn(r)}$, all depending continuously on the base curvature $\kappa$.

For $\kappa\leq0$ this is a model of $\EKT$ with the fiber at $r=0$ removed. For $\kappa>0$ it is a model for a covering of $\EKT$ without the fibers through the poles of the underlying $\s^2(\kappa)$ at $r=0$ and $r=\frac{\pi}{\sqrt{\kappa}}$. More specific, $\MKT$ is a covering over the third coordinate that corresponds to the fibers of $\EKT\to\MK$. But it is not the universal covering, since the polar angle $\theta$ is restricted to $[0,2\pi)$. We derive a model with compact fibers for $\Berger$ (still without the fibers at $r=0$ and $r=\frac{\pi}{\sqrt{\kappa}}$) when we consider $z\in \R \mod\frac{8\pi\tau}{\kappa}$ instead of $z\in\R$.

The set
\begin{equation*}
    G_a\coloneqq\lbrace L_{a,s}\colon (r,\theta,z)\mapsto(r,(\theta+s)\mod 2\pi,z+as)\,|\,s\in\R\rbrace
\end{equation*}

is the set of screw motions in $\MKT$ with pitch $a\in\R$ around the fiber over $r=0$. We refer to the fiber over $r=0$ as the \textit{axis} of the screw motion. The set $G_a$ is a closed one-parameter subgroup of $\Isom(\EKT)$ and therefore a Lie subgroup. It consists of rotations for $a=0$, which fix the axis pointwise. Note however, for $\tau\neq0$ the orbits of rotations are not horizontal with respect to the Riemannian submersion. Without loss of generality we can assume $\tau\geq0$ (by a change of orientation of $\MKT$). But with this choice we can no longer restrict the pitch to non-negative values, as the change from~$a$ to~$-a$ is also achieved by a change of orientation of ambient space. We therefore consider~$a\in\R$.

The action of $G_a$ on $\MKT$ is free and proper. We study further properties:

\begin{lem}\label{lem:group_properties}
    Suppose $\kappa\neq4\tau^2$ and define $\epsilon\coloneqq\sgn(\kappa-4\tau^2)$.
    
    $(\textit{i}\,)$ The screw motion group $G_a$ contains a geodesic orbit if and only if the product $a\tau\epsilon$ lies in the following open interval:
   	\begin{equation*}
   		a\tau\epsilon\in\left(-\infty,\frac{\epsilon}{2}\right)\text{ if }\kappa\leq0
   		\qquad\text{or}\qquad
   		a\tau\epsilon\in\left(\frac{4\tau^2\epsilon}{\kappa}-\dfrac{\epsilon}{2},\frac{\epsilon}{2}\right)\text{ if }\kappa>0.
   	\end{equation*}
   	Such a geodesic orbit can be parameterized as $c_a(s)=(\rho_a,s,as)$, where
   	\begin{equation*}
   		\rho_a=\arcs\left(\frac{\kappa}{\kappa-4\tau^2}\,(2a\tau-1)+1\right).
   	\end{equation*}
   	As $a\tau\epsilon\to\frac{\epsilon}{2}$ or $a\tau\epsilon\to\frac{4\tau^2\epsilon}{\kappa}-\frac{\epsilon}{2}$ we obtain the vertical geodesics with $\rho=0$ or $\rho=\frac{\pi}{\sqrt{\kappa}}$, respectively, which have been removed in our model $\MKT$.
   	
    $(\textit{ii}\,)$ For $\kappa>0$ the screw motion group $G_a$ is conjugate to the screw motion group $G_{\tilde a}$, setting $\tilde a\coloneqq\frac{4\tau}{\kappa}-a$. Moreover, the geodesic orbit~$c_a$ is congruent to the geodesic orbit $c_{\tilde a}$.
\end{lem}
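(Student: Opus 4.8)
For part $(\textit{i})$ the idea is to reduce the geodesic condition to a scalar equation via the Killing field generating $G_a$. Every orbit of $G_a$ lies in a level set $\{r=r_0\}$, and after a translation in $\theta$ and $z$ (isometries of \eqref{model}, since $g$ is independent of $\theta$ and $z$) it is parametrized by $\gamma_{r_0}(s)=(r_0,s,as)$, whose velocity is the restriction to $\gamma_{r_0}$ of the Killing field $X=\partial_\theta+a\partial_z$. From \eqref{model} one reads off that $\partial_r$ is a unit field orthogonal to $\partial_\theta$ and $\partial_z$ and that
\begin{equation*}
	\abs{X}^2=\sns(r)+16\tau^2\snq\!\left(\tfrac r2\right)-8a\tau\sns\!\left(\tfrac r2\right)+a^2
\end{equation*}
depends only on $r$. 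Using the half-angle identities $\sns(r)=4\sns\!\left(\tfrac r2\right)-4\kappa\snq\!\left(\tfrac r2\right)$ and $\sns\!\left(\tfrac r2\right)=\tfrac{1-\cs(r)}{2\kappa}$, and writing $u\coloneqq\sns\!\left(\tfrac r2\right)$, this becomes a quadratic $\abs{X}^2=f(u)$ with $f(u)=a^2+4(1-2a\tau)u-4(\kappa-4\tau^2)u^2$. Since $X$ is Killing, $\nabla_X X=-\tfrac12\grad\abs{X}^2=-\tfrac12\big(\tfrac{d}{dr}\abs{X}^2\big)\partial_r$, and $\abs{X}$ is constant along each orbit, so $\gamma_{r_0}$ has constant speed; hence $\gamma_{r_0}$ is a geodesic exactly when $\tfrac{d}{dr}\abs{X}^2$ vanishes at $r_0$.

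Next I would make this explicit. The map $r\mapsto u=\tfrac{1-\cs(r)}{2\kappa}$ is an increasing diffeomorphism of $I_\kappa$ onto $(0,\tfrac1\kappa)$ if $\kappa>0$ and onto $(0,\infty)$ if $\kappa\le0$; and since $\kappa\neq4\tau^2$, $f$ is a genuine quadratic with unique critical point $u^*=\tfrac{1-2a\tau}{2(\kappa-4\tau^2)}$. Thus $G_a$ admits a geodesic orbit if and only if $u^*$ lies in the corresponding image interval, in which case $\rho_a$ is determined by $\sns\!\left(\tfrac{\rho_a}2\right)=u^*$, i.e.\ $\cs(\rho_a)=1-2\kappa u^*=\tfrac{\kappa}{\kappa-4\tau^2}(2a\tau-1)+1$, which is the claimed formula, and $\gamma_{\rho_a}=c_a$. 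Translating the condition that $u^*$ lie in that interval into the stated constraint on $a\tau\epsilon$, with $\epsilon=\sgn(\kappa-4\tau^2)$, is a short sign check: $u^*>0\iff a\tau\epsilon<\tfrac\epsilon2$, and, when $\kappa>0$, $\kappa u^*<1\iff a\tau\epsilon>\tfrac{4\tau^2\epsilon}{\kappa}-\tfrac\epsilon2$. The boundary behaviour is then immediate: $a\tau\epsilon\to\tfrac\epsilon2$ forces $u^*\to0$, hence $\rho_a\to0$, while $a\tau\epsilon\to\tfrac{4\tau^2\epsilon}{\kappa}-\tfrac\epsilon2$ forces $u^*\to\tfrac1\kappa$, hence $\rho_a\to\tfrac\pi{\sqrt\kappa}$ --- the vertical fibres removed from $\MKT$.

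For part $(\textit{ii})$ the only nonroutine step is writing down the right isometry; I would take the lift of the rotation of $\SK$ interchanging its two poles, namely
\begin{equation*}
	\Phi(r,\theta,z)=\left(\tfrac{\pi}{\sqrt\kappa}-r,\ -\theta,\ z-\tfrac{4\tau}{\kappa}\theta\right).
\end{equation*}
This descends to $\EKT$ --- for $\tau\neq0$ one uses the compact model of $\Berger$, whose fibres have $z$-period $\tfrac{8\pi\tau}{\kappa}$, so that the shift of $z$ by $\tfrac{4\tau}{\kappa}\cdot 2\pi$ under $\theta\mapsto\theta+2\pi$ is trivial --- and it is an involution. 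Using $\sn\!\left(\tfrac{\pi}{\sqrt\kappa}-r\right)=\sn(r)$ and $\sns\!\left(\tfrac{\pi}{2\sqrt\kappa}-\tfrac r2\right)=\tfrac1\kappa-\sns\!\left(\tfrac r2\right)$, a direct substitution gives $\Phi^*g=g$, so $\Phi\in\Isom(\EKT)$. Composing the three maps yields $\Phi\circ L_{a,s}\circ\Phi^{-1}=L_{\tilde a,-s}$ with $\tilde a=\tfrac{4\tau}{\kappa}-a$, hence $\Phi\,G_a\,\Phi^{-1}=G_{\tilde a}$. Finally $\Phi(c_a(s))=\big(\tfrac{\pi}{\sqrt\kappa}-\rho_a,\,-s,\,-\tilde a s\big)$, which after the reparametrization $s\mapsto-s$ is the orbit of $G_{\tilde a}$ at radius $\tfrac{\pi}{\sqrt\kappa}-\rho_a$; since $\sns\!\left(\tfrac12\big(\tfrac{\pi}{\sqrt\kappa}-\rho_a\big)\right)=\tfrac1\kappa-u^*(a)=u^*(\tilde a)$, that radius equals $\rho_{\tilde a}$, so $\Phi\circ c_a=c_{\tilde a}$ up to reparametrization, which is the asserted congruence.

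The main obstacle is bookkeeping rather than conceptual: one must handle the generalized trigonometric functions uniformly in $\kappa$ (the two half-angle identities and the two pole-swap formulas) and carry out the elementary, sign-sensitive translation of the conditions on $u^*$ into the stated inequalities for $a\tau\epsilon$. The one step requiring an idea is the choice of $\Phi$ in part $(\textit{ii})$, and even that is essentially forced: the first component $r\mapsto\tfrac\pi{\sqrt\kappa}-r$ must cover the pole-interchanging rotation of the base $\SK$, and the remaining two components are then pinned down by the requirement $\Phi^*g=g$.
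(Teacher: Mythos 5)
Your argument is correct, and for part $(\textit{i}\,)$ it takes a genuinely different route from the paper's. The paper parameterizes the orbit as $(r,\theta+s,z+as)$ and simply asserts that ``a straightforward computation'' (i.e.\ writing out $\nabla_{c'}c'$ in the coordinates of \eqref{model}) shows the curve is geodesic iff $r=\rho_a$ with $a$ in the stated intervals. You instead recognize the orbits as integral curves of the Killing field $X=\partial_\theta+a\partial_z$ and use $\nabla_XX=-\tfrac12\grad\abs{X}^2$ to reduce the geodesic condition to a single scalar equation, $\tfrac{d}{dr}\abs{X}^2(r_0)=0$; after the substitution $u=\sns(\tfrac r2)$ this is just the condition that the vertex $u^*=\tfrac{1-2a\tau}{2(\kappa-4\tau^2)}$ of a parabola lie in the range of $u$, which yields $\rho_a$, the ``if and only if'' intervals for $a\tau\epsilon$, and the limiting vertical geodesics in one stroke. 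This is more conceptual and makes the interval endpoints transparent, at the cost of the half-angle bookkeeping you note (and a small degenerate-case remark needed at $\kappa=0$, where $\arcs$ must be read as a limit, as the paper itself does). For part $(\textit{ii}\,)$ you choose a different conjugating map: your $\Phi(r,\theta,z)=(\tfrac{\pi}{\sqrt\kappa}-r,-\theta,z-\tfrac{4\tau}{\kappa}\theta)$ is $a$-independent, reverses $\theta$, and conjugates $L_{a,s}$ to $L_{\tilde a,-s}$, whereas the paper's $\Phi_a(r,\theta,z)=(\tfrac{\pi}{\sqrt\kappa}-r,\theta,z+(\tfrac{4\tau}{\kappa}-2a)\theta)$ fixes $\theta$ and conjugates $L_{a,s}$ to $L_{\tilde a,s}$. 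Both establish $G_a\sim G_{\tilde a}$ and the congruence $c_a\cong c_{\tilde a}$; your version has the merit that you actually verify $\Phi^*g=g$ via the two pole-swap identities, and that the $z$-shift under $\theta\mapsto\theta+2\pi$ is exactly one fiber period $\tfrac{8\pi\tau}{\kappa}$, so $\Phi$ visibly descends to the compact Berger model (a point worth checking for the paper's $\Phi_a$ as well, whose shift is $\tfrac{8\pi\tau}{\kappa}-4\pi a$). No gaps.
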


\begin{proof}
    $(\textit{i}\,)$ The orbit $G_a(p)$ of a point $p=(r,\theta,z)\in\MKT$ can be parameterized as $c_a(s)=(r,\theta+s,z+as)$. We may assume $\theta=z=0$ because the space is homogeneous. 
    Then a straightforward computation verifies the curve $c_a$ is geodesic if and only if $r=\rho_a$, and the pitch $a$ is contained in the intervals claimed.
    
    $(\textit{ii}\,)$ For $\kappa>0$ consider the isometry
    \begin{equation}\label{eq:isometry_phi}
    	\Phi_a\!:\quad \MKT\to\MKT,
    	\qquad
    	\left(r,\theta,z\right)\mapsto\left(\frac{\pi}{\sqrt{\kappa}}-r,\theta,z+\left(\frac{4\tau}{\kappa}-2a\right)\theta\right).
    \end{equation}
    Then $L_{a,s}=\Phi_a^{-1} \circ L_{\tilde a,s} \circ \Phi_a$, that means $G_a$ is conjugate to $G_{\tilde a}$. Moreover, $\Phi_a$ maps $c_a$ to $c_{\tilde a}$, i.e., the curves are congruent.
\end{proof}

The fact that $G_a$ and $G_{\tilde a}$ with $\tilde a=\frac{4\tau}{\kappa}-a$ are conjugate for $\kappa>0$ reflects the symmetry of the ambient space $\SKR$ or $\Berger$. A screw motion around the fiber over the north pole $r=0$ agrees with the screw motion around the fiber over the south pole $r=\frac{\pi}{\sqrt{\kappa}}$, provided we change the pitch from~$a$ to~$\tilde{a}$. This transformation is represented by a reflection about the equatorial cylinder $\lbrace\frac{\pi}{2\sqrt{\kappa}}\rbrace\times\left[0,2\pi\right]\times\R$, see definition of $\Phi_a$ in \eqref{eq:isometry_phi}. This relation motivates the following definition:

\begin{defi}
	If $\kappa>0$, we call $\tilde{a}\coloneqq\frac{4\tau}{\kappa}-a$ the \textit{conjugate pitch} of $a$.
\end{defi}

In case $G_a$ contains a geodesic orbit, we can interpret the action of $G_a$ as left-translations along the geodesic $c_a$, that is $L_{a,s}(c_a(0))=c_a(s)$. Since $G_a$ and $G_{\tilde a}$ are conjugate for $\kappa>0$, we may focus on $a\tau\epsilon\geq\frac{2\tau^2\epsilon}{\kappa}$ for this case. As we will point out in Section\ref{sec:screwmotion_surfaces}, screw motion tubes exist exactly when $G_a$ contains a geodesic orbit \cite{kaese}. Therefore, we assign a name to these special values of the pitch:

\begin{defi}\label{def:admissible}
    We call the pitch $a$ of the screw motion group $G_a$ \textit{admissible} if
    \begin{equation*}
        a\tau\epsilon\in\left(-\infty,\frac{\epsilon}{2}\right)\text{ if }\kappa\leq0
        \qquad\text{or}\qquad
        a\tau\epsilon\in\left[\frac{2\tau^2\epsilon}{\kappa},\frac{\epsilon}{2}\right)\text{ if }\kappa>0.
    \end{equation*}
\end{defi}

We discuss some further properties of the geodesic orbit $c_a$: 

\begin{lem}\label{lem:angle}
    Suppose $G_a$ contains a geodesic orbit $c_a$. Let $\nu\in[0,\pi]$ be the angle of the geodesic~$c_a$ with respect to the fibers.	
    
    $(\textit{i}\,)$ The angle $\nu$ is constant and given by
    \begin{equation}\label{eq:angle}
    	\cos(\nu)=\frac{g\!\left(c'_a,\frac{\partial}{\partial z}\circ c_a\right)}{\norm{c'_a}_g\norm{\frac{\partial}{\partial z}}_g}
    	=\frac{(a\kappa-2\tau)\epsilon}{\sqrt{(\kappa-4\tau^2)(1+a^2\kappa-4a\tau)}}.
    \end{equation}
    
    $(\textit{ii}\,)$ For $\kappa>0$ the geodesic becomes vertical as $a\to\frac{1}{2\tau}$ or $a\to\frac{4\tau}{\kappa}-\frac{1}{2\tau}$. It is horizontal \hphantom{lllll}~for~$a=\frac{2\tau}{\kappa}$.
    
    $(\textit{iii}\,)$ For $\kappa\leq0$ the geodesic becomes vertical as $a\to\frac{1}{2\tau}$. It is never horizontal as
    \begin{equation*}
    	\lim\limits_{a\to\infty}\cos(\nu)=\sqrt{\frac{\kappa}{\kappa-4\tau^2}}.
    \end{equation*}
\end{lem}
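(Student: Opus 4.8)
\emph{Approach.} I would split $(\textit{i}\,)$ into the constancy of $\nu$ and the computation of its value, and then obtain $(\textit{ii}\,)$ and $(\textit{iii}\,)$ by inspecting formula~\eqref{eq:angle} in combination with Lemma~\ref{lem:group_properties}. For the constancy: the coefficients of $g$ in~\eqref{model} do not depend on $z$, so $\partial_z$ is a Killing field; it is vertical (tangent to the fibres of $\EKT\to\MK$) and of unit length, $g(\partial_z,\partial_z)=1$. For any geodesic $\gamma$ and any Killing field $X$ the function $s\mapsto g(\gamma'(s),X(\gamma(s)))$ is constant --- this is the first integral of the geodesic flow attached to the cyclic coordinate $z$ --- and $\norm{\gamma'}_g$ is constant since $\gamma$ is a geodesic. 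Applied to $\gamma=c_a$, $X=\partial_z$, this gives that $\cos\nu=g(c_a',\partial_z)/\norm{c_a'}_g$ is constant.

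\emph{The value.} Evaluate at $s=0$, where $c_a'(0)=\partial_\theta+a\,\partial_z$ at the point $(\rho_a,0,0)$. Reading the relevant components of $g$ off~\eqref{model} and writing $b\coloneqq4\tau\sns(\tfrac{\rho_a}{2})$, one gets $g(c_a',\partial_z)=a-b$ and $\norm{c_a'}_g^2=\sns(\rho_a)+(a-b)^2$. I would then feed in the radius of the geodesic orbit from Lemma~\ref{lem:group_properties}$(\textit{i}\,)$, namely $\cs(\rho_a)=\tfrac{2\tau(a\kappa-2\tau)}{\kappa-4\tau^2}$, together with the identities $\sns(\tfrac{\rho_a}{2})=\tfrac{1-\cs(\rho_a)}{2\kappa}$ and $\sns(\rho_a)=\tfrac{1-\css(\rho_a)}{\kappa}$ (valid for $\kappa\neq0$; the case $\kappa=0$ is then recovered since~\eqref{eq:angle} depends continuously on $\kappa$). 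A short computation gives
\begin{equation*}
	a-b=\frac{a\kappa-2\tau}{\kappa-4\tau^2},\qquad
	\norm{c_a'}_g^2=\sns(\rho_a)+(a-b)^2=\frac{1+a^2\kappa-4a\tau}{\kappa-4\tau^2},
\end{equation*}
the second identity resting on the cancellation $(\kappa-4\tau^2)+(a\kappa-2\tau)^2=\kappa(1+a^2\kappa-4a\tau)$. Dividing the first by the square root of the second, and rewriting $\kappa-4\tau^2=\epsilon\abs{\kappa-4\tau^2}$ (with $\epsilon^2=1$) to sort out the signs, produces exactly~\eqref{eq:angle}; the radicand there equals $(\kappa-4\tau^2)^2\norm{c_a'}_g^2>0$, so the expression makes sense.

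\emph{Parts $(\textit{ii}\,)$ and $(\textit{iii}\,)$.} These are read off~\eqref{eq:angle}. The geodesic $c_a$ is horizontal iff the numerator $a\kappa-2\tau$ vanishes, i.e. $a=\tfrac{2\tau}{\kappa}$; for $\kappa>0$ this is the (admissible) left endpoint in Definition~\ref{def:admissible}, while for $\kappa\leq0$ --- where $\tau>0$ necessarily, by Lemma~\ref{lem:group_properties}$(\textit{i}\,)$ --- the numerator never vanishes for admissible $a$ (it equals $-2\tau$ if $\kappa=0$, and if $\kappa<0$ its only zero $a=\tfrac{2\tau}{\kappa}$ has $a\tau<0$, violating admissibility), so no admissible $c_a$ is horizontal. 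Substituting $a=\tfrac{1}{2\tau}$ into~\eqref{eq:angle} gives $\cos\nu=1$, which together with Lemma~\ref{lem:group_properties}$(\textit{i}\,)$ (where $\rho_a\to0$) shows the limiting geodesic is the vertical one; for $\kappa>0$ the second vertical limit $a\to\tfrac{4\tau}{\kappa}-\tfrac{1}{2\tau}$ is the conjugate pitch of $\tfrac{1}{2\tau}$ and follows from Lemma~\ref{lem:group_properties}$(\textit{ii}\,)$, or again by direct substitution. Finally, for $\kappa\leq0$ (so $\epsilon=-1$) letting $a\to\infty$ in~\eqref{eq:angle}, numerator and denominator both grow like $\abs a$ and the limit is $\sqrt{\kappa/(\kappa-4\tau^2)}$.

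\emph{Expected obstacle.} There is no genuine obstacle: the lemma is a computation, driven by Lemma~\ref{lem:group_properties} once it supplies $\rho_a$. The only steps that need care are the sign bookkeeping with $\epsilon$ (and with the square root of the possibly negative factor $\kappa-4\tau^2$) when passing from the two displayed identities to~\eqref{eq:angle}, and the degeneracy at $\kappa=0$ of the half-angle and Pythagorean identities, which is handled by a continuity argument in $\kappa$.
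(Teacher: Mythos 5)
Your proposal is correct and follows essentially the same route as the paper: a direct computation of $g(c_a',\partial_z)$, $g(\partial_z,\partial_z)$, and $g(c_a',c_a')$ at the orbit radius $\rho_a$ supplied by Lemma~\ref{lem:group_properties}, yielding the two displayed identities (which match the paper's verbatim) and hence \eqref{eq:angle}, with $(\textit{ii}\,)$ and $(\textit{iii}\,)$ read off from $\cos\nu=\pm1$ (vertical) and $\cos\nu=0$ (horizontal). The Killing-field justification of constancy and the explicit sign bookkeeping with $\epsilon$ are welcome elaborations of what the paper compresses into ``a straightforward computation,'' but they do not change the argument.
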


\begin{proof}
    By a straightforward computation we get
    \begin{equation*}
    	\begin{aligned}
    		&g\left(c'_a,\tfrac{\partial}{\partial z}\right)
    		=a-4\tau\sns\left(\tfrac{\lambda}{2}\right)
    		=\frac{a\kappa-2\tau}{\kappa-4\tau^2},
    		\hspace{12mm}
    		g\left(\tfrac{\partial}{\partial z},\tfrac{\partial}{\partial z}\right)
    		=1, \\
    		&g\left(c'_a,c'_a\right)
    		=\sns(\lambda)+16\tau^2\snq\left(\tfrac{\lambda}{2}\right)-8a\tau\sns\left(\tfrac{\lambda}{2}\right)+a^2
    		=\frac{1+a^2\kappa-4a\tau}{\kappa-4\tau^2},
    	\end{aligned}
    \end{equation*}
    which yields \eqref{eq:angle}. Curves are vertical if $\cos\nu=\pm1$ and horizontal if $\cos\nu=0$.
\end{proof}

\begin{rem}\label{rem:angle}
    For $\kappa>0$ the set of geodesic orbits $\left\lbrace c_a\right\rbrace$ covers all non-vertical geodesics of the ambient space, because $\cos(\nu)$ covers all values in $(-1,1)$. Geodesics with $\cos(\nu)\in[0,1)$ are congruent to geodesics with $\cos(\tilde\nu)=-\cos(\nu)\in(-1,0]$ via $a\leftrightarrow\tilde a$ by Lemma~\ref{lem:group_properties}. For $\kappa=0$ and $\tau\neq0$ this includes all geodesics except the vertical and horizontal case. The latter case was covered by Manzano \cite{manzano24}. For $\kappa<0$ and $\tau\neq0$ it includes all geodesics with angle $\cos(\nu)>\sqrt{\frac{\kappa}{\kappa-4\tau^2}}$. For $\kappa<0$ and $\tau=0$ it covers none of the geodesics. The study of invariant CMC screw motion surfaces in these excluded cases is completely different, see for example \cite{onnis08,penafiel12,saearp,vrzina18}.
\end{rem}

\subsection{CMC Screw Motion Surfaces in $\EKT$}\
\label{sec:screwmotion_surfaces}

We study $G_a$-invariant CMC surfaces. Therefore we summarize below the basic properties of the system of ordinary differential equations for the profile curves, see \cite{kaese} for details and proofs.

Let $\gamma(t)=(r(t),h(t))$ be a curve in the quotient space $\MKT/G_a$ and $\Sigma=G_a(\gamma)$ the generated screw motion surface. Let $\sigma(t)\in\R$ be the angle between the coordinate direction~$\frac{\partial}{\partial r}$ and the tangent vector~$\gamma'(t)$. Then $\gamma$ has unit-speed and $\Sigma$ has constant mean curvature~$H$ if $\gamma$ satisfies
\begin{equation}\label{ode}\tag{ODE}
    \left\lbrace\begin{array}{l}
        r' =\cos\sigma, \\[1.0em]
        h'=\sqrt{\termb}\,\sin\sigma, \\[1.6em]
        2H =\ct(r)\sin\sigma+\sigma'.
    \end{array}\right.
\end{equation}

Moreover, $\gamma$ is a solution of \eqref{ode} if and only if the \textit{energy}
\begin{equation}\label{eq:energy}
    J(r(t),\sigma(t)) \coloneqq \dfrac{2H}{\kappa}\Big(\!\cs(r(t))-1\Big) +\sn(r(t))\sin\sigma(t)
\end{equation}
is constant in $t$ \cite[Thm.~2.3 \& Prop.~2.5]{kaese}. The expression for the energy extends to $\kappa=0$ continuously by taking the limit $\kappa\to0$. For simplicity, we only address the case $\kappa\neq0$ in the following with the understanding that the case $\kappa=0$ is obtained as the limit.


From now on we assume $H>0$ (possibly after a change of orientation). We also assume that the mean curvature is \textit{supercritical}, i.e., $H>\Hcrit$, where\vspace{3mm}
\begin{equation}\label{eq:Hcritical}
	\Hcrit(\EKT)=\left\lbrace \begin{array}{cc}
		0 & \text{for }\kappa\geq0, \\[4mm]
		\tfrac{\sqrt{-\kappa}}{2} & \text{for }\kappa<0.
	\end{array}\right. 
\end{equation}

This conditions is natural for the existence of CMC tubes, because the profile curves of screw motion CMC surfaces with \textit{subcritical} ($H<\Hcrit$) or \textit{critical} ($H=\Hcrit$) mean curvature are unbounded, see \cite{onnis08,penafiel15}. Therefore no tubes can be expected. Indeed Manzano~\cite[Cor.~4.2]{manzano24} has proven that there are no CMC surfaces at a bounded distance of a \emph{horizontal} geodesic if $H\leq\Hcrit$.

The \eqref{ode} has unique solutions for given initial data provided the pitch $a$ and mean curvature $H$ are fixed. For a given energy value $J$, the solution is unique up to vertical translation. Therefore, we use the following notation:

\begin{defi}
	We denote the solution curve of \eqref{ode} with pitch~$a$, mean curvature~$H$, and energy~$J$ by $\gamma_a(H,J)=(r_a(H,J),h_a(H,J))$ and the generated screw motion CMC surface by $\Sigma_a(H,J)$.  When the curve parameter is needed, we will write $\gamma_a(t;H,J)$. As an abbreviation, we often just write $\gamma(t)$ and skip the other parameters $a,H,J$. We never explicitly denote the base curvature $\kappa$ and the bundle curvature $\tau$ as fixed parameters of ambient space.
\end{defi}

The solution curves of \eqref{ode} can be classified by values of the energy $J$, where $J$ is bounded from above by some function $\Jmax=\Jmax(H)$ under the assumption of supercritical mean curvature. This yields a classification of screw motion CMC surfaces as the first author has shown in \cite[Thm.~3.1]{kaese}: Screw motion invariant surfaces in $\EKT$ of non-minimal supercritical constant mean curvature form a one-parameter family and are of one of the six types stated in Figure \ref{fig:classification}. For $J=\Jmax$ the screw motion surface $\Sigma_a(H,J)$ is a vertical cylinder. For $J\in(0,\Jmax)$ it is of unduloid type. For $J\in(-\infty,0), \kappa\leq0$ or $J\in(-\frac{4H}{\kappa},0), \kappa>0$ the screw motion surface $\Sigma_a(H,J)$ is either of nodoid type or a tube. Sphere type surfaces arise as limits for $J=0$ if $\kappa\leq0$, or for $J=0$ and $J=-\frac{4H}{\kappa}$ if $\kappa>0$. Not necessarily all surfaces types arise for every choice of parameters.

The set of all points $(H,J)$ corresponding to screw motion CMC surfaces $\Sigma_a(H,J)$ for fixed pitch $a\in\R$ form the moduli space:

\begin{defi}
	For pitch $a\in\R$ we define the \textit{moduli space} of screw motion CMC surfaces in $\MKT$ as the set of points $\lbrace(H,J)\rbrace\equiv\lbrace\Sigma_a(H,J)\rbrace$. That is
	\begin{equation}\label{eq:modulispace}
		\modspace\coloneqq\left\lbrace (H,J)\in[0,\infty)\times\R \colon \text{ there exists } \Sigma_a(H,J)\subseteq\MKT \right\rbrace.
	\end{equation}
\end{defi}

The structure of the moduli space $\modspace$ depends on the explicit choices of the parameters $\kappa$ and $\tau$ of ambient space $\MKT$ as well as the chosen pitch $a$ of the screw motion. An example of the moduli space for supercritical mean curvature is displayed in Figure~\ref{fig:modulispace}.

\begin{figure}[]
	\centering
	\tiny
	\def\svgwidth{\textwidth}\executeiffilenewer{curve_family.svg}{curve_family.pdf}{inkscape -z -D --file=curve_family.svg 	--export-pdf=curve_family.pdf --export-latex}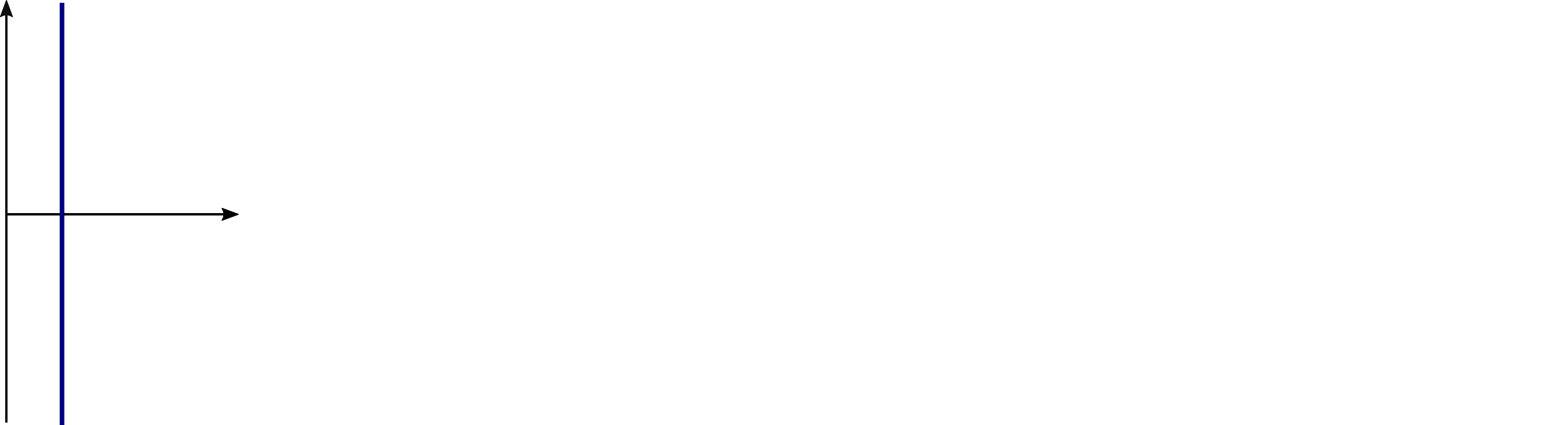
	\caption{Profile curves of the screw motion CMC surfaces in $\EKT$ according to the classification given in \cite[Thm.~3.1]{kaese}: vertical cylinder, unduloid type, sphere type, nodoid type I, tube, nodoid type II (left to right).}
	\label{fig:classification}
\end{figure}

The question of existence of screw motion CMC tubes was partially answered in \cite[Thm.~3.6 \& Cor.~3.8]{kaese}: \textit{If the pitch $a$ is admissible and the mean curvature $H$ is larger than the value
	\begin{equation}\label{eq:definition_Hexistence}
		\Hex=\sqrt{\frac{2\tau^2-a\tau\kappa}{4a\tau-2}},
	\end{equation}
then there exists an energy value $J$ such that the screw motion surface $\Sigma_a(H,J)$ is a tube. In reverse, the pitch must be admissible (or conjugate to an admissible value) if the screw motion surface is a tube. Moreover, the energy of those screw motion tubes must always lie in the interval $\left[\Jone(H),\Jtwo(H)\right]$, where
\begin{equation}\label{eq:definition_J1J2}
	\Jone(H)\coloneqq\frac{2H}{\kappa-4\tau^2}\,(2a\tau-1)
	\quad\text{and}\quad
	\Jtwo(H)\coloneqq\Jone(H)-\frac{2\tau^2-a\tau\kappa}{H(\kappa-4\tau^2)}.
\end{equation}
}

This motivates the restriction to admissible pitch throughout this paper. Moreover, given some value for the mean curvature $H$, we can always find an admissible pitch $a$ such that $H>\Hex$ and a tube with these parameters exists. The constant $\Hex$ is a technical artifact from applying the intermediate value theorem to prove existence. We show that $\Hex$ is only sharp if $2\tau^2-a\tau\kappa=0$. In the other case, there exist tubes with $H<\Hex$. We prove this statement in Corollary~\ref{cor:nonsharp_existence}. However, screw motion tubes do not exist for all $H>0$ in this case.

In $\SKR$ or $\Berger$ a screw motion with pitch $a$ around the fiber over the north pole $r=0$ is congruent to a screw motion with conjugate pitch $\tilde{a}=\frac{4\tau}{\kappa}-a$ around the fiber over the south pole $r=\frac{\pi}{\sqrt{\kappa}}$, because the groups $G_a$ and $G_{\tilde a}$ are conjugate by Lemma~\ref{lem:group_properties}. The invariant surfaces are congruent up to reflection at the equatorial cylinder $\lbrace\frac{\pi}{2\sqrt{\kappa}}\rbrace\times\left[0,2\pi\right]\times\R$ and suitable vertical translation. The energy changes from $J$ to $\tilde J=-J-\frac{4H}{\kappa}$ \cite[Lem.~2.7]{kaese}. This implies a symmetry in the moduli space for $\kappa>0$, provided we change the pitch to its conjugate value, see Figure~\ref{fig:modulispace}.

Since this paper focuses on existence and properties of tubes, we only consider the region of the moduli space that belongs to tubes and nodoid type surfaces. This motivates the following restriction to a specific subset of the moduli space:\vspace{3mm}
\begin{equation}\label{eq:definition_xia}
	\Xi\coloneqq\left\lbrace
	\begin{array}{ll}
		\big\lbrace \left(H,J\right): H>0, J\in(-\frac{4H}{\kappa},0) \big\rbrace & \text{ for } \kappa>0, \\[6mm]
		\big\lbrace \left(H,J\right): H>\frac{\sqrt{-\kappa}}{2}, J\in(-\infty,0) \big\rbrace & \text{ for } \kappa\leq0.
	\end{array}
	\right. 
\end{equation}

We divide the moduli subspace $\Xi$ into three subsets\vspace{3mm}
\begin{align}
	\Xi_a^+ &\coloneqq\left\lbrace \left(H,J\right)\in\Xi: J>\Jtwo(H)\right\rbrace, \label{eq:xi_subset1} \\[2mm]
	\Xi_a^0\, &\coloneqq\left\lbrace \left(H,J\right)\in\Xi: \Jone(H)\leq J\leq\Jtwo(H)\right\rbrace, \label{eq:xi_subset2} \\[2mm]
	\Xi_a^- &\coloneqq\left\lbrace \left(H,J\right)\in\Xi: J<\Jone(H)\right\rbrace, \label{eq:xi_subset3}
\end{align}

with $\Jone,\Jtwo$ from \eqref{eq:definition_J1J2}. Then
\begin{equation*}
	\Xi=\Xi_a^+\cup\Xi_a^0\cup\Xi_a^-
	\qquad\text{and}\qquad
	\Xi_a^+\cap\Xi_a^0\cap\Xi_a^-=\emptyset,
\end{equation*}
see Figure~\ref{fig:modulispace}. For $\kappa-4\tau^2>0$ the set $\Xi_a^+$ consists solely of nodoid type I surfaces and the set $\Xi_a^-$ consists solely of nodoid type II surfaces. For $\kappa-4\tau^2<0$ the situation is exactly the opposite. The set $\Xi_a^0$ can contain all three nodoid type I surfaces, nodoid type II surfaces, and tubes, but not necessarily all three types at once. Moreover, all tubes must be in $\Xi_a^0$ (see~\cite[Lem.~3.7]{kaese}).

\begin{figure}[]
	\centering
	\scriptsize
	\begin{subfigure}[t]{0.48\textwidth}
		\def\svgwidth{\textwidth}\executeiffilenewer{moduli_space_poscurv.svg}{moduli_space_poscurv.pdf}{inkscape -z -D --file=moduli_space_poscurv.svg 	--export-pdf=moduli_space_poscurv.pdf --export-latex}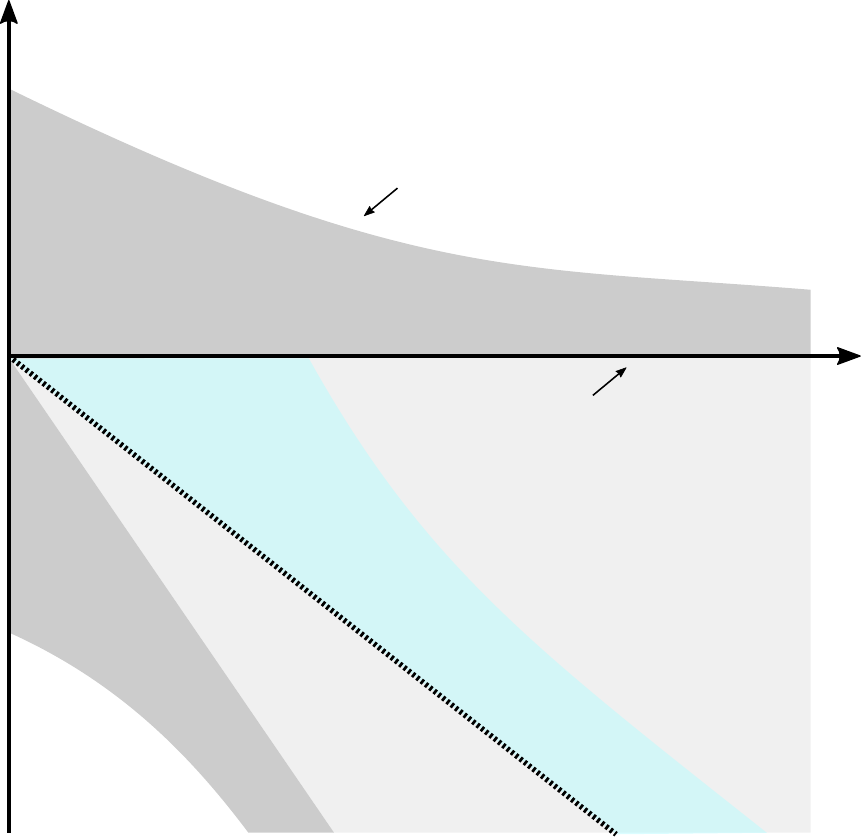
		\subcaption{Moduli space for $\kappa>0$.}
		\label{fig:modulispace_poscurv}
	\end{subfigure}
	\hfill
	\begin{subfigure}[t]{0.48\textwidth}
		\def\svgwidth{\textwidth}\executeiffilenewer{moduli_space_negcurv.svg}{moduli_space_negcurv.pdf}{inkscape -z -D --file=moduli_space_negcurv.svg 	--export-pdf=moduli_space_negcurv.pdf --export-latex}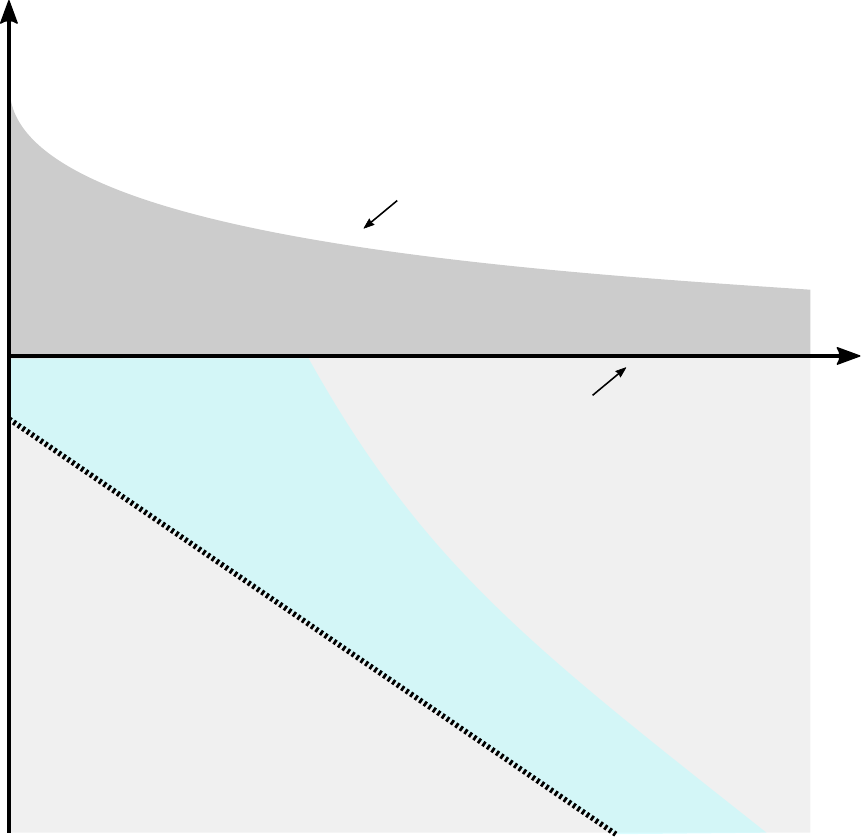
		\subcaption{Moduli space for $\kappa\leq0$.}
		\label{fig:modulispace_negcurv}
	\end{subfigure}
	\caption{Moduli space $\modspace$ for supercritical mean curvature $H>\Hcrit$, admissible pitch $a$ and $2\tau^2-a\tau\kappa\neq0$ (see Section~\ref{sec:moduli_space} for the last condition). The subsets $\Xi_a^+$, $\Xi_a^0$, and $\Xi_a^-$ are defined in \eqref{eq:xi_subset1}-\eqref{eq:xi_subset3}. The tube region $\Theta_a$ is a subset of $\Xi_a^0$ and must be in the blue shaded region of the moduli space.}
	\label{fig:modulispace}
\end{figure}

It can be shown that for every solution curve $\gamma(t)=\gamma_a(t;H,J)$ with $(H,J)\in\Xi$ the derivative of the angle function $\sigma'(t)$ is everywhere positive and bounded away from zero (see~\cite[Lem.~3.4]{kaese}). Thus, $\sigma$ attains all values in $\R$ and we can reparameterize the profile curve $\gamma(t)=(r(t),h(t))$ as $\gamma(\sigma)=(r(\sigma), h(\sigma))$ and rewrite \eqref{ode} as
\begin{equation}\label{ode_sigma}\tag{ODE$^\star$}
	\left\lbrace\begin{array}{l}
		\dfrac{dr}{d\sigma}=\dfrac{\cos\sigma}{2H-\ct(r)\sin\sigma}, \\[1.0em]
		\dfrac{dh}{d\sigma}=\dfrac{\sqrt{\sns(r)+\left(4\tau\sns(\frac{r}{2})-a\right)^2}}{2H\sn(r)-\cs(r)\sin\sigma}\,\sin\sigma.
	\end{array}\right.
\end{equation}

The radius function $r$ attains values exactly in the interval $[r_-,r_+]$, where
\begin{equation}\label{eq:radius_notation}
	r_\pm=r_\pm(H,J)=\pm\arct\!\left(2H\right)+\arcs\!\left(\frac{\kappa J+2H}{\sqrt{4H^2+\kappa}}\right).
\end{equation}

Without loss of generality we choose $r(\frac{\pi}{2};H,J)=r_+(H,J)$ and $h(\frac{\pi}{2};H,J)=0$ as initial conditions. From \eqref{eq:energy} we get an explicit formula for the radius
\begin{equation}\label{eq:radius_sigma}
	r(\sigma)=r_a(\sigma;H,J)=\arct\!\left(\frac{2H}{\sin\sigma}\right)+\arcs\!\left(\frac{\kappa J+2H}{\sqrt{4H^2+\kappa\sin^2\!\sigma}}\right)
\end{equation}
in terms of $\sigma$, with the maximal and minimal radius $r_\pm=r_\pm(H,J)$ for $\sigma=\frac{\pi}{2} (\mod 2\pi)$ and $\sigma=\frac{3\pi}{2} (\mod 2\pi)$, respectively. Note that it does not depend explicitly on the pitch~$a$. While we have an explicit formula for $r(\sigma)$, the height function $h(\sigma)$ is given by the integral
\begin{equation}\label{eq:height_sigma}
	h(\sigma)=h_a(\sigma;H,J)=\int\limits_{\frac{\pi}{2}}^{\sigma} \frac{dh}{d\sigma}\,d\sigma.
\end{equation}

It is easy to observe from \eqref{ode_sigma} that $\frac{dh}{d\sigma}(\sigma)>0$ for $\sigma\in[\frac{\pi}{2},\pi)$, $\frac{dh}{d\sigma}(\sigma)=0$ for $\sigma=\pi$, and $\frac{dh}{d\sigma}(\sigma)<0$ for $\sigma\in(\pi,\frac{3\pi}{2}]$. Therefore, $h_a(\sigma;H,J)$ reaches a local maximum at $\sigma=\pi$ and we define
\begin{equation}\label{eq:height_max}
	\hmax(H,J)\coloneqq h_a(\pi;H,J).
\end{equation}

The radius $r(\sigma)=r(\sigma;H,J)$ as well as the height derivative $\frac{dh}{d\sigma}(\sigma)=\frac{dh}{d\sigma}_a(\sigma;H,J)$ are $2\pi$-periodic functions in $\sigma$. Therefore, we can restrict our discussion to the arc for $\sigma\in[\frac{\pi}{2},\frac{5\pi}{2}]$. The profile curve of a nodoid type surface can be distinguished from the profile curve of a tube in so far as the arc $\sigma\in[\frac{\pi}{2},\frac{5\pi}{2}]$ is not closed. Because $r(\frac{5\pi}{2})=r(\frac{\pi}{2})=r_+$ this requires $h(\frac{5\pi}{2})\neq h(\frac{\pi}{2})$. Due to the symmetry under reflection at the line $h=h(\frac{\pi}{2})$ it suffices to compare $h(\frac{3\pi}{2})$ with $h(\frac{\pi}{2})$. We define a function $\delta_a$ which measures the difference $h(\frac{3\pi}{2})-h(\frac{\pi}{2})$:

\begin{defi}\label{def:closingdefect}
	For fixed $\kappa,\tau$ and $a$ admissible we define the \textit{closing defect}
	\begin{equation*}
		\delta_a\colon\Xi\to\R,~(H,J)\mapsto\int\limits_{\frac{\pi}{2}}^{\frac{3\pi}{2}} \frac{dh}{d\sigma}(\sigma;a,H,J)\,d\sigma.
	\end{equation*}
\end{defi}

The integrand is explicitly given as (see \cite{kaese})
\begin{equation}\label{eq:height_function}
	\frac{dh}{d\sigma}(\sigma;a,H,J)
	=\frac{\sqrt{f(\sigma)}}{(4H^2+\kappa\sin^2\!\sigma)\sqrt{\sin^2\!\sigma-\kappa J^2-4HJ}}\,\sin\sigma
\end{equation}
with
\begin{equation}\label{eq:height_function_f}
	f(\sigma;a,H,J)\coloneqq
	C_1\sin^4\!\sigma+C_2\sin^2\sigma+C_3
	+\left(C_4\sin^2\!\sigma+C_5\right)
	\sqrt{\sin^2\!\sigma-\kappa J^2-4HJ}\sin\sigma
\end{equation}
and $C_i=C_i(a,H,J)$ are the coefficient functions
\begin{equation*}
	\begin{aligned}
		& C_1=8\tau^2-4a\tau\kappa+a^2\kappa^2, \\
		& C_2=-32\tau^2HJ-4\tau^2\kappa J^2-16a\tau H^2+8a\tau\kappa HJ+4\kappa HJ+\kappa^2J^2+8H^2+8a^2\kappa H^2, \\
		& C_3=16\tau^2H^2J^2+32a\tau H^3J-4\kappa H^2J^2-16H^3J+16a^2H^4, \\
		& C_4=8\tau^2-4a\tau\kappa, \\
		& C_5=-16\tau^2HJ-16a\tau H^2+4\kappa HJ+8H^2.
	\end{aligned}
\end{equation*}

Then $\gamma_a(H,J)$ is a simple closed curve if and only if $\delta_a(H,J)=0$. The set
\begin{equation}
	\Theta_a\coloneqq\delta_a^{-1}(0)
	=\left\lbrace \left(H,J\right)\in\Xi_a: \Sigma_a(H,J) \text{ is a tube }\right\rbrace
	\subseteq\Xi_a^0
\end{equation}
is the subset of the moduli space containing exactly the screw motion tubes. We call it the \textit{tube region}. Nodoid type I surfaces are characterized by $\delta_a(H,J)>0$. Nodoid type II surfaces are characterized by $\delta_a(H,J)<0$, see Figure \ref{fig:curve_notation}.

The lack of an explicit formula for neither the closing defect $\delta_a$ nor the height $h$ makes the study of tubes a challenging problem. We can often provide only estimates, resulting in a non-sharp existence result and uniqueness only in specific cases.

\begin{figure}[]
	\centering
	\footnotesize
	\def\svgwidth{0.95\textwidth}\executeiffilenewer{curve_notation.svg}{curve_notation.pdf}{inkscape -z -D --file=curve_notation.svg 	--export-pdf=curve_notation.pdf --export-latex}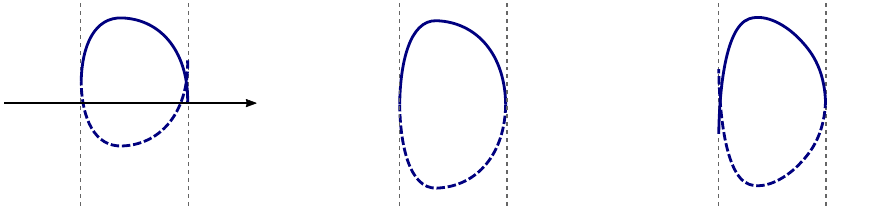
	\caption{Schematic profile curves with initial data $r(\frac{\pi}{2})=r_+, h(\frac{\pi}{2})=0$ for different energies (\textit{left to right}): nodoid type I ($\delta_a(H,J)>0$), tube ($\delta_a(H,J)=0$), and nodoid type II ($\delta_a(H,J)<0$), where $\delta_a$ is the closing defect, see Def.~\ref{def:closingdefect}.}
	\label{fig:curve_notation}
\end{figure}

To conclude this preliminary section we present the minimal screw motion surfaces with vanishing energy for $\kappa>0$, that is in the ambient spaces $\SKR$ and $\Berger$. These surfaces arise as limits of the tube family in Sections~\ref{sec:embedding} and~\ref{sec:foliation}.

\begin{lem}\label{lem:minimal_helicoid}
	Suppose $\kappa>0$. The minimal screw motion surface $\Sigma_{a,0,0}$ with vanishing energy is a spherical helicoid generated by a horizontal geodesic perpendicular to the screw motion axis.
\end{lem}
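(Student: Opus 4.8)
The plan is to read the profile curve of $\Sigma_{a,0,0}$ directly off the energy relation \eqref{eq:energy}. Setting $H=0$ and $J=0$ there gives $\sn(r)\sin\sigma=0$ along the profile curve; since $\kappa>0$ we have $\sn(r)>0$ on $I_\kappa$, so $\sin\sigma\equiv0$ and hence $\sigma$ is a constant in $\pi\Z$. Feeding this into \eqref{ode} yields $r'=\pm1$ and, because the $h$-equation carries $\sin\sigma$ as a factor, $h'\equiv0$; thus the profile curve $(r(t),h(t))$ runs at constant height while $r$ sweeps all of $I_\kappa$. Translated back to the model \eqref{model}, the solution curve $\gamma_a(0,0)$ is therefore, up to vertical translation, the radial curve $c(t)=(t,0,0)$ with $t\in I_\kappa$, so $\Sigma_{a,0,0}=G_a(c)$.

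Next I would identify $c$ geometrically. From the metric in \eqref{model} we have $g(\partial_r,\partial_z)=0$, so $c$ is horizontal and, in the limit $t\to0$, meets the axis $\{r=0\}$ orthogonally; and $c$ is the connected component through $\theta=0$ of the fixed-point set of the isometry $(r,\theta,z)\mapsto(r,-\theta,-z)$ of $(\MKT,g)$, hence totally geodesic, hence a geodesic. So $\Sigma_{a,0,0}=G_a(c)$ is a screw motion surface generated by a horizontal geodesic perpendicular to the screw motion axis; since $\kappa>0$ the base $\MK$ is a sphere and $c$ projects onto a great-circle arc through the poles, so by definition $G_a(c)$ is a spherical helicoid. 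One may equally run the argument in the reverse direction: $c$ is unit speed and orthogonal to every $G_a$-orbit, so it is a profile curve with angle function $\sigma\equiv0$; \eqref{ode} then forces $H=0$, \eqref{eq:energy} gives $J=0$, and the uniqueness of the profile ODE for prescribed $(a,H)$ and, given the energy, up to vertical translation identifies $G_a(c)$ with $\Sigma_{a,0,0}$.

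The analytic content here is the one-line observation $H=J=0\Rightarrow\sin\sigma\equiv0$; the part that will require some care is purely bookkeeping near the endpoints of $I_\kappa$. The model \eqref{model} omits the fibers over $r=0$ and $r=\frac{\pi}{\sqrt\kappa}$, so one should check that $\Sigma_{a,0,0}$ extends smoothly across them — the geodesic $c$ closes up through the two poles of $\MK$ into a complete horizontal geodesic, and in $\Berger$ the resulting helicoid is then compact — and that this extension is compatible with the abstract profile-curve formalism of \cite{kaese} at those endpoints.
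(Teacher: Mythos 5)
Your proposal is correct and follows essentially the same route as the paper: set $H=J=0$ in the energy relation \eqref{eq:energy} to force $\sin\sigma\equiv0$, read off $r'=\pm1$, $h'=0$ from \eqref{ode}, and identify the resulting radial profile curve as a horizontal geodesic perpendicular to the axis generating a spherical helicoid. The extra verifications you supply (the reflection-symmetry argument that the radial curve is a geodesic, and the remark about extending across the removed fibers at $r=0$ and $r=\pi/\sqrt{\kappa}$) are sound but go beyond what the paper's short proof records.
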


\begin{proof}
	For $\kappa>0$ and $H=0$ the energy condition $J=0$ implies $\sin\sigma=0$ and we obtain $r'=\pm1$ and $h'=0$ from~\eqref{ode}. Thus, the profile curve is a straight line perpendicular to the screw motion axis, which is a horizontal geodesic in the ambient space. It generates a spherical helicoid.
\end{proof}

Spherical helicoids have the topology of a cylinder or torus, because the generating curve is a closed geodesic. They are therefore tubes. Spherical helicoids were first introduced by Lawson in $\s^3$ \cite{lawson} and later Torralbo realized that they are also minimal in Berger spheres \cite{torralbo12}. Spherical helicoids are the only ruled minimal surfaces in $\Berger$~\cite{shin_kim_koh_lee_yang}. Moreover, they are the only surfaces that are simultaneously minimal in~$\Berger$ for any~$\kappa$ and~$\tau$~\cite[Prop.~1]{torralbo12}.

\section{Structure of the moduli space and tube region}
\label{sec:moduli_space}

We study the structure of the moduli subspace $\Xi\subseteq\modspace$ and the tube region $\Theta_a\subseteq\Xi$ for further geometric information about screw motion tubes.

In the special case $2\tau^2-a\tau\kappa=0$, that is $\SKR$ with arbitrary pitch $a\in\R$, or $\Berger$ with horizontal pitch $a=\frac{2\tau}{\kappa}$, we have $\Jone(H)=\Jtwo(H)=-\frac{2H}{\kappa}$, see \eqref{eq:definition_J1J2}. Then\vspace{3mm}
\begin{equation*}
	\Xi_a^0=\left\lbrace(H,-\tfrac{2H}{\kappa})\colon H>0\right\rbrace
\end{equation*}

is just a straight line in the moduli space. Moreover, \cite[Thm.~3.6]{kaese} provides the existence of tubes for all $H>0$, because $\Hex=0$. Therefore $\Theta_a=\Xi_a^0$, and we can define the \textit{tube family}
\begin{equation}\label{eq:def_tubefamily_hor}
	\Tfamily=\left\lbrace T_a(H)=\Sigma_a(H,J^\mathrm{tube}(H)) \colon H\in(0,\infty) \right\rbrace
\end{equation}

with the \textit{tube energy} $J^\mathrm{tube}(H)=-\frac{2H}{\kappa}$.

In the general case $2\tau^2-a\tau\kappa\neq0$, that is $\PSLzwei$, Heisenberg space $\Nildrei$, or $\Berger$ with non-horizontal pitch, the study of $\Theta_a$ demands more work, as there is no explicit formula for the closing defect $\delta_a$ at hand and we have to deal with the indeterminacy of the energy of the screw motion tubes. The aim is to define a continuous 1-parameter family of screw motion tubes (see Definition~\ref{def:tubefamily}).

\subsection{Structure of the tube region $\Theta_a$}\
\label{sec:moduli_space_separationproperty}

We gather some properties of the tube region $\Theta_a$. While we are interested in the case $2\tau^2-a\tau\kappa\neq0$, the following statement is also true for $2\tau^2-a\tau\kappa=0$.

\begin{lem}\label{lem:separationproperty}
	Suppose $a$ is admissible. Then the tube region $\Theta_a\subseteq\Xi$ satisfies: \vspace{-2mm}
	\begin{enumerate}[leftmargin=10mm]\setlength{\itemsep}{2mm}
		\item It is a non-empty real analytic set.
		\item It is a closed subset of $\Xi$ with empty interior.
		\item It separates $\Xi_a^+$ from $\Xi_a^-$. That is, the set $\Xi\backslash\Theta_a$ is not path-connected and $\Xi_a^+$ and $\Xi_a^-$ are contained in different path-connected components.
		\item It is a $(\mod 2)$ Euler space of dimension $1$. That is the underlying set of a locally finite simplicial complex, such that the link of each simplex has even Euler characteristic. 
	\end{enumerate}
\end{lem}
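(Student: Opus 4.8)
The plan is to exploit the explicit analytic structure of the closing defect $\delta_a$ together with the fact that it is (up to an analytic nonvanishing factor) given by an integral of a semialgebraic integrand, and then to combine this with a monotonicity statement in one of the two moduli-space directions. First I would establish (1): the integrand $\frac{dh}{d\sigma}$ in \eqref{eq:height_function}--\eqref{eq:height_function_f} is real analytic in $(H,J)$ on all of $\Xi$ (the radicands $\sin^2\sigma-\kappa J^2-4HJ$ and $f(\sigma;a,H,J)$ stay positive on the open set $\Xi$ by the classification, so no branch points are hit), and integration over the fixed compact interval $[\frac\pi2,\frac{3\pi}2]$ preserves real analyticity by differentiating under the integral sign. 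Hence $\delta_a$ is real analytic on $\Xi$, so $\Theta_a=\delta_a^{-1}(0)$ is a real analytic set; non-emptiness is exactly the existence result \cite[Thm.~3.6]{kaese} for admissible $a$ (recalled just before Def.~\ref{def:admissible}), which produces a point of $\Theta_a$ for every $H>\Hex$.

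For (2): closedness of $\Theta_a$ in $\Xi$ is immediate from continuity of $\delta_a$. For the empty-interior claim I would argue that $\delta_a$ is not locally constant equal to $0$: fixing $H$ large and letting $J\to\Jtwo(H)^-$ (resp.\ the relevant endpoint) one shows $\delta_a>0$ on $\Xi_a^+$-side points near the tube region and $\delta_a<0$ on $\Xi_a^-$-side points, using that for $J>\Jtwo$ the surface is of one nodoid type and for $J<\Jone$ of the other (the characterization recalled after \eqref{eq:height_function_f}, from \cite[Lem.~3.7]{kaese}); since $\delta_a$ takes both signs arbitrarily close to any point of $\Theta_a$, the zero set can contain no open subset of $\Xi$. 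In fact this same sign-change observation is the heart of (3): for fixed $H$ (with $H$ large enough that the vertical slice $\{H\}\times\R$ meets $\Theta_a$), $\delta_a(H,\cdot)$ is a continuous real-valued function that is positive on $\Xi_a^+\cap(\{H\}\times\R)$, negative on $\Xi_a^-\cap(\{H\}\times\R)$, and these two sets are the two components of the slice minus the tube region; hence any path in $\Xi\setminus\Theta_a$ from $\Xi_a^+$ to $\Xi_a^-$ must cross a level set where $\delta_a$ changes sign, i.e.\ must cross $\Theta_a$, a contradiction. (One should also rule out going ``around'' through small $H$; here I would use that for $H$ sufficiently small $\Xi_a^0$ is empty or that the sign of $\delta_a$ is constant there, so $\Xi_a^+$ and $\Xi_a^-$ genuinely sit on opposite sides of the single analytic curve $\Theta_a$.)

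For (4): a real analytic subset of an open subset of $\R^2$ admits a locally finite semianalytic (hence simplicial) stratification, by the classical results of \L{}ojasiewicz; so $\Theta_a$ is the underlying set of a locally finite simplicial complex. Being contained in $\R^2$, it has dimension $\le 2$; by (2) it has empty interior, so $\dim\Theta_a\le 1$, and by (3) it locally separates the plane into at least two pieces, which forces $\dim\Theta_a=1$. The $(\bmod\,2)$ Euler-space condition — every simplex link has even Euler characteristic — follows because $\Theta_a$ locally separates $\R^2$: a $1$-dimensional semianalytic set in the plane that locally separates has, at each point, an even number of local branches (an odd number would leave the complement locally connected near that point), so the link of each $0$-simplex is an even number of points (Euler characteristic even) and the link of each $1$-simplex is empty or two points; alternatively one invokes the general fact that the boundary $\partial A$ of any semianalytic set $A$ (here $A$ the closure of one of the components from (3)) is a $(\bmod\,2)$ Euler space of one lower dimension.

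The main obstacle I expect is (3), more precisely the global separation rather than the local one: showing that $\delta_a$ has opposite signs on $\Xi_a^+$ and $\Xi_a^-$ \emph{and} that one cannot evade $\Theta_a$ by detouring through regions of small $H$ (or, for $\kappa>0$, near the $J$-boundaries of $\Xi$) requires a careful sign analysis of $\delta_a$ near the boundary of $\Xi$ and a connectedness argument for $\Xi_a^\pm$ themselves. The analyticity in (1) and the \L{}ojasiewicz triangulation in (4) are standard once (1)--(3) are in place; the real content is the boundary-behaviour/monotonicity input that pins $\Theta_a$ down as a genuine separating analytic curve.
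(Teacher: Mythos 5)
Your proposal follows essentially the same route as the paper: real analyticity of $\delta_a$ (the paper gets it from Cauchy--Kowalewsky plus closure properties, you from the explicit integral formula --- both work), the sign dichotomy of $\delta_a$ on $\Xi_a^+$ and $\Xi_a^-$ from \cite[Lem.~3.7]{kaese} combined with the intermediate value theorem for separation, and the general structure theory of real analytic sets for (4). Three of your stated justifications are shaky, although in each case the standard fix is nearby. For (2), the claim that $\delta_a$ takes both signs arbitrarily close to \emph{every} point of $\Theta_a$ is neither needed nor justified: a zero could a priori lie deep inside $\Xi_a^0$ with $\delta_a\geq 0$ on a whole neighborhood; the paper instead restricts $\delta_a$ to one-variable analytic slices $\lbrace H=H_0\rbrace$ and $\lbrace J=J_0\rbrace$, which are not identically zero because they meet $\Xi_a^\pm$, so their zeros are isolated and $\Theta_a$ has empty interior. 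For (3), your worry about ``detouring through small $H$'' is moot once you use the \emph{global} sign statement you already cite: $\delta_a$ is positive on all of $\Xi_a^+$ and negative on all of $\Xi_a^-$, so composing $\delta_a$ with any path joining the two regions forces a zero by the intermediate value theorem, regardless of where the path wanders --- no slice analysis or boundary behaviour is needed. For (4), the even-link condition cannot be derived from separation: global separation does not imply that $\Theta_a$ locally separates at every one of its points, and ruling out a branch with a free endpoint is exactly the content of Sullivan's theorem that real analytic sets are $(\operatorname{mod} 2)$ Euler spaces (the paper cites \cite[Thm.~4.4]{fu_mccrory}); your ``alternatively'' clause should be the primary argument, with the dimension-$1$ statement then following from empty interior plus the fact that a discrete set cannot separate an open connected subset of $\R^2$.
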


\begin{proof}
	(\textit{i}) The set $\Theta_a$ is non-empty by \cite[Thm.~3.6]{kaese}. The curve $\gamma_a(\sigma;H,J)$ is real analytic, because it is a solution of a real analytic ODE with real analytic initial conditions (Theorem of Cauchy-Kowalewsky). Since sums, products, quotients, compositions, derivatives and integrals of real analytic functions are again real analytic if defined (see e.g. \cite{krantz_parks}), $\delta_a$ is a real analytic function. Then the preimage $\delta_a^{-1}(0)$ is a real analytic set.
	
	(\textit{ii}) Fix $H=H_0$ or $J=J_0$ and consider the function $J\mapsto\delta_a(H_0,J)$ or $H\mapsto\delta_a(H,J_0)$ respectively. The common zero set of these functions is the set $\Theta_a$. These functions do not vanish identically, because $\delta_a(H,J)>0$ for all $(H,J)\in\Xi_a^+$ and $\delta_a(H,J)<0$ for all $(H,J)\in\Xi_a^-$, or vice versa, see \cite[Lem.~3.7]{kaese}. Because both functions are real analytic, the zeros must be isolated and the sets $\Theta_a\cap\lbrace H=H_0\rbrace$ and $\Theta_a\cap\lbrace J=J_0\rbrace$ are discrete. This implies that the interior of $\Theta_a$ must be empty. However, this does not imply that $\Theta_a$ is discrete. As the preimage of a closed set, $\Theta_a$ is a closed subset of $\Xi$.
	
	(\textit{iii}) Suppose there exists a continuous path connecting $\Xi_a^+$ and $\Xi_a^-$ in the complement~$\Xi\backslash\Theta_a$. This path connects points $(H_1,J_1)$ and $(H_2,J_2)$ in the moduli space with \mbox{$\delta_a(H_1,J_1)>0$} and $\delta_a(H_2,J_2)<0$. By the intermediate value theorem the path must pass through a point $(H_3,J_3)$ such that $\delta_a(H_3,J_3)=0$. This point would lie in $\Theta_a$ and we derive a contradiction.
	
	(\textit{iv}) It is a direct consequence of (\textit{i}) and \cite[Thm.~4.4]{fu_mccrory} that $\Theta_a$ is a $(\mod 2)$ Euler space (see also \cite{sullivan,burghelea_verona,hardt}). Because $\Theta_a$ has empty interior and separates $\Xi_a^+$ and $\Xi_a^-$, the simplicial complex has dimension 1.
\end{proof}

\subsection{Structure of the boundary of the tube region $\Theta_a$}\
\label{sec:moduli_space_boundary}

The tube region $\Theta_a$ separates $\Xi_a^+$ and $\Xi_a^-$. Roughly speaking this means that $\Theta_a$ must reach the boundary of $\Xi$ at some point. Otherwise we could just find a path in the complement $\Xi\backslash\Theta_a$ connecting  $\Xi_a^+$ and $\Xi_a^-$. The study of this behavior provides further insights in the structure of $\Theta_a$. For that purpose, let $\overline{\Xi}$ be the closure of $\Xi$ in $\modspace$ and $\partial\Xi=\overline{\Xi}\backslash\Xi$ its boundary. Then
\begin{equation}\label{eq:xi_closure}
	\overline{\Xi}=\left\lbrace
	\begin{array}{ll}
		\vphantom{\int\limits_0^0}\big\lbrace \left(H,J\right): H\geq0, J\in[-\frac{4H}{\kappa},0] \big\rbrace & \text{ for } \kappa>0, \\[6mm]
		\vphantom{\int\limits_0^0}\big\lbrace \left(H,J\right): H\geq\Hcrit, J\in(-\infty,0] \big\rbrace & \text{ for } \kappa\leq0.
	\end{array}
	\right. 
\end{equation}

The boundary $\partial\Xi$ is the union of two lines $\L_1$ and $\L_2$, see Figure~\ref{fig:modulispace}. For $\kappa>0$ these are the two lines
\begin{equation*}
	\L_1=\big\lbrace\!\left(H,0\right)\colon H\in[\Hcrit,\infty)\big\rbrace
	\qquad\text{and}\qquad
	\L_2=\left\lbrace\left(H,-\frac{4H}{\kappa}\right)\colon H\in[0,\infty)\right\rbrace,
\end{equation*}
and for $\kappa\leq0$ these are the two lines
\begin{equation*}
	\L_1=\big\lbrace\!\left(H,0\right)\colon H\in[\Hcrit,\infty)\big\rbrace
	\qquad\text{and}\qquad
	\L_2=\big\lbrace\!\left(\Hcrit,J\right)\colon J\in(-\infty,0]\big\rbrace.
\end{equation*}

Furthermore, let $\overline{\Theta}_a$ be the closure of the tube region $\Theta_a$ in $\overline{\Xi}$ and $\partial\Theta_a=\overline{\Theta}_a\backslash\Theta_a$ its boundary. Then $\partial\Theta_a\subseteq\partial\Xi$, because $\Theta_a$ is closed in $\Xi$ (see Lemma~\ref{lem:separationproperty}). Moreover, $\partial\Theta_a\subseteq\L_1$, because it is bounded away from $\L_2$:

\begin{lem}\label{lem:boundary_set}
	Suppose $a$ is admissible. Then $\partial\Theta_a\subseteq\L_1$. In particular, every point $(H,J)\in\partial\Theta_a$ has $J=0$.
\end{lem}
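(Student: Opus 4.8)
The plan is to combine the inclusion $\partial\Theta_a\subseteq\partial\Xi=\L_1\cup\L_2$ obtained above with the claim that $\overline{\Theta}_a\cap\L_2\subseteq\L_1$. Since $\L_1=\{(H,0)\colon H\geq\Hcrit\}$ in every case, this yields $\partial\Theta_a\subseteq\L_1$, hence also the ``in particular'' assertion. Everything thus reduces to keeping $\overline{\Theta}_a$ away from $\L_2$ except possibly along $\L_1$; and since all tubes lie in $\Xi_a^0$ by \cite[Lem.~3.7]{kaese}, for this it suffices to treat $\Xi_a^0$ in place of $\Theta_a$.

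\emph{The case $\kappa>0$.} Here $\L_2=\{(H,-\tfrac{4H}{\kappa})\colon H\geq0\}$ and $\Xi_a^0\subseteq\{(H,J)\in\Xi\colon J\geq\Jone(H)\}$. Using \eqref{eq:definition_J1J2} one computes
\[
\Jone(H)+\frac{4H}{\kappa}=\frac{2H\,\bigl(2a\tau\kappa+\kappa-8\tau^2\bigr)}{\kappa\,(\kappa-4\tau^2)},
\]
and admissibility of $a$ (Definition~\ref{def:admissible}) forces $2a\tau\kappa+\kappa-8\tau^2$ to have the same sign as $\kappa-4\tau^2$ and absolute value at least $\abs{\kappa-4\tau^2}$; hence $\Jone(H)\geq-\tfrac{2H}{\kappa}$ for all $H>0$. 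Therefore $\Xi_a^0$, and a fortiori $\Theta_a$, stays at positive distance from $\L_2$ away from $H=0$, so $\overline{\Theta}_a$ can meet $\L_2$ only at the origin $(0,0)\in\L_1$. The same computation covers the degenerate case $2\tau^2-a\tau\kappa=0$, where $\Theta_a$ is the explicit half-line $J=-\tfrac{2H}{\kappa}$.

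\emph{The case $\kappa\leq0$.} Now $\L_2=\{(\Hcrit,J)\colon J\leq0\}$, and I would instead show that the closing defect blows up along it: $\delta_a(H,J)\to+\infty$ as $H\to\Hcrit^{+}$, uniformly for $J$ in compact subsets of $(-\infty,0)$. Since every tube has $J<0$, this forces $\overline{\Theta}_a\cap\L_2\subseteq\{(\Hcrit,0)\}\subseteq\L_1$. The mechanism is the factor $4H^2+\kappa\sin^2\sigma=(4H^2+\kappa)+\abs{\kappa}\cos^2\sigma$ in the denominator of the integrand \eqref{eq:height_function}: at $H=\Hcrit$ (so $4H^2+\kappa=0$) it vanishes to second order precisely at the endpoints $\sigma=\tfrac{\pi}{2},\tfrac{3\pi}{2}$ of the integration interval. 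For $H$ slightly larger the integral is finite, and localizing at those points (where $\sin^2\sigma=1$) one obtains
\[
\delta_a(H,J)=\frac{\pi/2}{\sqrt{\abs{\kappa}\,(4H^2+\kappa)}\;\sqrt{1-\kappa J^2-4HJ}}\,\Bigl(\sqrt{f(\tfrac{\pi}{2})}-\sqrt{f(\tfrac{3\pi}{2})}\Bigr)+O(1),
\]
the difference of square roots arising from $\sin\tfrac{\pi}{2}=1$, $\sin\tfrac{3\pi}{2}=-1$, the remainder staying bounded as $H\to\Hcrit^{+}$. By \eqref{eq:height_function_f} we have $f(\tfrac{\pi}{2})-f(\tfrac{3\pi}{2})=2(C_4+C_5)\sqrt{1-\kappa J^2-4HJ}$, and evaluating the coefficients at $H=\Hcrit$ gives $C_4+C_5=2(4\tau^2-\kappa)\bigl(1-\sqrt{-\kappa}\,J\bigr)>0$; hence $f(\tfrac{\pi}{2})>f(\tfrac{3\pi}{2})\geq0$, so the displayed singular term is positive of order $(4H^2+\kappa)^{-1/2}$ and $\delta_a\to+\infty$. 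For $\kappa=0$ ($\Hcrit=0$, and $4H^2$ no longer $\sigma$-dependent) the mechanism differs: as $H\to0^{+}$ one has $\sqrt{f(\sigma)}\to4\tau\sin^2\sigma$ locally uniformly on $(\tfrac{\pi}{2},\pi)$ and $\sqrt{f(\sigma)}=O(H)$ on $(\pi,\tfrac{3\pi}{2})$, so $\delta_a(H,J)\to+\infty$ with leading term $\tfrac{\tau\pi}{4H^2}$ (here $\tau\neq0$, since $\kappa\neq4\tau^2$). In all cases the bounds are uniform in $J$ on compacta.

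The $\kappa>0$ half is routine sign bookkeeping. I expect the $\kappa\leq0$ asymptotics to be the main obstacle: one must justify the leading-order behaviour of the integrand \eqref{eq:height_function} near $\sigma=\tfrac{\pi}{2},\tfrac{3\pi}{2}$ and, crucially, check that the two divergent boundary contributions do \emph{not} cancel — which is exactly the inequality $C_4+C_5>0$ along $\L_2$ — and then upgrade the pointwise blow-up of $\delta_a$ to one that is uniform in the energy parameter.
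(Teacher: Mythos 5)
Your proposal is correct, and it splits naturally into a half that coincides with the paper and a half that is genuinely different. For $\kappa>0$ (and the paper's $\kappa=0$ subcase) the paper argues exactly as you do: $\Theta_a\subseteq\Xi_a^0$ and admissibility gives $\Jone(H)\geq-\tfrac{2H}{\kappa}$ (resp.\ $\Jone(H)\to0$ when $\kappa=0$), so $\overline{\Theta}_a$ can meet $\L_2$ only at the origin; your sign bookkeeping with $2a\tau\kappa+\kappa-8\tau^2$ checks out against Definition~\ref{def:admissible}. For $\kappa<0$ the paper takes a soft route: it fixes initial data at $\sigma=\tfrac{3\pi}{2}$, notes that the solutions $\gamma_a(H_n,J_n)$ converge to the critical-curvature solution $\gamma_a(\Hcrit,J_0)$, which by the classification in \cite{kaese_phd} is of folium type with $h(\sigma)\to-\infty$ as $\sigma\to\tfrac{\pi}{2}$, contradicting $h(\tfrac{\pi}{2};H_n,J_n)=0$. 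You instead prove the quantitative statement that $\delta_a(H,J)\to+\infty$ as $H\to\Hcrit^{+}$, locally uniformly in $J<0$, by extracting the $(4H^2+\kappa)^{-1/2}$ endpoint singularities of the integrand \eqref{eq:height_function} and checking via \eqref{eq:height_function_f} that they do not cancel; your key identity $C_4+C_5=2(4\tau^2-\kappa)(1-\sqrt{-\kappa}\,J)>0$ at $H=\Hcrit$ is correct, as is the $\kappa=0$ expansion with leading term $\tfrac{\tau\pi}{4H^2}$. Your approach buys independence from the folium classification (it uses only formulas already in this paper) and yields the stronger byproduct that $\delta_a>0$, hence only one nodoid type occurs, in a whole neighborhood of $\L_2\setminus\L_1$; the cost is the singular-integral asymptotics, which you only sketch — in particular the remainder after subtracting the endpoint contributions is $O\bigl(\log\tfrac{1}{4H^2+\kappa}\bigr)$ rather than $O(1)$ (the first Taylor correction of the regular factor integrates against $\tfrac{u\,du}{\epsilon+\abs{\kappa}u^2}$), though this is still dominated by the $(4H^2+\kappa)^{-1/2}$ term and does not affect the conclusion. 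The paper's argument is shorter but leans on \cite{kaese_phd}; yours is self-contained but requires carrying out the uniform Laplace-type estimate you correctly identify as the main remaining work.
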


\begin{proof}
	Let $(H_0,J_0)\in\partial\Theta_a$ be a point in the boundary of the tube region. Then there must be a sequence $(H_n,J_n)_{n\in\N}$ in $\Theta_a$ converging to $(H_0,J_0)$. If we assume $(H_0,J_0)\not\in\L_1$, we can derive a contradiction. This is precisely what we do in the following, but separately for the three cases $\kappa>0$, $\kappa=0$, and $\kappa<0$. We note beforehand that the lines $\L_1$ and $\L_2$ intersect in the point $(0,0)$ if $\kappa\geq0$, and are disjoint if $\kappa<0$.
	
	\underline{Case $\kappa>0$:} Assume $(H_0,J_0)\in\L_2\backslash\lbrace(0,0)\rbrace$. Then $J_0=-\frac{4H_0}{\kappa}$. Because $\Theta_a$ is a subset of~$\Xi_a^0$, every point $(H_n,J_n)$ lies in $\Xi_a^0$ and therefore satisfies
	\begin{equation*}
		J_n\geq\Jone(H_n) =\frac{2H_n}{\kappa-4\tau^2}\,(2a\tau-1)\geq-\frac{2H_n}{\kappa}
	\end{equation*}
	for $a$ admissible. Then for $H_0>0$ we have
	\begin{equation*}
		\lim\limits_{n\to\infty} J_n \geq\lim\limits_{n\to\infty}\left(-\frac{2H_n}{\kappa}\right) =-\frac{2H_0}{\kappa}>-\frac{4H_0}{\kappa} =J_0,
	\end{equation*}
	i.e., $J_n$ does not converge to $J_0$, which is a contradiction.
	
	\underline{Case $\kappa=0$:} Assume $(H_0,J_0)\in\L_2\backslash\lbrace(0,0)\rbrace$. Then $H_0=\Hcrit=0$. Because $\Theta_a$ is a subset of~$\Xi_a^0$, every point $(H_n,J_n)$ lies in $\Xi_a^0$ and therefore satisfies
	\begin{equation*}
		J_n\geq\Jone(H_n) =\frac{H_n}{2\tau^2}\,(1-2a\tau)
	\end{equation*}
	and we have
	\begin{equation*}
		\lim\limits_{n\to\infty} J_n \geq\lim\limits_{n\to\infty}\left(\frac{H_n}{2\tau^2}\,(1-2a\tau)\right) =\frac{H_0}{2\tau^2}\,(1-2a\tau) =0>J_0,
	\end{equation*}
	i.e., $J_n$ does not converge to $J_0$, which is a contradiction.
	
	\underline{Case $\kappa<0$:} In the previous two cases our proof is based on the fact that $\Xi_a^0$ is bounded away from $\L_2$. But this is no longer true in case $\kappa<0$, see e.g. Figure~\ref{fig:modulispace_negcurv}. Thus, we have to give a different argument.
	
	Assume $(H_0,J_0)\in\L_2$. Then $H_0=\Hcrit$ and the profile curve $\gamma_a(H_0,J_0)$ with initial data $r(\frac{3\pi}{2};H_0,J_0)=r_-(H_0,J_0)$ and $h(\frac{3\pi}{2};H_0,J_0)=0$ generates a folium type surface by the classification theorem for screw motion surfaces of critical mean curvature (see~\cite{penafiel12} for a discussion and some examples and~\cite[Thm.~4.10]{kaese_phd} for the full classification). Furthermore, the profile curve $\gamma_a(H_n,J_n)$ with initial data $r(\frac{3\pi}{2};H_n,J_n)=r_-(H_n,J_n)$ and $h(\frac{3\pi}{2};H_n,J_n)=0$ generates a tube for all $n\in\N$. Then
	\begin{equation*}
		\lim\limits_{n\to\infty}r(\tfrac{3\pi}{2};H_n,J_n)
		=\lim\limits_{n\to\infty}r_-(H_n,J_n)
		=r_-(H_0,J_0)
	\end{equation*}
	and
	\begin{equation*}
		\lim\limits_{n\to\infty}h(\tfrac{3\pi}{2};H_n,J_n)
		=0,
	\end{equation*}
	i.e., the initial data of $\gamma_a(H_n,J_n)$ converge to the initial data of $\gamma_a(H_0,J_0)$. Since all curves are solution of the same analytic ODE, the curve $\gamma_a(H_n,J_n)$ must converge to the curve $\gamma_a(H_0,J_0)$. But
	\begin{equation*}
		\lim\limits_{n\to\infty}h(\tfrac{\pi}{2};H_n,J_n)=0,
	\end{equation*}
	because $h(\frac{\pi}{2};H_n,J_n)=0$ for all $n\in\N$, while the profile curve of a folium type surface has $h(\sigma;H_0,J_0)\to-\infty$ as $\sigma\to\frac{\pi}{2}$ (see~\cite[Prop.~4.14]{kaese_phd}), and we derive a contradiction.

	Altogether, $\partial\Theta_a\subseteq(\L_1\cup\L_2)$, but $\partial\Theta_a\cap(\L_2\backslash\L_1)=\emptyset$, and therefore $\partial\Theta_a\subseteq\L_1$.
\end{proof}

The existence of a sequence in $\Theta_a$ converging to the boundary $\partial\Theta_a$ provides a description of the boundary $\partial\Theta_a$ in terms of an integral equation:

\begin{lem}\label{lem:intersectionlimit}
	Suppose $a$ is admissible and $2\tau^2-a\tau\kappa\neq0$. Let $L_a$ denote the set of points $(H,0)\in\partial\Xi$, where $H$ is a solution of the integral equation
	\begin{equation}\label{eq:intersectionlimit}
		\int\limits_{\frac{\pi}{2}}^\pi p_a(\sigma;H)\,d\sigma 
		=\frac{\pi}{2}\abs{a}
	\end{equation}
	where
	\begin{equation*}
		p_a(\sigma;H)\coloneqq\frac{\left((4\tau-a\kappa)^2\sin^4\!\sigma+8H^2(2-4a\tau+a^2\kappa)\sin^2\!\sigma+16a^2H^4\right)^{1/2}}{4H^2+\kappa\sin^2\!\sigma}.
	\end{equation*}
	
	Then $\partial\Theta_a$ is a non-empty subset of $L_a$. Moreover, $\partial\Theta_a$ is the non-empty union of finitely many points $(H_i,0)$ with $H_i\in(\Hcrit,\Hex)$. In particular, $(0,0)\not\in\partial\Theta_a$.
\end{lem}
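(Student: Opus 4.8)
The plan is to characterize the boundary $\partial\Theta_a$ by taking limits of tubes along sequences approaching $\L_1$, and to identify the limiting object precisely. By Lemma~\ref{lem:boundary_set} we already know $\partial\Theta_a\subseteq\L_1$, so any point of $\partial\Theta_a$ has the form $(H_0,0)$ and there is a sequence $(H_n,J_n)\in\Theta_a$ with $J_n\to0$ and $H_n\to H_0$. The key step is to understand what the profile curves $\gamma_a(H_n,J_n)$ converge to. At energy $J=0$ the solution of \eqref{ode} is the \emph{sphere type} surface (by the classification in \cite[Thm.~3.1]{kaese}), whose profile curve is a closed arc that touches the axis $r=0$. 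Concretely, from \eqref{eq:radius_sigma} the minimal radius $r_-(H,J)\to0$ as $J\to0$ while $r_+(H,0)=2\arct(2H)$ stays positive, so in the limit the profile curve becomes an arc running from $r=0$ out to $r_+$ and back. For this limiting object to be reached as a limit of \emph{tubes} (closed curves), the closing defect must vanish in the limit; I would analyze $\lim_{n\to\infty}\delta_a(H_n,J_n)$ by splitting the integral in Definition~\ref{def:closingdefect} and taking $J\to0$ inside \eqref{eq:height_function}--\eqref{eq:height_function_f}.

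The main computation is the following: set $J=0$ in \eqref{eq:height_function}. The factor $\sqrt{\sin^2\!\sigma-\kappa J^2-4HJ}$ becomes $\abs{\sin\sigma}$, the coefficients $C_i(a,H,0)$ simplify considerably ($C_3=0$, and $f(\sigma;a,H,0)$ becomes $\sin^2\!\sigma$ times a polynomial in $\sin^2\!\sigma$ plus the radical term which is now $(C_4\sin^2\!\sigma+C_5)\abs{\sin\sigma}\sin\sigma$), and after factoring $\sin^2\!\sigma$ out of $f$ one is left with
\[
\frac{dh}{d\sigma}(\sigma;a,H,0)=\frac{\sqrt{\,(4\tau-a\kappa)^2\sin^4\!\sigma+8H^2(2-4a\tau+a^2\kappa)\sin^2\!\sigma+16a^2H^4\,}}{4H^2+\kappa\sin^2\!\sigma}\cdot\sgn(\sin\sigma),
\]
i.e.\ precisely $p_a(\sigma;H)\,\sgn(\sin\sigma)$. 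Hence $\delta_a(H,0)=\int_{\pi/2}^{3\pi/2}p_a(\sigma;H)\sgn(\sin\sigma)\,d\sigma$; using that $p_a$ is symmetric about $\sigma=\pi$ and changes sign there, this equals $2\int_{\pi/2}^{\pi}p_a(\sigma;H)\,d\sigma$ minus the contribution that in the limit accounts for the ``missing'' piece of the arc near $r=0$. A careful bookkeeping of the sphere-type profile curve (the arc closes up geometrically only after accounting for a jump of height $\tfrac{\pi}{2}\abs a$ coming from the behavior of $h$ near $r_-\to0$, which one reads off from the $a$-dependence in \eqref{ode} — the term $h'=\sqrt{\cdots}\sin\sigma$ with the $(4\tau\sns(\tfrac r2)-a)^2$ piece dominating as $r\to0$) yields exactly equation~\eqref{eq:intersectionlimit}: $\int_{\pi/2}^{\pi}p_a(\sigma;H)\,d\sigma=\tfrac{\pi}{2}\abs a$. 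So $\partial\Theta_a\subseteq L_a$.

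For the remaining assertions: non-emptiness of $\partial\Theta_a$ follows from Lemma~\ref{lem:separationproperty}(iii) — since $\Theta_a$ separates $\Xi_a^+$ from $\Xi_a^-$ inside $\Xi$, it cannot be compact away from $\partial\Xi$, hence it accumulates on $\partial\Xi$, and by Lemma~\ref{lem:boundary_set} on $\L_1$. Finiteness of $\partial\Theta_a$ follows from real analyticity: the function $H\mapsto\delta_a(H,0)$ (equivalently $H\mapsto\int_{\pi/2}^\pi p_a(\sigma;H)\,d\sigma-\tfrac\pi2\abs a$) is real analytic in $H$ on $(\Hcrit,\infty)$ by the same argument as in Lemma~\ref{lem:separationproperty}(i), it is not identically zero (for $H$ large the integral is dominated by the $16a^2H^4$ term under the root divided by $4H^2$, giving $\sim\abs a\int_{\pi/2}^\pi d\sigma=\tfrac\pi2\abs a$ in the leading order but with a nonvanishing correction once $2\tau^2-a\tau\kappa\neq0$), so its zeros are isolated, and they lie in a \emph{bounded} interval — here one uses $\Hex$: by the existence result \cite[Thm.~3.6]{kaese} tubes exist for all $H>\Hex$, so $\Theta_a$ contains a curve extending to $H=+\infty$ that never re-meets $\L_1$ beyond $H=\Hex$; more precisely the monotonicity/sign analysis of $\delta_a(H,0)$ shows $\delta_a(H,0)$ has a fixed sign for $H\geq\Hex$, confining the zeros to $(\Hcrit,\Hex)$. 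Thus $\partial\Theta_a$ is a finite union of points $(H_i,0)$ with $H_i\in(\Hcrit,\Hex)$, and in particular $(0,0)\notin\partial\Theta_a$ since $0\leq\Hcrit<H_i$. The part I expect to be most delicate is the precise identification of the constant $\tfrac{\pi}{2}\abs a$ in the limit — i.e.\ correctly tracking the contribution to the closing defect coming from the degenerating end of the profile curve as $r_-\to0$, since the integrand $\tfrac{dh}{d\sigma}$ develops a singularity there and the naive limit of $\delta_a$ would omit exactly this term; handling this requires a uniform estimate near $\sigma=3\pi/2$ (equivalently near $r=r_-$) as $J\to0$.
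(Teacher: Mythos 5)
Your overall strategy (pass to the limit $J\to0$ along a sequence of tubes converging to a point of $\L_1$, interchange limit and integral in $\delta_a$, then use analyticity and the sign of the integrand for $H\geq\Hex$ to confine and finitize the zeros) is the same as the paper's. However, the central computation is wrong, and the error falls exactly on the term that produces the right-hand side $\tfrac{\pi}{2}\abs{a}$ of \eqref{eq:intersectionlimit}. Setting $J=0$ in \eqref{eq:height_function_f}, the radical term becomes $(C_4\sin^2\!\sigma+C_5)\abs{\sin\sigma}\sin\sigma=\pm(C_4\sin^2\!\sigma+C_5)\sin^2\!\sigma$, so $f(\,\cdot\,;a,H,0)$ is a \emph{different} polynomial on the two half-arcs: for $\sin\sigma>0$ it is $(C_1+C_4)\sin^4\!\sigma+(C_2+C_5)\sin^2\!\sigma+C_3$, which is the square of the numerator of $p_a$, but for $\sin\sigma<0$ it is $(C_1-C_4)\sin^4\!\sigma+(C_2-C_5)\sin^2\!\sigma+C_3=a^2\left(4H^2+\kappa\sin^2\!\sigma\right)^2$, a perfect square. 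Consequently
\begin{equation*}
	\lim_{J\to0}\frac{dh}{d\sigma}(\sigma;a,H,J)=p_a(\sigma;H)\ \text{ for }\sin\sigma>0,
	\qquad
	\lim_{J\to0}\frac{dh}{d\sigma}(\sigma;a,H,J)=-\abs{a}\ \text{ for }\sin\sigma<0,
\end{equation*}
and \emph{not} $p_a(\sigma;H)\sgn(\sin\sigma)$ as you claim (note also that $C_3=16a^2H^4\neq0$ at $J=0$, so you cannot factor $\sin^2\!\sigma$ out of $f$). With your symmetric formula the limit of $\delta_a$ would be identically zero by the symmetry of $p_a$ about $\sigma=\pi$, and \eqref{eq:intersectionlimit} could never emerge; you sense this and insert a ``jump of height $\tfrac{\pi}{2}\abs{a}$ from the degenerating end near $r=0$'', but that is precisely the quantity that must be proven and you explicitly leave it open. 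In fact no extra boundary contribution exists: the integrand stays uniformly bounded (there is no singularity at $\sigma=\tfrac{3\pi}{2}$; from \eqref{ode_sigma} one sees $\frac{dh}{d\sigma}\to-\abs{a}$ as $r\to0$), so a bounded convergence theorem justifies the interchange even though the limit function is discontinuous at $\sigma=\pi$, and the constant arises simply as $\int_\pi^{3\pi/2}(-\abs{a})\,d\sigma=-\tfrac{\pi}{2}\abs{a}$.

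Two secondary gaps. First, finiteness needs more than ``zeros of a nonzero analytic function are isolated'': isolated zeros in the open interval $(\Hcrit,\Hex)$ could still accumulate at the endpoints, so you must additionally check that $\ell_a(H)=\int_{\pi/2}^{\pi}(p_a(\sigma;H)-\abs{a})\,d\sigma$ is nonzero at $H=\Hex$ and (for $\kappa>0$) at $H=0$, which makes the zero set discrete in a compact interval. Second, the exclusion of $(0,0)$ does not come for free from ``$0\leq\Hcrit<H_i$'': for $\kappa>0$ one has $\Hcrit=0$, and one must verify that $H=0$ does not solve \eqref{eq:intersectionlimit}. Since $p_a(\sigma;0)=\abs{4\tau-a\kappa}/\kappa$, the equation at $H=0$ reduces to $\abs{4\tau-a\kappa}=\abs{a}\kappa$, i.e.\ $2\tau^2-a\tau\kappa=0$ — this is exactly where the hypothesis $2\tau^2-a\tau\kappa\neq0$ enters, and it is absent from your argument.
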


\begin{proof}
	We divide the proof into several steps.
	
	\underline{Step 1:} $(0,0)\not\in L_a$.
	
	Suppose $(0,0)\in L_a$. Then \eqref{eq:intersectionlimit} becomes
	\begin{equation*}
		\frac{\pi}{2}\frac{\abs{4\tau-a\kappa}}{\kappa}=\frac{\pi}{2}\abs{a}.
	\end{equation*}
	Squaring on both sides gives $2\tau^2-a\tau\kappa=0$, which contradicts our assumption $2\tau^2-a\tau\kappa\neq0$. Thus, $(0,0)\not\in L_a$.
	
	\underline{Step 2:} $L_a$ is the non-empty union of finitely many points $(H_i,0)$.
	
	Consider the function
	\begin{equation*}
		\ell_a\colon (\Hcrit,\infty)\to\R,\quad H\mapsto\int\limits_{\frac{\pi}{2}}^{\pi} \Big(p_a(\sigma;H)-\abs{a}\Big)\,d\sigma.
	\end{equation*}
	
	This function is real analytic, because $p_a$ is real analytic. If the set of zeros of $\ell_a$ contains an accumulation point, then $\ell_a$ must be the zero function by the Identity Theorem for real analytic functions \cite{krantz_parks}. However, a straightforward computation shows that the integrand $p_a(\sigma)-\abs{a}$ has a sign for all $H\geq\Hex$. Hence, $\ell_a(H)\neq0$ for all $H\geq\Hex$. Thus, the set of zeros of $\ell_a$ must consist of isolated points $H_i\in(0,\Hex)$. The zeros of $\ell_a$ cannot accumulate when approaching $H=\Hex$, due to $\ell_a(\Hex)\neq0$ (see computation above). Moreover, if we extend $\ell_a$ to $H=0$ for $\kappa>0$ we get $\ell_a(0)\neq0$ by Step~1. Hence, the zeros are also isolated in the compact interval $[0,\Hex]$ for $\kappa>0$, and there are only finitely many. In case $\kappa=0$ the function $p_a$ is unbounded for $H\to0$. In case $\kappa<0$ the function $p_a$ is well-defined for $H=\frac{\sqrt{-\kappa}}{2}$ and $\sigma\in(\frac{\pi}{2},\pi]$, but unbounded for $\sigma\to\frac{\pi}{2}$. Hence, the zeros are also isolated in the compact interval $[\frac{\sqrt{-\kappa}}{2},\Hex]$ for $\kappa\leq0$, so there can be only finitely many. Then the same is true for $L_a=\lbrace(H_i,0)\colon\ell_a(H_i)=0\rbrace$.
	
	\underline{Step 3:} $\partial\Theta_a\neq\emptyset$.
	
	Because $\Theta_a$ satisfies the separation property of Lemma~\ref{lem:separationproperty} (\textit{iii}), there exists a sequence $((H_n,J_n))_{n\in\N}$ in $\Theta_a$ converging to a point $(H_0,0)\in\partial\Theta_a$ by Lemma~\ref{lem:boundary_set}. Hence, $\partial\Theta_a\neq\emptyset$.

	\underline{Step 4:} $\partial\Theta_a\subseteq L_a$.
	
	Pick $(H_0,0)\in\partial\Theta_a$. Then there exists a sequence $((H_n,J_n))_{n\in\N}$ in $\Theta_a$ converging to $(H_0,0)$. Then $\delta_a(H_n,J_n)=0$ for all $n\in\N$ and
	\begin{equation}\label{eq:interchange_limit_1}
		\lim\limits_{n\to\infty}\delta_a(H_n,J_n)=0,
	\end{equation}
	
	see Figure~\ref{fig:sphere_jump}. Next we want to interchange limit and integration and prove
	\begin{equation}\label{eq:interchange_limit}
		\lim\limits_{n\to\infty}\delta_a(H_n,J_n)
		=\lim\limits_{n\to\infty} \int\limits_{\frac{\pi}{2}}^{\frac{3\pi}{2}} \frac{dh}{d\sigma}(\sigma;a,H_n,J_n)\,d\sigma
		=\int\limits_{\frac{\pi}{2}}^{\frac{3\pi}{2}} \lim\limits_{n\to\infty} \frac{dh}{d\sigma}(\sigma;a,H_n,J_n)\,d\sigma.
	\end{equation}
	
	\begin{figure}[]
		\centering
		\footnotesize
		\def\svgwidth{0.75\textwidth}\executeiffilenewer{sphere_jump.svg}{sphere_jump.pdf}{inkscape -z -D --file=sphere_jump.svg 	--export-pdf=sphere_jump.pdf --export-latex}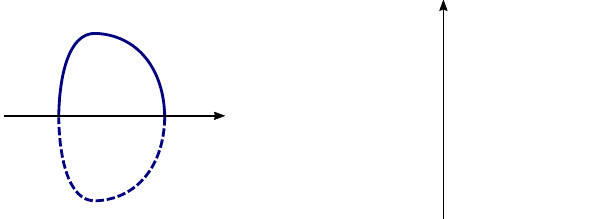
		\caption{Proof of Lemma~\ref{lem:intersectionlimit}, Step 4: Limit of a sequence of tube profile curves $\gamma_a(H_n,J_n)$ to a sphere type profile curve $\gamma_a(H_0,0)$.}
		\label{fig:sphere_jump}
	\end{figure}
	
	The integrand $\frac{dh}{d\sigma}$ is given by~\eqref{eq:height_function}. A careful inspection of \eqref{eq:height_function} taking into account the sign of $\sin\sigma$ gives the limit
	\begin{equation*}
		\lim\limits_{n\to\infty}\frac{dh}{d\sigma}(\sigma;a,H_n,J_n)=
		\left\lbrace\begin{array}{ll}
			p_a(\sigma;H_0) & \text{ for } \sin\sigma>0, \hspace{20mm}\, \\[4mm]
			-\abs{a} \hspace{1.4mm}\, & \text{ for } \sin\sigma<0.
		\end{array}\right.
	\end{equation*}
	
	This yields, splitting the integral at $\pi$,
	\begin{equation}\label{eq:interchange_limit_2}
		\begin{aligned}
			\int\limits_{\frac{\pi}{2}}^{\frac{3\pi}{2}} \lim\limits_{n\to\infty} \frac{dh}{d\sigma}(\sigma;a,H_n,J_n)\,d\sigma
			&=\int\limits_{\frac{\pi}{2}}^{\pi} p_a(\sigma;H_0)\,d\sigma -\frac{\pi}{2}\abs{a}.
		\end{aligned}
	\end{equation}
	
	Note that $\frac{dh}{d\sigma}(\sigma;a,H_n,J_n)$ does not converge uniformly in $\sigma$, because the limit function is no longer continuous. Nevertheless, all terms are uniformly bounded and we can interchange limit and integration \cite[Thm.~10.38]{apostol}. For $\kappa\leq0$ we have to restrict to $H>\epsilon>\Hcrit$ for the existence of a uniform bound. However, this does not affect our arguments since all points in $L_a$ are bounded away from $(\Hcrit,0)$. Thus, \eqref{eq:interchange_limit} remains true and \eqref{eq:interchange_limit_1} together with \eqref{eq:interchange_limit_2} proves \eqref{eq:intersectionlimit}. Therefore, $\partial\Theta_a\subseteq L_a$.
\end{proof}

\begin{rem}
	Numerical computations indicate that the integral equation~\eqref{eq:intersectionlimit} has a unique positive solution $\Hlimit$, so that $L_a$ and therefore also $\partial\Theta_a$ consist of only one point~$(\Hlimit,0)$. But again, we have no explicit formula for the integral, and the integrand $p_a(\sigma;H)$ is neither monotonic in $\sigma$ nor in $H$, so that we cannot prove this uniqueness here. A plot of the computed solution $\Hlimit$ as a function of the pitch $a$ is shown in Figure~\ref{fig:plot_H0}.
\end{rem}

Lemma~\ref{lem:intersectionlimit} does not only give us information about the boundary $\partial\Theta_a$, but it also implies the existence of tubes with $H<\Hex$:

\begin{cor}\label{cor:nonsharp_existence}
	Suppose $a$ is admissible and $2\tau^2-a\tau\kappa\neq0$. Then there exists a value $\Hex'<\Hex$, such that there exists a tube with mean curvature $H$ for all $H>\Hex'$. In particular, the existence bound $\Hex$ in \eqref{eq:definition_Hexistence} is not sharp.
\end{cor}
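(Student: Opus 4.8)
The plan is to compare the tube region $\Theta_a$ with the zero set of the real analytic function $\ell_a$ introduced in Lemma~\ref{lem:intersectionlimit}, exploiting that $\ell_a$ records the boundary values of the closing defect $\delta_a$ along the edge $\L_1=\{J=0\}$ of $\Xi$. First I would observe that the limit computation in Step~4 of the proof of Lemma~\ref{lem:intersectionlimit} never used that the surfaces in question are tubes: for \emph{every} fixed $H>\Hcrit$ one has $\frac{dh}{d\sigma}(\sigma;a,H,J)\to p_a(\sigma;H)$ for $\sin\sigma>0$ and $\to-\abs a$ for $\sin\sigma<0$ as $J\to0^-$, and these are uniformly bounded for $H$ away from $\Hcrit$, so that
\[
	\lim\limits_{J\to0^-}\delta_a(H,J)=\int\limits_{\frac\pi2}^{\pi}p_a(\sigma;H)\,d\sigma-\frac\pi2\abs a=\ell_a(H).
\]

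Next I would carry out the short computation showing that $\Jtwo$ is strictly decreasing in $H$ with $\Jtwo(\Hex)=0$ — the same algebraic relation that defines $\Hex$ via~\eqref{eq:definition_Hexistence} — hence $\Jtwo(H)>0$ for $\Hcrit<H<\Hex$ and $\Jtwo(H)<0$ for $H>\Hex$. Together with $\Jone(H)<0$ and, for $\kappa>0$, $\Jone(H)>-\tfrac{4H}{\kappa}$ (the estimate already used in the proof of Lemma~\ref{lem:boundary_set}), this shows that in every vertical slice $\Xi\cap(\{H\}\times\R)$ the portion with $J$ near its lower end lies in $\Xi_a^-$, while the portion with $J$ near $0$ lies in $\Xi_a^+$ when $H>\Hex$ and in $\Xi_a^0$ when $\Hcrit<H<\Hex$. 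Writing $s\in\{\pm1\}$ for the sign of $\delta_a$ on $\Xi_a^+$ (opposite to its sign on $\Xi_a^-$), the displayed identity together with $\ell_a(H)\neq0$ for $H\geq\Hex$ (Lemma~\ref{lem:intersectionlimit}) forces $\sgn\ell_a\equiv s$ on $[\Hex,\infty)$.

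Now set $\Hex'\coloneqq\max\{H>\Hcrit\colon\ell_a(H)=0\}$, which is well defined with $\Hcrit<\Hex'<\Hex$ because, by Lemma~\ref{lem:intersectionlimit}, $\ell_a$ has at least one zero and only finitely many, all lying in $(\Hcrit,\Hex)$. Since $\ell_a$ is real analytic, nowhere zero on $(\Hex',\infty)$, and of sign $s$ on $[\Hex,\infty)$, it has sign $s$ on all of $(\Hex',\infty)$. It then remains to exhibit a tube for each $H>\Hex'$: for $H\geq\Hex$ this is \cite[Thm.~3.6]{kaese}, and for $H\in(\Hex',\Hex)$ one argues by contradiction. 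If there were no tube of mean curvature $H$, then $\delta_a(H,\cdot)$ would be continuous and nowhere zero on the connected slice $\Xi\cap(\{H\}\times\R)$, hence of constant sign; as that slice meets $\Xi_a^-$, the sign is $-s$, so $\delta_a(H,J)$ has sign $-s$ for $J\to0^-$ and therefore $s\,\ell_a(H)=s\lim_{J\to0^-}\delta_a(H,J)\leq0$, contradicting $s\,\ell_a(H)>0$. Hence tubes exist for all $H>\Hex'$ with $\Hex'<\Hex$, so the bound $\Hex$ is not sharp.

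The only genuinely delicate point is bookkeeping the sign conventions: $\sgn\delta_a|_{\Xi_a^\pm}$ is reversed according to whether $\kappa>4\tau^2$ or $\kappa<4\tau^2$, so the cases $\kappa>4\tau^2$, $0<\kappa<4\tau^2$, $\kappa=0$ and $\kappa<0$ should each be checked; this is harmless because only the product $s\,\ell_a$ and the relation $\sgn\delta_a|_{\Xi_a^+}=-\sgn\delta_a|_{\Xi_a^-}$ enter the argument. I therefore expect the main obstacle to be expository — organizing these cases and re-using, rather than re-deriving, the estimates of Lemmas~\ref{lem:boundary_set} and~\ref{lem:intersectionlimit} — rather than any new analytic difficulty.
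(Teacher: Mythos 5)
Your proof is correct, but it takes a genuinely different route from the paper's. The paper's argument is purely topological: by Lemma~\ref{lem:intersectionlimit} it picks a boundary point $(\Hex',0)\in\partial\Theta_a$ with $\Hex'<\Hex$ that is the limit of a separating connected component of $\overline{\Theta}_a$; since that component is connected, unbounded in $H$ (it must separate $\Xi_a^+$ from $\Xi_a^-$ at every sufficiently large level), and accumulates at $(\Hex',0)$, its projection to the $H$-axis contains all of $(\Hex',\infty)$. You instead argue slice by slice: you upgrade the limit computation of Step~4 of Lemma~\ref{lem:intersectionlimit} to the identity $\lim_{J\to0^-}\delta_a(H,J)=\ell_a(H)$ for \emph{every} fixed $H>\Hcrit$, determine the sign of $\ell_a$ beyond its largest zero using $\Jtwo(\Hex)=0$ and the known signs of $\delta_a$ on $\Xi_a^{\pm}$, and then run an intermediate value argument in $J$ on each vertical slice with $H\in(\Hex',\Hex)$. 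Both arguments are sound; yours buys an explicit candidate $\Hex'=\max\lbrace H:\ell_a(H)=0\rbrace$, directly tied to equation~\eqref{eq:intersectionlimit} and hence to the Conjecture of the introduction, and it avoids any appeal to the component/Euler-space structure of $\Theta_a$, at the price of redoing the limit--integral interchange for arbitrary $H$ and some sign bookkeeping. Two small points to tidy up: at $H=\Hex$ itself \cite[Thm.~3.6]{kaese} gives nothing (it covers only $H>\Hex$), but your slice argument applies there verbatim because $\Jtwo(\Hex)=0$ makes $\Xi_a^+$ miss that slice; and the claim that $\sgn\ell_a\equiv s$ on the closed ray $[\Hex,\infty)$ should, at the endpoint $H=\Hex$, be obtained by continuity from $H>\Hex$ rather than from the $\Xi_a^+$ description of the slice, which is empty there.
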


\begin{proof}
	Pick a point $(\Hex',0)$ in the boundary $\partial\Theta_a$, which is the limit of a separating component of~$\overline{\Theta}_a$. This separating component contains points $(H,J)\in\Theta_a$ for every $H>\Hex'$, i.e., there exists a tube with mean curvature $H$ for all $H>\Hex'$. Moreover, $\Hex'<\Hex$ by Lemma~\ref{lem:intersectionlimit}. 
\end{proof}

\begin{figure}[]
	\centering
	\footnotesize
	\def\svgwidth{0.48\textwidth}\executeiffilenewer{plot_H0_PSL2.svg}{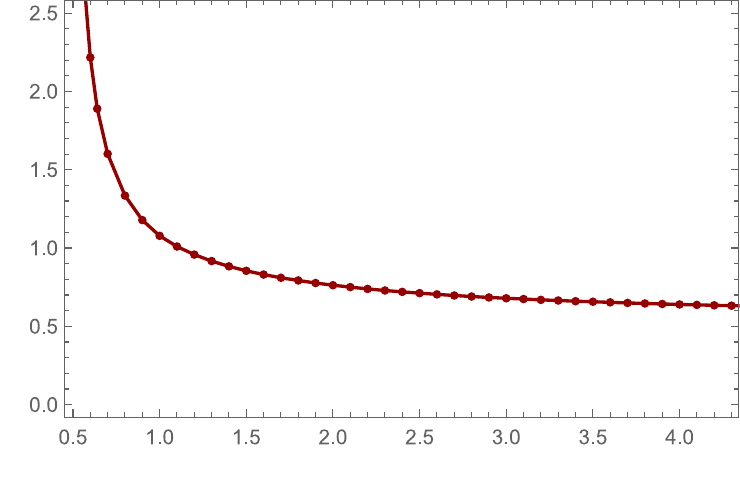}{inkscape -z -D --file=plot_H0_PSL2.svg 	--export-pdf=plot_H0_PSL2.pdf --export-latex}
\begingroup%
  \makeatletter%
  \providecommand\color[2][]{%
    \errmessage{(Inkscape) Color is used for the text in Inkscape, but the package 'color.sty' is not loaded}%
    \renewcommand\color[2][]{}%
  }%
  \providecommand\transparent[1]{%
    \errmessage{(Inkscape) Transparency is used (non-zero) for the text in Inkscape, but the package 'transparent.sty' is not loaded}%
    \renewcommand\transparent[1]{}%
  }%
  \providecommand\rotatebox[2]{#2}%
  \newcommand*\fsize{\dimexpr\f@size pt\relax}%
  \newcommand*\lineheight[1]{\fontsize{\fsize}{#1\fsize}\selectfont}%
  \ifx\svgwidth\undefined%
    \setlength{\unitlength}{354.96710205bp}%
    \ifx\svgscale\undefined%
      \relax%
    \else%
      \setlength{\unitlength}{\unitlength * \real{\svgscale}}%
    \fi%
  \else%
    \setlength{\unitlength}{\svgwidth}%
  \fi%
  \global\let\svgwidth\undefined%
  \global\let\svgscale\undefined%
  \makeatother%
  \begin{picture}(1,0.65697887)%
    \lineheight{1}%
    \setlength\tabcolsep{0pt}%
    \put(0,0){\includegraphics[width=\unitlength,page=1]{plot_H0_PSL2.pdf}}%
    \put(0.52411413,0.00290249){\makebox(0,0)[lt]{\lineheight{1.25}\smash{\begin{tabular}[t]{l}\textbf{$a$}\end{tabular}}}}%
    \put(-0.00137556,0.34722622){\makebox(0,0)[lt]{\lineheight{1.25}\smash{\begin{tabular}[t]{l}\textbf{$\rotatebox{90}{$\Hlimit(a)$}$}\end{tabular}}}}%
    \put(0,0){\includegraphics[width=\unitlength,page=2]{plot_H0_PSL2.pdf}}%
  \end{picture}%
\endgroup%

	\hfill
	\def\svgwidth{0.48\textwidth}\executeiffilenewer{plot_H0_Berger.svg}{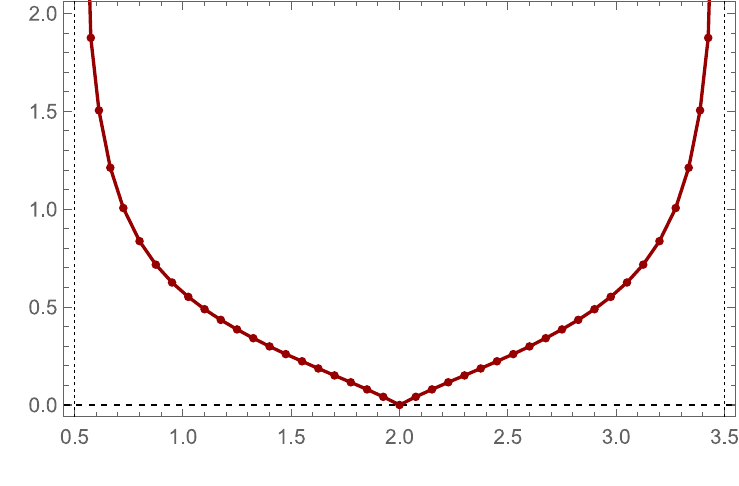}{inkscape -z -D --file=plot_H0_Berger.svg 	--export-pdf=plot_H0_Berger.pdf --export-latex}
\begingroup%
  \makeatletter%
  \providecommand\color[2][]{%
    \errmessage{(Inkscape) Color is used for the text in Inkscape, but the package 'color.sty' is not loaded}%
    \renewcommand\color[2][]{}%
  }%
  \providecommand\transparent[1]{%
    \errmessage{(Inkscape) Transparency is used (non-zero) for the text in Inkscape, but the package 'transparent.sty' is not loaded}%
    \renewcommand\transparent[1]{}%
  }%
  \providecommand\rotatebox[2]{#2}%
  \newcommand*\fsize{\dimexpr\f@size pt\relax}%
  \newcommand*\lineheight[1]{\fontsize{\fsize}{#1\fsize}\selectfont}%
  \ifx\svgwidth\undefined%
    \setlength{\unitlength}{354.29055786bp}%
    \ifx\svgscale\undefined%
      \relax%
    \else%
      \setlength{\unitlength}{\unitlength * \real{\svgscale}}%
    \fi%
  \else%
    \setlength{\unitlength}{\svgwidth}%
  \fi%
  \global\let\svgwidth\undefined%
  \global\let\svgscale\undefined%
  \makeatother%
  \begin{picture}(1,0.65779623)%
    \lineheight{1}%
    \setlength\tabcolsep{0pt}%
    \put(0,0){\includegraphics[width=\unitlength,page=1]{plot_H0_Berger.pdf}}%
    \put(-0.00137819,0.34643644){\makebox(0,0)[lt]{\lineheight{1.25}\smash{\begin{tabular}[t]{l}\textbf{$\rotatebox{90}{$\Hlimit(a)$}$}\end{tabular}}}}%
    \put(0.52255394,0.00290799){\makebox(0,0)[lt]{\lineheight{1.25}\smash{\begin{tabular}[t]{l}\textbf{$a$}\end{tabular}}}}%
    \put(0,0){\includegraphics[width=\unitlength,page=2]{plot_H0_Berger.pdf}}%
  \end{picture}%
\endgroup%

	\caption{Plot of the numerically computed solution $\Hlimit(a)$ of \eqref{eq:intersectionlimit} as a function of the pitch $a$ in $\PSLzwei=\E(-1,1)$ (left) and $\Berger=\E(1,1)$ (right). In both cases $\Hlimit(a)$ tends to infinity when $a$ approaches the boundaries of the admissible interval (vertical dotted lines). In the left plot $\Hlimit(a)$ approaches the critical curvature $\Hcrit=\frac{1}{2}$ as $a\to\infty$ (horizontal dotted line). In the right plot $\Hlimit(a)$ is symmetric with respect to the horizontal pitch $a=\frac{2\tau}{\kappa}$ with $\Hlimit(\frac{2\tau}{\kappa})=0$.}
	\label{fig:plot_H0}
\end{figure}

\subsection{Construction of a tube family $\Tfamily$}\
\label{sec:construction_tubefamily}

Some possible and impossible configurations of $\Theta_a$ subject to Lemma~\ref{lem:separationproperty} and Lemma~\ref{lem:intersectionlimit} are shown in Figure~\ref{fig:modulispace_tube_shapes}. We can now define a continuous tube family due to the following reasoning: There exists a connected component in $\Theta_a$, which separates~$\Xi_a^+$ from~$\Xi_a^-$ and converges to a point in $\partial\Theta_a$. It is the underlying set of a simplicial complex consisting only of edges and vertices. By removing edges and vertices from the simplicial complex we can construct a \textit{reduced} simplicial complex, which still separates, but the link of every vertex has Euler characteristic $2$. That is, this reduced simplicial complex can be parameterized as a continuous curve $(\Htube(\mu),\Jtube(\mu))$ for $\mu\in[0,\infty)$, where $(\Htube(0),\Jtube(0))\in\partial\Theta_a$ and $(\Htube(\mu),\Jtube(\mu))\to(\infty,-\infty)$ as $\mu\to\infty$. Such a curve need not be unique.

\begin{defi}\label{def:tubefamily}
	Let $(\Htube,\Jtube)\colon[0,\infty)\to\overline{\Theta}_a, \mu\mapsto(\Htube(\mu),\Jtube(\mu))$ be a curve as explained above. We call the continuous $1$-parameter family
	\begin{equation}\label{eq:def_tubefamily_nonhor}
		\Tfamily\coloneqq\left\lbrace T_a(\mu)\coloneqq\Sigma_a(\Htube(\mu),\Jtube(\mu)) \colon \mu\in(0,\infty) \right\rbrace
	\end{equation}
	a \textit{tube family}.
\end{defi}

\begin{figure}[]
	\centering
	\footnotesize
	\begin{subfigure}[t]{0.44\textwidth}
		\def\svgwidth{\textwidth}\executeiffilenewer{moduli_space_tube_1.svg}{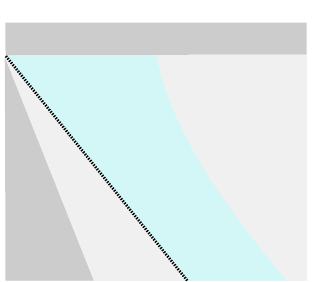}{inkscape -z -D --file=moduli_space_tube_1.svg 	--export-pdf=moduli_space_tube_1.pdf --export-latex}
\begingroup%
  \makeatletter%
  \providecommand\color[2][]{%
    \errmessage{(Inkscape) Color is used for the text in Inkscape, but the package 'color.sty' is not loaded}%
    \renewcommand\color[2][]{}%
  }%
  \providecommand\transparent[1]{%
    \errmessage{(Inkscape) Transparency is used (non-zero) for the text in Inkscape, but the package 'transparent.sty' is not loaded}%
    \renewcommand\transparent[1]{}%
  }%
  \providecommand\rotatebox[2]{#2}%
  \newcommand*\fsize{\dimexpr\f@size pt\relax}%
  \newcommand*\lineheight[1]{\fontsize{\fsize}{#1\fsize}\selectfont}%
  \ifx\svgwidth\undefined%
    \setlength{\unitlength}{157.87544632bp}%
    \ifx\svgscale\undefined%
      \relax%
    \else%
      \setlength{\unitlength}{\unitlength * \real{\svgscale}}%
    \fi%
  \else%
    \setlength{\unitlength}{\svgwidth}%
  \fi%
  \global\let\svgwidth\undefined%
  \global\let\svgscale\undefined%
  \makeatother%
  \begin{picture}(1,0.85554491)%
    \lineheight{1}%
    \setlength\tabcolsep{0pt}%
    \put(0,0){\includegraphics[width=\unitlength,page=1]{moduli_space_tube_1.pdf}}%
    \put(0.04397846,0.82926369){\makebox(0,0)[lt]{\lineheight{1.25}\smash{\begin{tabular}[t]{l}\textbf{$J$}\end{tabular}}}}%
    \put(0.95446131,0.72116233){\makebox(0,0)[lt]{\lineheight{1.25}\smash{\begin{tabular}[t]{l}\textbf{$H$}\end{tabular}}}}%
    \put(0.70921694,0.47071484){\makebox(0,0)[lt]{\lineheight{1.25}\smash{\begin{tabular}[t]{l}\textbf{$\Xi_a^+$}\end{tabular}}}}%
    \put(0.33643784,0.08506682){\makebox(0,0)[lt]{\lineheight{1.25}\smash{\begin{tabular}[t]{l}\textbf{$\Xi_a^-$}\end{tabular}}}}%
    \put(0,0){\includegraphics[width=\unitlength,page=2]{moduli_space_tube_1.pdf}}%
    \put(0.56391215,0.05181275){\makebox(0,0)[lt]{\lineheight{1.25}\smash{\begin{tabular}[t]{l}\textbf{$\Xi_a^0$}\end{tabular}}}}%
    \put(0,0){\includegraphics[width=\unitlength,page=3]{moduli_space_tube_1.pdf}}%
    \put(0.46137616,0.71792769){\makebox(0,0)[lt]{\lineheight{1.25}\smash{\begin{tabular}[t]{l}\textbf{$\Hex$}\end{tabular}}}}%
  \end{picture}%
\endgroup%

		\subcaption{$\Theta_a$ does not satisfy the separation property.}
	\end{subfigure}
	\hspace{12mm}
	\begin{subfigure}[t]{0.44\textwidth}
		\def\svgwidth{\textwidth}\executeiffilenewer{moduli_space_tube_3.svg}{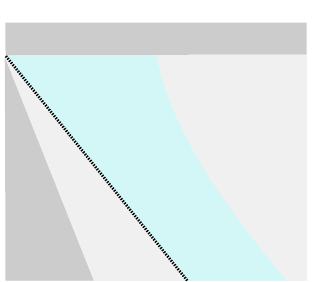}{inkscape -z -D --file=moduli_space_tube_3.svg 	--export-pdf=moduli_space_tube_3.pdf --export-latex}
\begingroup%
  \makeatletter%
  \providecommand\color[2][]{%
    \errmessage{(Inkscape) Color is used for the text in Inkscape, but the package 'color.sty' is not loaded}%
    \renewcommand\color[2][]{}%
  }%
  \providecommand\transparent[1]{%
    \errmessage{(Inkscape) Transparency is used (non-zero) for the text in Inkscape, but the package 'transparent.sty' is not loaded}%
    \renewcommand\transparent[1]{}%
  }%
  \providecommand\rotatebox[2]{#2}%
  \newcommand*\fsize{\dimexpr\f@size pt\relax}%
  \newcommand*\lineheight[1]{\fontsize{\fsize}{#1\fsize}\selectfont}%
  \ifx\svgwidth\undefined%
    \setlength{\unitlength}{157.87544632bp}%
    \ifx\svgscale\undefined%
      \relax%
    \else%
      \setlength{\unitlength}{\unitlength * \real{\svgscale}}%
    \fi%
  \else%
    \setlength{\unitlength}{\svgwidth}%
  \fi%
  \global\let\svgwidth\undefined%
  \global\let\svgscale\undefined%
  \makeatother%
  \begin{picture}(1,0.85554491)%
    \lineheight{1}%
    \setlength\tabcolsep{0pt}%
    \put(0,0){\includegraphics[width=\unitlength,page=1]{moduli_space_tube_3.pdf}}%
    \put(0.04397846,0.82926369){\makebox(0,0)[lt]{\lineheight{1.25}\smash{\begin{tabular}[t]{l}\textbf{$J$}\end{tabular}}}}%
    \put(0.95446131,0.72116233){\makebox(0,0)[lt]{\lineheight{1.25}\smash{\begin{tabular}[t]{l}\textbf{$H$}\end{tabular}}}}%
    \put(0.70921694,0.47071484){\makebox(0,0)[lt]{\lineheight{1.25}\smash{\begin{tabular}[t]{l}\textbf{$\Xi_a^+$}\end{tabular}}}}%
    \put(0.33643784,0.08506682){\makebox(0,0)[lt]{\lineheight{1.25}\smash{\begin{tabular}[t]{l}\textbf{$\Xi_a^-$}\end{tabular}}}}%
    \put(0,0){\includegraphics[width=\unitlength,page=2]{moduli_space_tube_3.pdf}}%
    \put(0.56391215,0.05181275){\makebox(0,0)[lt]{\lineheight{1.25}\smash{\begin{tabular}[t]{l}\textbf{$\Xi_a^0$}\end{tabular}}}}%
    \put(0,0){\includegraphics[width=\unitlength,page=3]{moduli_space_tube_3.pdf}}%
    \put(0.46137616,0.71792769){\makebox(0,0)[lt]{\lineheight{1.25}\smash{\begin{tabular}[t]{l}\textbf{$\Hex$}\end{tabular}}}}%
  \end{picture}%
\endgroup%

		\subcaption{$\Theta_a$ is not an Euler space, because it contains a vertex with three edges.}
	\end{subfigure}
	\\[4mm]
	\begin{subfigure}[t]{0.44\textwidth}
		\def\svgwidth{\textwidth}\executeiffilenewer{moduli_space_tube_2.svg}{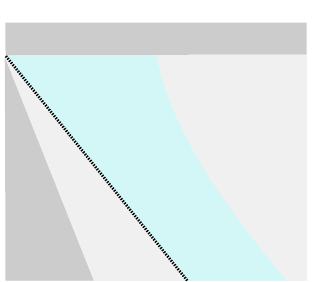}{inkscape -z -D --file=moduli_space_tube_2.svg 	--export-pdf=moduli_space_tube_2.pdf --export-latex}
\begingroup%
  \makeatletter%
  \providecommand\color[2][]{%
    \errmessage{(Inkscape) Color is used for the text in Inkscape, but the package 'color.sty' is not loaded}%
    \renewcommand\color[2][]{}%
  }%
  \providecommand\transparent[1]{%
    \errmessage{(Inkscape) Transparency is used (non-zero) for the text in Inkscape, but the package 'transparent.sty' is not loaded}%
    \renewcommand\transparent[1]{}%
  }%
  \providecommand\rotatebox[2]{#2}%
  \newcommand*\fsize{\dimexpr\f@size pt\relax}%
  \newcommand*\lineheight[1]{\fontsize{\fsize}{#1\fsize}\selectfont}%
  \ifx\svgwidth\undefined%
    \setlength{\unitlength}{157.87544632bp}%
    \ifx\svgscale\undefined%
      \relax%
    \else%
      \setlength{\unitlength}{\unitlength * \real{\svgscale}}%
    \fi%
  \else%
    \setlength{\unitlength}{\svgwidth}%
  \fi%
  \global\let\svgwidth\undefined%
  \global\let\svgscale\undefined%
  \makeatother%
  \begin{picture}(1,0.85554491)%
    \lineheight{1}%
    \setlength\tabcolsep{0pt}%
    \put(0,0){\includegraphics[width=\unitlength,page=1]{moduli_space_tube_2.pdf}}%
    \put(0.04397846,0.82926369){\makebox(0,0)[lt]{\lineheight{1.25}\smash{\begin{tabular}[t]{l}\textbf{$J$}\end{tabular}}}}%
    \put(0.95446131,0.72116233){\makebox(0,0)[lt]{\lineheight{1.25}\smash{\begin{tabular}[t]{l}\textbf{$H$}\end{tabular}}}}%
    \put(0.70921694,0.47071484){\makebox(0,0)[lt]{\lineheight{1.25}\smash{\begin{tabular}[t]{l}\textbf{$\Xi_a^+$}\end{tabular}}}}%
    \put(0.33643784,0.08506682){\makebox(0,0)[lt]{\lineheight{1.25}\smash{\begin{tabular}[t]{l}\textbf{$\Xi_a^-$}\end{tabular}}}}%
    \put(0,0){\includegraphics[width=\unitlength,page=2]{moduli_space_tube_2.pdf}}%
    \put(0.56391215,0.05181275){\makebox(0,0)[lt]{\lineheight{1.25}\smash{\begin{tabular}[t]{l}\textbf{$\Xi_a^0$}\end{tabular}}}}%
    \put(0,0){\includegraphics[width=\unitlength,page=3]{moduli_space_tube_2.pdf}}%
    \put(0.46137616,0.71792769){\makebox(0,0)[lt]{\lineheight{1.25}\smash{\begin{tabular}[t]{l}\textbf{$\Hex$}\end{tabular}}}}%
  \end{picture}%
\endgroup%

		\subcaption{$\partial\Theta_a$ has an accumulation point.}
	\end{subfigure}
	\hspace{12mm}
	\begin{subfigure}[t]{0.44\textwidth}
		\def\svgwidth{\textwidth}\executeiffilenewer{moduli_space_tube_0.svg}{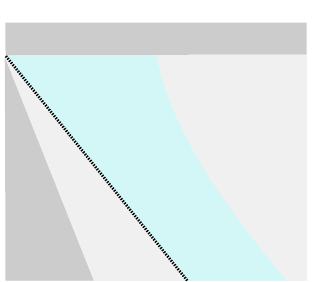}{inkscape -z -D --file=moduli_space_tube_0.svg 	--export-pdf=moduli_space_tube_0.pdf --export-latex}
\begingroup%
  \makeatletter%
  \providecommand\color[2][]{%
    \errmessage{(Inkscape) Color is used for the text in Inkscape, but the package 'color.sty' is not loaded}%
    \renewcommand\color[2][]{}%
  }%
  \providecommand\transparent[1]{%
    \errmessage{(Inkscape) Transparency is used (non-zero) for the text in Inkscape, but the package 'transparent.sty' is not loaded}%
    \renewcommand\transparent[1]{}%
  }%
  \providecommand\rotatebox[2]{#2}%
  \newcommand*\fsize{\dimexpr\f@size pt\relax}%
  \newcommand*\lineheight[1]{\fontsize{\fsize}{#1\fsize}\selectfont}%
  \ifx\svgwidth\undefined%
    \setlength{\unitlength}{157.87544632bp}%
    \ifx\svgscale\undefined%
      \relax%
    \else%
      \setlength{\unitlength}{\unitlength * \real{\svgscale}}%
    \fi%
  \else%
    \setlength{\unitlength}{\svgwidth}%
  \fi%
  \global\let\svgwidth\undefined%
  \global\let\svgscale\undefined%
  \makeatother%
  \begin{picture}(1,0.85554491)%
    \lineheight{1}%
    \setlength\tabcolsep{0pt}%
    \put(0,0){\includegraphics[width=\unitlength,page=1]{moduli_space_tube_0.pdf}}%
    \put(0.04397846,0.82926369){\makebox(0,0)[lt]{\lineheight{1.25}\smash{\begin{tabular}[t]{l}\textbf{$J$}\end{tabular}}}}%
    \put(0.95446131,0.72116233){\makebox(0,0)[lt]{\lineheight{1.25}\smash{\begin{tabular}[t]{l}\textbf{$H$}\end{tabular}}}}%
    \put(0.70921694,0.47071484){\makebox(0,0)[lt]{\lineheight{1.25}\smash{\begin{tabular}[t]{l}\textbf{$\Xi_a^+$}\end{tabular}}}}%
    \put(0.33643784,0.08506682){\makebox(0,0)[lt]{\lineheight{1.25}\smash{\begin{tabular}[t]{l}\textbf{$\Xi_a^-$}\end{tabular}}}}%
    \put(0,0){\includegraphics[width=\unitlength,page=2]{moduli_space_tube_0.pdf}}%
    \put(0.56391215,0.05181275){\makebox(0,0)[lt]{\lineheight{1.25}\smash{\begin{tabular}[t]{l}\textbf{$\Xi_a^0$}\end{tabular}}}}%
    \put(0,0){\includegraphics[width=\unitlength,page=3]{moduli_space_tube_0.pdf}}%
    \put(0.46137616,0.71792769){\makebox(0,0)[lt]{\lineheight{1.25}\smash{\begin{tabular}[t]{l}\textbf{$\Hex$}\end{tabular}}}}%
  \end{picture}%
\endgroup%

		\subcaption{$\Theta_a$ meets all requirements.}
	\end{subfigure}
	\caption{Four schematic pictures of $\Theta_a$ (dark blue). Pictures (a)-(c) violate some of the requirements of Lemma~\ref{lem:separationproperty} or Lemma~\ref{lem:intersectionlimit}. Only (d) satisfies all properties.}
	\label{fig:modulispace_tube_shapes}
\end{figure}

\begin{rem}\label{rem:construction_tubefamily}
	Numerical computation indicate that the curve $(\Htube(\mu),\Jtube(\mu))$ is unique. Moreover, the functions $\mu\mapsto\Htube(\mu)$ and $\mu\mapsto\Jtube(\mu)$ seem strictly monotonic for all admissible pitches. Therefore we expect we can reparameterize the curve as a graph over the mean curvature $H$,\vspace{3mm}
	\begin{equation}\label{eq:def_tubefamily_num}
		\Tfamily=\left\lbrace T_a(H)=\Sigma_a(H,\Jtube(H)) \colon H\in(\Hlimit,\infty) \right\rbrace,
	\end{equation}
	
	where $\Hlimit\coloneqq\Htube(\mu=0)$. A monotonicity result in this form would imply uniqueness of tubes for fixed pitch $a$ and mean curvature $H$. These observations led us to the conjecture stated in the introduction (see page~\pageref{conj:tube_existence}). The reader is invited to think of the family $\Tfamily$ as in \eqref{eq:def_tubefamily_num}. For the special case $2\tau^2-a\tau\kappa=0$ the definition in \eqref{eq:def_tubefamily_nonhor} coincides with \eqref{eq:def_tubefamily_hor} by choosing $\Htube(\mu)=\mu$ and $\Jtube(\mu)=-\frac{2\mu}{\kappa}$.
\end{rem}

\newpage

\subsection{Properties of the tube family $\Tfamily$}\

Despite the lack of uniqueness in the definition of $\Tfamily$, we can gather some properties of the tube family $\Tfamily$ for $\mu=0$ and $\mu\to\infty$.

\begin{lem}\label{lem:limitsurface}
	The limit surface of $\Tfamily$,
	\begin{equation*}
		\Sigma_a^{\lim}\coloneqq\lim\limits_{\mu\to0}T_a(\mu)
		=\lim\limits_{\mu\to0}\Sigma_a(\Htube(\mu),\Jtube(\mu))
	\end{equation*}
	 is, up to vertical translation, the surface $\Sigma_a(\Hlimit,\Jlimit)$ with $\Hlimit\coloneqq\Htube(0)$ and $\Jlimit\coloneqq\Jtube(0)=0$. Its maximal height $\hmax(\Hlimit,\Jlimit)=\frac{\pi}{2}\abs{a}$ is independent of the value of $\Hlimit$ and $\Jlimit$. If $2\tau^2-a\tau\kappa=0$, the limit surface is a spherical helicoid. Otherwise, $\Sigma_a^{\lim}$ is of sphere type.
\end{lem}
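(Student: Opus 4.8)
The plan is to compute the limiting profile curve of the family $T_a(\mu)$ as $\mu\to 0$ by analyzing the limit of the solution curves $\gamma_a(\sigma;\Htube(\mu),\Jtube(\mu))$ in $(r,\sigma)$-coordinates and then identifying the resulting surface type. First I would recall that by Lemma~\ref{lem:boundary_set} the boundary point $(\Htube(0),\Jtube(0))$ lies on $\L_1$, so $\Jlimit=\Jtube(0)=0$, and by Lemma~\ref{lem:intersectionlimit} the value $\Hlimit=\Htube(0)$ is one of the finitely many solutions of the integral equation~\eqref{eq:intersectionlimit} with $\Hlimit\in(\Hcrit,\Hex)$. Since the solution curves of \eqref{ode_sigma} depend continuously on the parameters $(H,J)$ (analyticity of the ODE, Cauchy--Kowalevsky, as in Lemma~\ref{lem:separationproperty}), the curve $T_a(\mu)$ converges to $\Sigma_a(\Hlimit,0)$ up to vertical translation; this gives the first assertion. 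For the maximal height, I would use the explicit limit of the height integrand computed in the proof of Lemma~\ref{lem:intersectionlimit}, Step~4: on $\sigma\in[\frac{\pi}{2},\pi]$ the integrand $\frac{dh}{d\sigma}(\sigma;a,\Htube(\mu),\Jtube(\mu))$ converges to $p_a(\sigma;\Hlimit)$, so
\begin{equation*}
	\hmax(\Hlimit,0)=\int\limits_{\frac{\pi}{2}}^{\pi} p_a(\sigma;\Hlimit)\,d\sigma
	=\frac{\pi}{2}\abs{a},
\end{equation*}
where the last equality is exactly equation~\eqref{eq:intersectionlimit} characterizing $\Hlimit$. Hence $\hmax$ equals $\frac{\pi}{2}\abs{a}$ regardless of the particular root $\Hlimit$.

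The remaining point is the surface-type identification. When $2\tau^2-a\tau\kappa=0$ we are in the case $\SKR$ (arbitrary $a$) or $\Berger$ with horizontal pitch $a=\frac{2\tau}{\kappa}$, where $\Hex=0$ and $\Hlimit=0$; then $\Sigma_a^{\lim}=\Sigma_a(0,0)$, which is the spherical helicoid of Lemma~\ref{lem:minimal_helicoid}. In the general case $2\tau^2-a\tau\kappa\neq0$ we have $\Hlimit>\Hcrit\geq 0$ and $\Jlimit=0$, so $(\Hlimit,0)\in\L_1$; by the classification of screw motion CMC surfaces recalled after~\eqref{eq:modulispace} (following \cite[Thm.~3.1]{kaese}), the value $J=0$ with $H>0$ corresponds precisely to sphere type surfaces (this is where sphere type surfaces arise as the $J=0$ limits). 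Thus $\Sigma_a^{\lim}$ is of sphere type. I would also remark that this is geometrically visible: the radius function $r(\sigma)$ in~\eqref{eq:radius_sigma} at $J=0$ degenerates to $r_-=0$ (the curve touches the axis) while $r_+>0$, the closing-defect condition $\delta_a=0$ fails in the limit in the sense that the limiting profile curve is no longer closed, and Figure~\ref{fig:sphere_jump} already depicts exactly this degeneration of tube profile curves to a sphere type profile curve.

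The main obstacle I anticipate is making the convergence $T_a(\mu)\to\Sigma_a(\Hlimit,0)$ and the passage to the limit in the height integral rigorous, since the integrand $\frac{dh}{d\sigma}$ has an integrable singularity at $\sigma=\frac{\pi}{2}$ (where $r\to r_+$, i.e.\ the square root $\sqrt{\sin^2\sigma-\kappa J^2-4HJ}$ in~\eqref{eq:height_function} may vanish) and the convergence is not uniform near $\sigma=\frac{\pi}{2}$ and $\sigma=\pi$. However, this is precisely the uniform-boundedness argument already carried out in Lemma~\ref{lem:intersectionlimit}, Step~4, invoking \cite[Thm.~10.38]{apostol}, so here one only needs to cite it; for $\kappa\leq 0$ one again uses that $\Hlimit$ is bounded away from $\Hcrit$, so the required uniform bound exists. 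A minor secondary point is checking the $\kappa=0$ and $\kappa<0$ cases separately for the integrand behaviour as in the earlier lemma, but no new difficulty arises. Everything else — the explicit value $\frac{\pi}{2}\abs{a}$ and the type identification — follows by directly quoting \eqref{eq:intersectionlimit}, Lemma~\ref{lem:minimal_helicoid}, and the classification theorem.
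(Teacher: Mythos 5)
Your proposal is correct and follows essentially the same route as the paper's (much terser) proof: identify $\Jlimit=0$ and $\Hlimit$ via Lemmas~\ref{lem:boundary_set} and~\ref{lem:intersectionlimit}, read off $\hmax=\frac{\pi}{2}\abs{a}$ from the integral equation~\eqref{eq:intersectionlimit} (whose left-hand side is exactly the limiting height integral from Step~4 of that proof), and identify the surface type from the classification theorem together with Lemma~\ref{lem:minimal_helicoid}. The extra detail you supply on continuous dependence on parameters and the interchange of limit and integral is exactly what the paper implicitly delegates to the proof of Lemma~\ref{lem:intersectionlimit}.
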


\begin{proof}
	The limit surface is by definition the surface $\Sigma_a(\Hlimit,\Jlimit)$ with $\Hlimit\coloneqq\Htube(0)$ and $\Jlimit\coloneqq\Jtube(0)=0$. It is $\Hlimit=0$ if $2\tau^2-a\tau\kappa=0$ and $\Hlimit>0$ if $2\tau^2-a\tau\kappa\neq0$ by Lemma~\ref{lem:intersectionlimit}. The classification theorem \cite[Thm.~3.1]{kaese} and Lemma~\ref{lem:minimal_helicoid} imply that this surface is a minimal spherical helicoid ($2\tau^2-a\tau\kappa=0$) or a sphere type surface ($2\tau^2-a\tau\kappa\neq0$). In both cases the maximal height $\hmax$ coincides with the left hand side of~\eqref{eq:intersectionlimit}, as the proof of Lemma~\ref{lem:intersectionlimit} implies. Therefore, $\hmax(\Hlimit,\Jlimit)=\frac{\pi}{2}\abs{a}$.
\end{proof}

\begin{lem}\label{lem:limit_infinity}
	The tube family $\Tfamily$ converges in distance to the geodesic $c_a(s)=(\rho_a,s,as)$ from Lemma~\ref{lem:group_properties} as $\mu\to\infty$.
\end{lem}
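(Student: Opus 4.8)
The plan is to pass to the quotient $\MKT/G_a$, which (the action being free and proper) is a smooth Riemannian manifold, and to show that the closed profile curve of $T_a(\mu)$ collapses as $\mu\to\infty$ onto the point $[c_a]$ representing the orbit $c_a$. Since $\pi_{G_a}\colon\MKT\to\MKT/G_a$ is a Riemannian submersion and $c_a=\pi_{G_a}^{-1}([c_a])$ is a union of fibres, one has $\operatorname{dist}(p,c_a)=\operatorname{dist}_{\MKT/G_a}([p],[c_a])$ for every $p$; combined with the $L_{a,s}$-invariance of $T_a(\mu)$ this shows that \emph{both} one-sided Hausdorff distances between $T_a(\mu)$ and $c_a$ equal the Hausdorff distance in the quotient between the profile curve of $T_a(\mu)$ and the point $[c_a]$. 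So it suffices to prove that this profile curve converges to $[c_a]$.

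First I would record the asymptotics of the parameters along the family. Because $\Theta_a\subseteq\Xi_a^0$, for every $\mu$ the pair $(H,J)=(\Htube(\mu),\Jtube(\mu))$ lies between $\Jone(H)$ and $\Jtwo(H)$, and by \eqref{eq:definition_J1J2} we have $\abs{\Jtwo(H)-\Jone(H)}=O(1/H)$. As $\mu\to\infty$ we know $\Htube(\mu)\to\infty$, hence $\Jtube(\mu)\to-\infty$, and since $\Jone(H)/H$ is a constant and $\abs{\Jtube-\Jone}/H=O(1/H^2)$,
\[
\frac{\Jtube(\mu)}{\Htube(\mu)}\longrightarrow\frac{2(2a\tau-1)}{\kappa-4\tau^2}
\]
(for $\kappa=0$ one uses the corresponding limiting expressions, cf.\ the remarks after \eqref{eq:energy}).

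Next I would control the radius uniformly in $\sigma$. From \eqref{eq:radius_notation}, $r_\pm(H,J)=\pm\arct(2H)+\arcs\!\big(\tfrac{\kappa J+2H}{\sqrt{4H^2+\kappa}}\big)$; since $\arct$ is the inverse of $\ct=\cs/\sn$ and $\ct(r)\to+\infty$ as $r\to0^+$ we have $\arct(2H)\to0$ as $H\to\infty$, while by the previous step the argument of $\arcs$ tends to $\tfrac{\kappa(2a\tau-1)}{\kappa-4\tau^2}+1=\cs(\rho_a)$, so $r_\pm(\Htube(\mu),\Jtube(\mu))\to\arcs(\cs(\rho_a))=\rho_a$ with $\rho_a$ as in Lemma~\ref{lem:group_properties}. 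Since $r(\sigma)\in[r_-,r_+]$ always, this yields $r(\sigma;\Htube(\mu),\Jtube(\mu))\to\rho_a$ uniformly in $\sigma$; in particular, for $\mu$ large $r$ stays in a compact subinterval of $I_\kappa$ away from its endpoints, so $\sn(r)\ge\tfrac{1}{2}\sn(\rho_a)>0$ and $\ct(r),\cs(r),\sns(r),\sns(r/2)$ are uniformly bounded.

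Then I would estimate the height oscillation. The denominator of the second line of \eqref{ode_sigma} equals $\sn(r)\,\sigma'$ with $\sigma'=2H-\ct(r)\sin\sigma\ge H$ for $\mu$ large, while its numerator $\sqrt{\sns(r)+(4\tau\sns(r/2)-a)^2}\,\abs{\sin\sigma}$ is uniformly bounded; hence $\abs{\tfrac{dh}{d\sigma}}=O(1/\Htube(\mu))$ uniformly on $[\tfrac{\pi}{2},\tfrac{5\pi}{2}]$. With the normalisation $h(\tfrac{\pi}{2})=0$, integration gives $\abs{h(\sigma)}=O(1/\Htube(\mu))$, so the closed profile curve of $T_a(\mu)$ lies in a box $[r_-,r_+]\times[\min h,\max h]$ about $(\rho_a,0)$ whose diameter tends to $0$. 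As the quotient metric is smooth and nondegenerate near $\{r=\rho_a\}$, these curves converge in Hausdorff distance to $[c_a]$ (after the usual vertical normalisation identifying that point with $(\rho_a,0)$), and by the first paragraph this is precisely $T_a(\mu)\to c_a$ in distance. The argument is essentially a squeeze; since $\Jtube$ has no closed form, its asymptotics must be extracted solely from $\Theta_a\subseteq\Xi_a^0$, and the one genuine check is that the resulting limit of $r_\pm$ is exactly $\rho_a$ — the remaining care is in keeping all estimates uniform in $\sigma$ and in treating the degenerate $\kappa=0$ formulas via the $\kappa\to0$ limit.
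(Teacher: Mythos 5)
Your argument is correct and follows essentially the same route as the paper: sandwich $\Jtube(\mu)$ between $\Jone$ and $\Jtwo$ to get $r_\pm(\Htube(\mu),\Jtube(\mu))\to\rho_a$, then show $\abs{\tfrac{dh}{d\sigma}}=\mathcal{O}(1/\Htube(\mu))$ uniformly so the profile curve collapses to $(\rho_a,0)$. The only (harmless) cosmetic differences are that you bound $\tfrac{dh}{d\sigma}$ directly from \eqref{ode_sigma} rather than from the explicit formula \eqref{eq:height_function}, and that you spell out the Riemannian-submersion step identifying distance to the orbit $c_a$ with distance in the quotient, which the paper leaves implicit.
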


\begin{proof}
	By construction of the tube family we have $\Htube(\mu)\to\infty$ for $\mu\to\infty$. As $H$ tends to infinity, the limit of the minimal and maximal radius $r_\pm(H,J)$ (see \eqref{eq:radius_notation}) at energy levels $\Jone(H)$ and $\Jtwo(H)$ is given by
	\begin{equation*}
		\limH r_\pm(H,\Jone(H)) =\limH r_\pm(H,\Jtwo(H))
		=\arcs\left(\dfrac{\kappa}{\kappa-4\tau^2}\,(2a\tau-1)+1\right)=\rho_a.
	\end{equation*}	
	
	Since $\Jtube(\mu)$ always lies between $\Jone(\Htube(\mu))$ and $\Jtwo(\Htube(\mu))$ and $r_\pm(H,J)$ is monotonic in $J$, we can conclude
	\begin{equation*}
		\lim\limits_{\mu\to\infty} r_\pm(\Htube(\mu),\Jtube(\mu))=\rho_a.
	\end{equation*}	
	
	We can further conclude from $\Jone(\Htube(\mu))\leq\Jtube(\mu)\leq\Jtwo(\Htube(\mu))$ and the explicit expression \eqref{eq:energy} for $\Jone$ and $\Jtwo$ that $\Jtube(\mu)\sim\alpha\Htube(\mu)$ for some constant $\alpha<0$ as $\mu\to\infty$, i.e., $\Jtube\in\mathcal{O}(H)$. This implies that the numerator of $\frac{dh}{d\sigma}$ from \eqref{eq:height_function} is in $\mathcal{O}(H^2)$. On the other hand, the denominator of $\frac{dh}{d\sigma}$ is in $\mathcal{O}(H^3)$ by the same arguments. Therefore, $\frac{dh}{d\sigma}\in\mathcal{O}\left(\frac{1}{H}\right)$ for all $\sigma\in\left[\frac{\pi}{2},\frac{3\pi}{2}\right]$, and so $\frac{dh}{d\sigma}\to0$ as $H\to\infty$.
	
	Altogether, the family of profile curves $\lbrace\gamma_{a,\mu}\rbrace$ converges to the point $(\rho_a,0)\in\EKT/G_a$, and thus the family of tubes $\Tfamily$ converges to the curve $c_a(s)=L_{a,s}(\rho_a,0)=(\rho_a,s,as)$, which is a geodesic by Lemma~\ref{lem:group_properties}.
\end{proof}

Tubes with $2\tau^2-a\tau\kappa\neq0$ are characterized by a lower symmetry compared to tubes with $2\tau^2-a\tau\kappa=0$. Figure~\ref{fig:tubes_symmetry} shows two examples. This is particularly special in so far as these are the first examples of CMC tubes with lower symmetry.

\begin{figure}[]
	\centering
	\footnotesize
	\def\svgwidth{0.6\textwidth}\executeiffilenewer{tubes_symmetry.svg}{tubes_symmetry.pdf}{inkscape -z -D --file=tubes_symmetry.svg 	--export-pdf=tubes_symmetry.pdf --export-latex}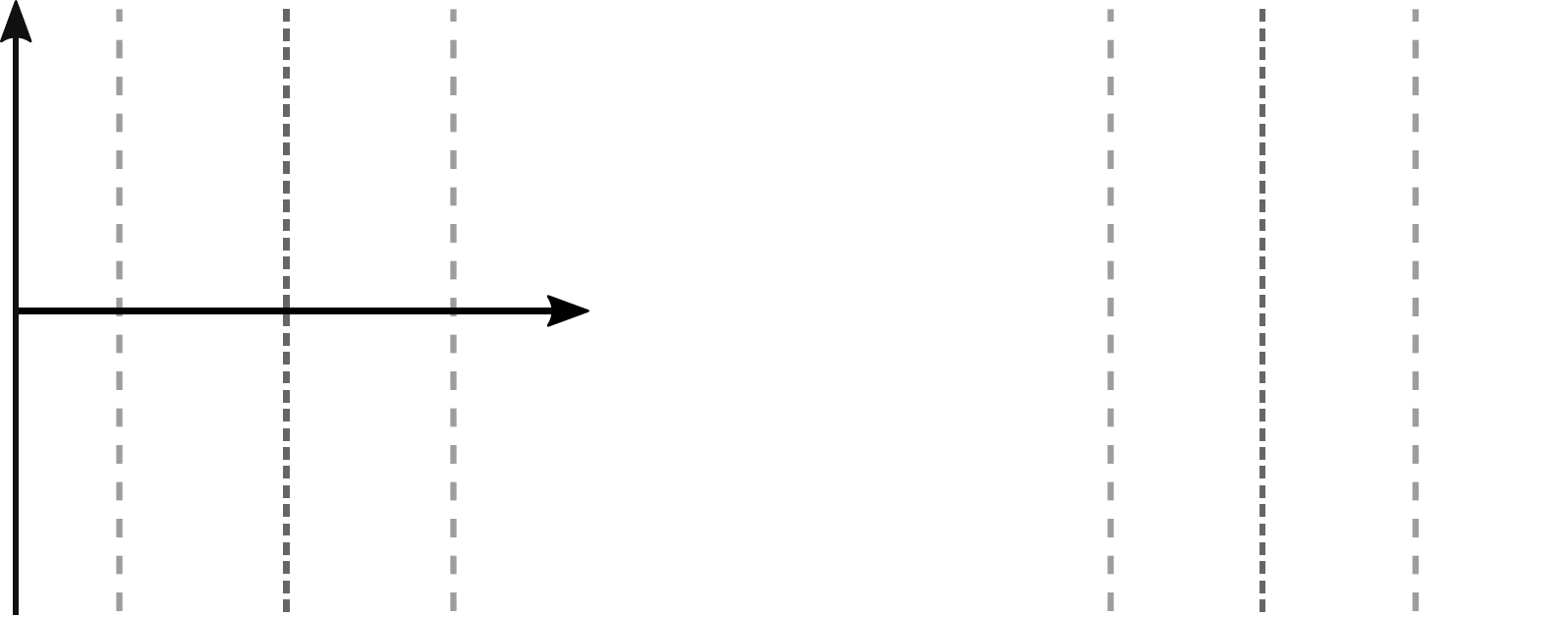
	\caption{Examples of profile curves of screw motion tubes. Both curves are symmetric under reflection at the line $\lbrace h=0\rbrace$, but only the right curve is also symmetric under reflection at the line $\lbrace r=\overline{r}\rbrace$.}
	\label{fig:tubes_symmetry}
\end{figure}

\begin{prop}\label{prop:dihedral_symmetry}
	The profile curve of a screw motion tube with pitch $a$ has dihedral symmetry of order $4$ if and only if $2\tau^2-a\tau\kappa=0$. That is the curve is invariant under reflection at two orthogonal lines. If $2\tau^2-a\tau\kappa\neq0$, the profile curve only has dihedral symmetry of order $2$.
\end{prop}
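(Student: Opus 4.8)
The plan is to parameterize the profile curve of the tube $\Sigma_a(H,J)$ by the angle $\sigma\in\R/2\pi\Z$ as $\gamma(\sigma)=(r(\sigma),h(\sigma))$, with $r$ given explicitly by \eqref{eq:radius_sigma} and $h$ by integrating \eqref{eq:height_function}; write $\Gamma=\gamma(\R/2\pi\Z)$, a simple closed curve because $\delta_a(H,J)=0$. Both \eqref{eq:radius_sigma} and \eqref{eq:height_function} depend on $\sigma$ only through $\sin\sigma$, so $\sin(\pi-\sigma)=\sin\sigma$ gives $r(\pi-\sigma)=r(\sigma)$ and, using $h(\tfrac{\pi}{2})=0$, $h(\pi-\sigma)=-h(\sigma)$ — this is the reflection symmetry in $\{h=0\}$ already used before Definition~\ref{def:closingdefect} — so $\Gamma$ is always invariant under $\mathcal R_h\colon(r,h)\mapsto(r,-h)$ and every tube has dihedral symmetry of order at least $2$. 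The reformulation I will use is this: a curve invariant under two orthogonal reflections is invariant under their product, a rotation by $\pi$; conversely, the centre of any rotation by $\pi$ preserving $\Gamma$ is the centroid of $\Gamma$, which lies on $\{h=0\}$ by $\mathcal R_h$-symmetry, so composing that rotation with $\mathcal R_h$ gives a reflection at a vertical line and $\Gamma$ then carries two orthogonal reflections. Hence \emph{$\Gamma$ has dihedral symmetry of order $4$ if and only if $\Gamma$ is invariant under a rotation by~$\pi$.}

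For ``$2\tau^2-a\tau\kappa=0\Rightarrow$ order $4$'': in this case the tube energy is $J=-\tfrac{2H}{\kappa}$ (Section~\ref{sec:moduli_space}), so $\kappa J+2H=0$; substituting $\kappa J+2H=0$ and $2\tau^2-a\tau\kappa=0$ into \eqref{eq:height_function_f} one finds $C_4=C_5=0$ (indeed $C_4=8\tau^2-4a\tau\kappa$, and once $J=-\tfrac{2H}{\kappa}$ also $C_5$ is a multiple of $2\tau^2-a\tau\kappa$). Then $f(\sigma)$ depends only on $\sin^2\sigma$, so $\tfrac{dh}{d\sigma}$ is an odd function of $\sin\sigma$; hence $\tfrac{d}{d\sigma}h(\sigma+\pi)=-\tfrac{dh}{d\sigma}(\sigma)$, and with $h(\tfrac{3\pi}{2})=0$ this integrates to $h(\sigma+\pi)=-h(\sigma)$. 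Also $r(\sigma)+r(\sigma+\pi)=\arct(\tfrac{2H}{\sin\sigma})+\arct(-\tfrac{2H}{\sin\sigma})+2\arcs(0)$ is independent of $\sigma$ (the $\arct$-terms sum to a constant and $\arcs(0)$ is constant), say $2\bar r$. Therefore $\gamma(\sigma+\pi)=(2\bar r-r(\sigma),-h(\sigma))$ is the image of $\gamma(\sigma)$ under the rotation by $\pi$ about $(\bar r,0)$; so $\Gamma$ is invariant under that rotation and, by the reformulation, has dihedral symmetry of order $4$.

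For ``order $4\Rightarrow 2\tau^2-a\tau\kappa=0$'' — equivalently, that $\Gamma$ has dihedral symmetry of order exactly $2$ when $2\tau^2-a\tau\kappa\neq0$ — suppose $\Gamma$ is invariant under a rotation $\rho$ by $\pi$, necessarily about a point $(\bar r,0)$ as above, and write $\rho\circ\gamma=\gamma\circ\psi$ for the induced homeomorphism $\psi$ of $\R/2\pi\Z$. Then $\psi$ is orientation preserving (as $\rho$ is) and $\psi^2=\mathrm{id}$ (as $\rho^2=\mathrm{id}$ and $\gamma$ is injective); an orientation-preserving involution of the circle is the identity or $\sigma\mapsto\sigma+\pi$, and the identity would force $\Gamma$ to be a point, so $\psi(\sigma)=\sigma+\pi$. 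Comparing $h$-coordinates in $\rho(\gamma(\sigma))=\gamma(\sigma+\pi)$ gives $h(\sigma+\pi)=-h(\sigma)$ for all $\sigma$; differentiating and substituting $\sin(\sigma+\pi)=-\sin\sigma$ in \eqref{eq:height_function}, then cancelling the common positive factor and $\sin\sigma$ (legitimate on the dense set $\{\sin\sigma\neq0\}$, using $\sin^2\sigma-\kappa J^2-4HJ=\sin^2\sigma-J(\kappa J+4H)>0$ on $\Xi$), yields $f(\sigma+\pi)=f(\sigma)$, i.e.\ $(C_4\sin^2\sigma+C_5)\sqrt{\sin^2\sigma-\kappa J^2-4HJ}\,\sin\sigma\equiv0$, hence $C_4\sin^2\sigma+C_5\equiv0$, hence $C_4=C_5=0$. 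Since $C_4=8\tau^2-4a\tau\kappa=4(2\tau^2-a\tau\kappa)$, this contradicts $2\tau^2-a\tau\kappa\neq0$; combined with the first paragraph, $\Gamma$ has dihedral symmetry of order exactly $2$.

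The subtle point is that the claimed symmetry lives in the \emph{Euclidean} $(r,h)$-plane in which the profile curve is drawn, whereas \eqref{ode} is the unit-speed equation for the \emph{warped} quotient metric; a Euclidean reflection of the $(r,h)$-plane is not a quotient isometry, so one cannot reflect solutions of \eqref{ode} directly, and the argument must go through the explicit formulas \eqref{eq:radius_sigma}, \eqref{eq:height_function}. The genuinely delicate step is then the identification $\psi(\sigma)=\sigma+\pi$ in the harder direction: it is not automatic but follows from the rigidity of orientation-preserving involutions of $S^1$ together with the a priori presence of $\mathcal R_h$. The bookkeeping of the generalized trigonometric conventions is avoided by using only that $\arct(y)+\arct(-y)$ and $\arcs(0)$ are constants and that \eqref{eq:radius_sigma}, \eqref{eq:height_function} are functions of $\sin\sigma$.
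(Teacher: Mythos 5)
Your reduction to ``order $4$ $\Leftrightarrow$ invariance under a rotation by $\pi$ about the centroid'' is sound, and your proof of the implication $2\tau^2-a\tau\kappa=0\Rightarrow$ order $4$ is correct; it is essentially the paper's argument in rotated form (the paper phrases it as symmetry of $\tfrac{dh}{d\sigma}$ under reflection at $\{r=\tfrac{\pi}{2\sqrt{\kappa}}\}$, you phrase it as $C_4=C_5=0$, and both come down to $f$ depending only on $\sin^2\sigma$ once $\kappa J+2H=0$). The converse, however, has a genuine gap at the step you yourself flag as delicate. The claim ``an orientation-preserving involution of the circle is the identity or $\sigma\mapsto\sigma+\pi$'' is false for homeomorphisms: any conjugate $g\circ R_\pi\circ g^{-1}$ by an orientation-preserving homeomorphism $g$ is a free, orientation-preserving involution, and it equals $R_\pi$ only when $g$ commutes with $R_\pi$. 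Your $\psi=\gamma^{-1}\circ\rho\circ\gamma$ is exactly such a conjugate; since the Euclidean half-turn $\rho$ is not an isometry of the quotient metric, nothing forces $\gamma$ to intertwine $\rho$ with the standard half-turn of the parameter circle. Indeed, $\psi=R_\pi$ is equivalent to $r(\sigma)+r(\sigma+\pi)\equiv 2\bar r$, and by \eqref{eq:radius_sigma} the $\arcs$-contribution to $r(\sigma)+r(\sigma+\pi)$ is $2\arcs\!\left(\tfrac{\kappa J+2H}{\sqrt{4H^2+\kappa\sin^2\sigma}}\right)$, which is not constant in $\sigma$ when $\kappa J+2H\neq0$ and $\kappa\neq 0$ --- so in the very regime you want to exclude, assuming $\psi=R_\pi$ essentially begs the question. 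Without it you only know $h(\psi(\sigma))=-h(\sigma)$ and $r(\psi(\sigma))=2\bar r-r(\sigma)$ for an unknown increasing involution $\psi$, and the differentiation leading to $f(\sigma+\pi)=f(\sigma)$ does not apply.

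The gap is fillable, and the paper closes it with a more elementary observation that uses only points where $\psi$ \emph{is} forced. A half-turn preserving $\Gamma$ maps the range of the radius $[r_-,r_+]$ onto itself, so its centre has $\bar r=\tfrac12(r_++r_-)$; it also maps the unique highest point $\gamma(\pi)$ of $\Gamma$ (unique because $\tfrac{dh}{d\sigma}$ has the sign of $\sin\sigma$) to the unique lowest point $\gamma(2\pi)$. Since $r$ depends on $\sigma$ only through $\sin\sigma$, we have $r(2\pi)=r(\pi)$, and comparing $r$-coordinates gives $2\bar r-r(\pi)=r(2\pi)=r(\pi)$, i.e.\ $r(\pi)=\tfrac12(r_++r_-)$. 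The explicit formula \eqref{eq:radius_sigma} shows this never holds for $\kappa\leq0$, and for $\kappa>0$ it holds only when $J=-\tfrac{2H}{\kappa}$, which lies in $[\Jone(H),\Jtwo(H)]$ (where the energy of a tube must lie) only if $2\tau^2-a\tau\kappa=0$. Substituting this extremal-point argument for your circle-involution lemma makes your proof complete and brings it in line with the paper's.
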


\begin{proof}
	Let $\gamma_a(\sigma;H,J)=(r(\sigma;H,J),h_a(\sigma;H,J))\subseteq\MKT/G_a$ be a profile curve of a tube with initial conditions $r(\frac{\pi}{2};H,J)=r_+(H,J)$ and $h_a(\frac{\pi}{2};H,J)=0$. Then $\gamma_a$ is symmetric under reflection at the line $\lbrace h=0\rbrace$ (see Section~\ref{sec:screwmotion_surfaces}), so the profile curve admits a dihedral symmetry of order at least $2$.
	
	If $2\tau^2-a\tau\kappa=0$, then the energy of any tube is $J=-\frac{2H}{\kappa}$ and the derivative of the height function $\frac{dh}{d\sigma}$, see~\eqref{eq:height_function}, is symmetric under reflection at the line $\lbrace r=\frac{\pi}{2\sqrt{\kappa}}\rbrace$, which is orthogonal to the line $\lbrace h=0\rbrace$ in $\MKT/G_a$. Thus, the profile curve has dihedral symmetry of order $4$. That is, it is invariant under the dihedral group consisting of two reflections.
	
	Now suppose $2\tau^2-a\tau\kappa\neq0$. The maximal height $\hmax(H,J)$ of the profile curve at $\sigma=\pi$ is only attained with $r(\pi;H,J)$. But for $\kappa\leq0$ it is always $r(\pi;H,J)\neq\frac{1}{2}(r_+(H,J)+r_-(H,J))$, see the explicit formula \eqref{eq:radius_sigma}. For $\kappa>0$ it is $r(\pi;H,J)=\frac{1}{2}(r_+(H,J)+r_-(H,J))$ only if $J=-\frac{2H}{\kappa}$, which does not lie in the interval between $\Jone$ and $\Jtwo$. Therefore, the maximal height is not attained with the mean radius $\overline{r}=\frac{1}{2}(r_+-r_-)$ of the tube. Hence, the profile curve cannot be symmetric under reflection at any line $\lbrace r=\text{const.}\rbrace$ and therefore does not admit a dihedral symmetry of order $4$.
\end{proof}

The symmetry of the profile curve is a consequence of the symmetry of ambient space. Horizontal geodesics in $\EKT$ admit rotations by an angle $\pi$, which induces reflections at any line $\lbrace h=h_0\rbrace$ in the orbit space. If $\lbrace h=h_0\rbrace$ meets the profile curve orthogonal, the profile curve is mapped onto itself under reflection at that line. Thus, the profile curve is invariant under reflection at that line. In particular, we obtain such a symmetry for the profile curves of screw motion tubes.

The additional symmetry of profile curves of screw motion tubes with $2\tau^2-a\tau\kappa=0$ comes from the base manifold $\s^2(\kappa)$. The equator line of $\s^2(\kappa)$ admits rotations by an angle $\pi$ as a horizontal geodesic of $\EKT$. This is again true for any surface in $\E(\kappa>0,\tau)$ and yields the conjugacy of the groups $G_a$ and $G_{\tilde a}$ (see Lemma~\ref{lem:group_properties}) and the previously mentioned one-to-one correspondence between $\Sigma_a(H,J)$ and $\Sigma_{\tilde a}(H,\tilde J)$ (see Section~\ref{sec:screwmotion_surfaces}). However, only the profile curves of screw motion tubes with $2\tau^2-a\tau\kappa=0$ are centered at the equator, meaning that the mean radius equals the equatorial radius $\frac{1}{2}(r_++r_-)=\frac{\pi}{2\sqrt{\kappa}}$. So only these profile curves are mapped onto themselves by reflection at the line $\lbrace\frac{\pi}{2\sqrt{\kappa}}\rbrace\times\R$ in the orbit space, and therefore admit an additional symmetry in contrast to the case $2\tau^2-a\tau\kappa\neq0$.

This symmetry result provides a geometric interpretation of the quantity $2\tau^2-a\tau\kappa$. It measures somehow the amount of asymmetry away from the more symmetric cases of $\SKR$ (with arbitrary pitch) and $\Berger$ with horizontal pitch $a=\frac{2\tau}{\kappa}$.

\subsection{A uniqueness result in Heisenberg space}\

In the special case of Heisenberg space $\Nildrei=\E(0,\tau), \tau>0$ we derive a uniqueness result for tubes with large mean curvature:

\begin{theo}\label{theo:uniqeness_nil}
	Consider $\Nildrei$ and suppose the pitch $a$ is admissible. Then there exists a value $\Hbound$, such that there exists exactly one tube with mean curvature $H$ around the geodesic $c_a$ for each $H>\Hbound$.
\end{theo}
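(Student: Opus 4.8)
The plan is to read off uniqueness from the closing defect $\delta_a$ of Definition~\ref{def:closingdefect}. In $\Nildrei=\E(0,\tau)$ we have $\kappa=0$, $\Hcrit=0$, and admissibility of $a$ means $a\tau>\tfrac12$. By \cite[Cor.~3.8]{kaese} the energy of any tube along $c_a$ of mean curvature $H$ lies in $[\Jone(H),\Jtwo(H)]$, and from \eqref{eq:definition_J1J2} with $\kappa=0$ one computes $\Jone(H)=\tfrac{(1-2a\tau)H}{2\tau^{2}}$ and $\Jtwo(H)=\Jone(H)+\tfrac1{2H}$, so this is an interval of length $\tfrac1{2H}$. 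I would reparametrize it by $J=\Jone(H)+\tfrac{s}{2H}$, $s\in[0,1]$, and put $D(H,s):=\delta_a\big(H,\Jone(H)+\tfrac{s}{2H}\big)$; then for $H$ large enough that $[\Jone(H),\Jtwo(H)]\subset\{J<0\}$ the tubes of mean curvature $H$ along $c_a$ are exactly the surfaces $\Sigma_a\big(H,\Jone(H)+\tfrac{s}{2H}\big)$ with $D(H,s)=0$, $s\in[0,1]$, each counted up to vertical translation. The goal is to produce $\Hbound>0$ so that for every $H>\Hbound$ the function $s\mapsto D(H,s)$ has a single, transverse zero in $[0,1]$.

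The core is an asymptotic expansion of $D$ as $H\to\infty$, uniform in $s\in[0,1]$. Substituting $\kappa=0$ and $J=\Jone(H)+\tfrac{s}{2H}$ into the coefficient functions $C_i$ from \eqref{eq:height_function_f} produces substantial cancellation; writing $\beta:=\tfrac{2a\tau-1}{2\tau^{2}}>0$ and $P:=\tfrac{4(4a\tau-1)}{\tau^{2}}>0$, one gets in particular $C_1=C_4=8\tau^{2}$, $C_4\sin^2\!\sigma+C_5=8\tau^{2}(\sin^2\!\sigma-s)$, $C_3=P\,H^{4}+4\tau^{2}s^{2}$, and $\sin^2\!\sigma-\kappa J^{2}-4HJ=4\beta H^{2}+\sin^2\!\sigma-2s$. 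Hence, for $H$ large, $f(\sigma)=P\,H^{4}+O(H^{2})$ and $\sin^2\!\sigma-4HJ=4\beta H^{2}+O(1)$ are positive and bounded away from $0$ and $\infty$ uniformly in $(\sigma,s)\in[\tfrac\pi2,\tfrac{3\pi}2]\times[0,1]$, so the integrand $\tfrac{dh}{d\sigma}$ in \eqref{eq:height_function} is smooth and bounded there, and already $\tfrac{dh}{d\sigma}=\tfrac{\sqrt P\sin\sigma}{8\sqrt\beta\,H}+O(H^{-3})$ with \emph{no} $H^{-2}$ term. Taking the Taylor expansions of $\sqrt f$ and $(\sin^2\!\sigma-4HJ)^{-1/2}$ two further orders, I expect
\[
  \frac{dh}{d\sigma}\Big(\sigma;a,H,\Jone(H)+\tfrac{s}{2H}\Big)
  =\frac{\sqrt P\,\sin\sigma}{8\sqrt\beta\,H}+\frac{B_3(\sigma,s)}{H^{3}}+\frac{A'\,(\sin^2\!\sigma-s)\sin^2\!\sigma}{H^{4}}+O(H^{-5}),
\]
uniformly in $(\sigma,s)$, with a constant $A'>0$ depending only on $a,\tau$ and $B_3(\cdot,s)$ a linear combination of $\sin\sigma$ and $\sin^3\!\sigma$. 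Since $\int_{\pi/2}^{3\pi/2}\sin\sigma\,d\sigma=\int_{\pi/2}^{3\pi/2}\sin^3\!\sigma\,d\sigma=0$, whereas $\int_{\pi/2}^{3\pi/2}\sin^2\!\sigma\,d\sigma=\tfrac\pi2$ and $\int_{\pi/2}^{3\pi/2}\sin^4\!\sigma\,d\sigma=\tfrac{3\pi}8$, integrating removes the first two orders and gives $H^{4}D(H,s)\to K\,(3-4s)$ as $H\to\infty$, uniformly in $s\in[0,1]$, with $K:=\tfrac{\pi}{8}A'>0$; and since the integrand depends smoothly on $s$ for $H$ large, the same computation for $\partial_s\tfrac{dh}{d\sigma}$ yields $H^{4}\partial_sD(H,s)\to-4K$ uniformly in $s\in[0,1]$.

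Granting this, the conclusion is immediate. The limit $K(3-4s)$ is strictly decreasing on $[0,1]$ and vanishes only at the interior point $s=\tfrac34$, transversally. By the uniform convergence of $H^{4}D$ and $H^{4}\partial_sD$ there is $\Hbound>0$ such that, for all $H>\Hbound$, one has $D(H,0)>0>D(H,1)$ and $\partial_s D(H,s)<0$ for every $s\in[0,1]$; hence $s\mapsto D(H,s)$ has exactly one zero in $[0,1]$. Translating back, for each $H>\Hbound$ there is a unique energy $J\in[\Jone(H),\Jtwo(H)]$ with $\delta_a(H,J)=0$, i.e.\ exactly one screw motion tube along $c_a$ of mean curvature $H$ (up to isometry). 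As a byproduct this reproves existence of tubes for $H>\Hbound$, independently of \cite[Thm.~3.6]{kaese}, and it is consistent with the separation property of Lemma~\ref{lem:separationproperty}: for $H$ large, $\Sigma_a\big(H,\Jone(H)+\tfrac{s}{2H}\big)$ is a nodoid of type I for $s<\tfrac34$ and of type II for $s>\tfrac34$.

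The main obstacle is the second paragraph. It relies on two features which are short to state but must be checked carefully: the exact cancellations in the $C_i$ under the \emph{affine} reparametrization $J=\Jone(H)+\tfrac{s}{2H}$ (without them $C_3$ or $C_5$ would leak spurious lower-order terms and the analysis would change), and the parity fact that $\int_{\pi/2}^{3\pi/2}$ annihilates odd powers of $\sin\sigma$; together these are what force the expansion all the way to order $H^{-4}$ before a non-trivial $s$-dependence survives integration. One must also justify interchanging the $1/H$-expansion with the integral and differentiating under the integral sign, uniformly in $s$; this is routine once $f$ and $\sin^2\!\sigma-4HJ$ are known to be uniformly bounded away from $0$ and $\infty$ on $[\tfrac\pi2,\tfrac{3\pi}2]\times[0,1]$ for $H$ large, but it has to be carried out. (The $1$-parameter group of homotheties of $\Nildrei$, under which $(a,H)\mapsto(a\lambda^{2},H/\lambda)$ preserves the tube condition, gives an alternative perspective, but it trades ``$H$ large'' for ``pitch $\to\infty$'' --- the regime in which $c_a$ degenerates to a horizontal geodesic --- and so does not obviously simplify the analysis.)
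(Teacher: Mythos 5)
Your proposal is correct, and it takes a genuinely different route from the paper. The paper proves uniqueness by establishing monotonicity of $J\mapsto\delta_a(H,J)$ on all of $(\Jone(H),\Jtwo(H))$: it pairs angles $\hat\sigma,\tilde\sigma$ with $\sin\hat\sigma=-\sin\tilde\sigma$, reduces the pointwise comparison of $\partial_J(\tfrac{dh}{d\sigma})$ to the sign of an affine function $\psi(x)=D_1x+D_2$ of $x=\sin^2\sigma$, and then shows $D_1>0$ always and $D_2>0$ for $H$ beyond an explicit root of a cubic in $H^2$; existence is imported from \cite[Thm.~3.6]{kaese}. You instead rescale the energy window by $J=\Jone(H)+\tfrac{s}{2H}$ and expand the closing defect asymptotically, showing $H^4\delta_a\to K(3-4s)$ with a single transverse zero. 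I checked your key cancellations: with $\beta=\tfrac{2a\tau-1}{2\tau^2}$ one indeed gets $C_4\sin^2\!\sigma+C_5=8\tau^2(\sin^2\!\sigma-s)$, $C_2=8H^2(2a\tau-1)-16\tau^2s$, $C_3=PH^4+4\tau^2s^2$ with $P=16a^2-16\tau^2\beta^2=\tfrac{4(4a\tau-1)}{\tau^2}>0$, and $\sin^2\!\sigma-4HJ=4\beta H^2+\sin^2\!\sigma-2s$; the orders $H^{-1},H^{-3}$ of $\tfrac{dh}{d\sigma}$ are odd in $\sin\sigma$ (there is no $H^{-2}$ term, since $u\sqrt f$ only carries orders $H^2,H^0,H^{-1},\dots$), and the first even contribution is $\tfrac{\tau^2(\sin^2\!\sigma-s)\sin^2\!\sigma}{\sqrt P\,H^4}$, giving $K=\tfrac{\pi\tau^2}{8\sqrt P}>0$; the uniformity claims are unproblematic because $f\ge\tfrac P2H^4$ and $\sin^2\!\sigma-4HJ\ge4\beta H^2-2$ for $H$ large. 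What each approach buys: the paper's argument yields an explicit, computable $\Hbound$ (a root of a cubic, numerically close to $\Hex$) and reuses machinery that partially generalizes to $\kappa\neq0$, while yours delivers existence, uniqueness, and transversality in one stroke, plus the asymptotic location $J\approx\Jone(H)+\tfrac{3}{8H}$ of the tube energy and the type-I/type-II transition at $s=\tfrac34$ — but only an ineffective $\Hbound$, and the expansion would have to be pushed further (or redone) in the cases $\kappa\neq0$ where the cancellations are less clean.
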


\begin{proof}
	The existence of tubes with mean curvature $H>\Hex$ is guaranteed by \cite[Thm.~3.6]{kaese}. We obtain uniqueness if the function
	\begin{equation*}
		J\mapsto\delta_a(H,J)
	\end{equation*}
	is strictly monotonic for $J\in(\Jone(H),\Jtwo(H))$. We have no explicit formula at hand to prove monotonicity directly or to find a suitable estimate for the $J$-derivative of that function. Instead, we follow the idea from \cite[Lem.~3.7]{kaese} and compare $\frac{dh}{d\sigma}(\tilde\sigma)$ and $\frac{dh}{d\sigma}(\hat\sigma)$ for suitable pairs $(\tilde\sigma,\hat\sigma)$. This will lead to an auxiliary function $\psi(x)$ such that $\psi(x)\neq0$ is sufficient (but not necessary) for $\partial_J\delta_a(H,J)\neq0$. And so it will be sufficient for uniqueness.
	
	We divide our proof in two main steps. In step~1 we derive the aforementioned auxiliary function $\psi$. In step~2 we prove that this auxiliary function indeed satisfies $\psi(x)\neq0$.
	
	\underline{Step 1:} The $J$-derivative of the integrand $\dfrac{dh}{d\sigma}(\sigma;a,H,J)$ (see \eqref{eq:height_function}) is
	\begin{equation}\label{eq:height_Jderivative}
		\partial_J\left(\frac{dh}{d\sigma}\right)(\sigma;a,H,J)
		=\frac{\sqrt{g(\sigma;a,H,J)}}{\left(\sin^2\!\sigma-4HJ\right)^{3/2} \sqrt{f(\sigma;a,H,J)}}\,\sin\sigma.
	\end{equation}
	
	Here, $f$ is the function from \eqref{eq:height_function_f} with $\kappa=0$ and $g$ is the function
	\begin{equation*}
		g(\sigma;a,H,J)\coloneqq
		B_1\sin^2\sigma+B_2
		+B_3\sqrt{\sin^2\!\sigma-4HJ}\sin\sigma,
	\end{equation*}
	where $B_i=B_i(a,H,J)$ are the coefficient functions
	\begin{equation*}
		\begin{aligned}
			& B_1=2H\left(1-2a\tau\right) +4\tau^2J, \\
			& B_2=8H\left(a^2H^2-\tau^2J^2\right), \\
			& B_3=2H\left(1-2a\tau\right)+4\tau^2J.
		\end{aligned}
	\end{equation*}
	
	We observe that $\partial_J\left(\frac{dh}{d\sigma}\right)(\sigma)>0$ for $\sigma\in[\frac{\pi}{2},\pi)$ and $\partial_J\left(\frac{dh}{d\sigma}\right)(\sigma)<0$ for $\sigma\in(\pi,\frac{3\pi}{2}]$. For every $\hat\sigma\in[\frac{\pi}{2},\pi)$ there exists exactly one $\tilde\sigma\in(\pi,\frac{3\pi}{2}]$ such that $\sin\hat\sigma=-\sin\tilde\sigma\geq0$ and vice versa. The pointwise condition $-\partial_J\left(\frac{dh}{d\sigma}\right)(\tilde\sigma;a,H,J)<\partial_J\left(\frac{dh}{d\sigma}\right)(\hat\sigma;a,H,J)$ for all pairs $(\tilde\sigma,\hat\sigma)$ is sufficient for $\partial_J\delta_a(H,J)>0$. In the same way $-\partial_J\left(\frac{dh}{d\sigma}\right)(\tilde\sigma;a,H,J)>\partial_J\left(\frac{dh}{d\sigma}\right)(\hat\sigma;a,H,J)$ is sufficient for $\partial_J\delta_a(H,J)<0$. Both conditions are still sufficient if we exclude the pair $(\tilde\sigma,\hat\sigma)=(\frac{3\pi}{2},\frac{\pi}{2})$. The advantages are strict inequalities in the following as $|\sin\sigma|<1$.
	
	For all pairs $(\tilde\sigma,\hat\sigma)$ it is
	\begin{alignat*}{2}
		&& -\partial_J\left(\frac{dh}{d\sigma}\right)(\tilde\sigma) &\gtrless\partial_J\left(\frac{dh}{d\sigma}\right)(\hat\sigma) \\[4mm]
		\stackrel{\eqref{eq:height_Jderivative}}{\Leftrightarrow}&& \hspace{4mm} -\frac{\sqrt{g(\tilde\sigma)}}{\left(\sin^2\!\tilde\sigma-4HJ\right)^{3/2} \sqrt{f(\tilde\sigma)}}\,\sin\tilde\sigma
		&\gtrless \frac{\sqrt{g(\hat\sigma)}}{\left(\sin^2\!\hat\sigma-4HJ\right)^{3/2} \sqrt{f(\hat\sigma)}}\,\sin\hat\sigma \\[5mm]
		\Leftrightarrow&& g(\tilde\sigma) f(\hat\sigma) - g(\hat\sigma) f(\tilde\sigma)
		&\gtrless 0.
	\end{alignat*}
	
	A straightforward computation provides an expression for the last line:
	\begin{equation*}
		g(\tilde\sigma) f(\hat\sigma) - g(\hat\sigma) f(\tilde\sigma)
		=D_1\sin^2\sigma +D_2,
	\end{equation*}
	where $D_i=D_i(a,H,J)$ are the coefficient functions
	\begin{align*}
		& D_1 =32\tau^2H^2\left(2a^2H-J\left(2a\tau-1\right)\right), \\
		& D_2 =32H^2\left(2\tau^4J^3+\tau^2HJ^2\left(2a\tau-1\right)+H^2J\left(1-4a\tau-2a^2\tau^2\right)-a^2H^3\left(2a\tau-1\right)\right).
	\end{align*}
	We define the auxiliary function
	\begin{equation}
		\psi\colon(0,1)\to\R,\quad x\mapsto\psi(x)\coloneqq D_1x+D_2.
	\end{equation}
	
	Then $\psi(x)\neq0$ is indeed sufficient for $\partial_J\delta_a(H,J)\neq0$.
	
	\underline{Step 2:} We note that $D_1(a,H,J)$ is always positive for all $H>0$ if $a$ is admissible and $J\in(\Jone(H),\Jtwo(H))$. The same statement is not true for $D_2(a,H,J)$. However, we will show that there exists $\Hbound$ such that $D_2(a,H,J)>0$ for all $H>\Hbound$.
	
	First of all, the $J$-derivative of $D_2$ is given by
	\begin{equation*}
		\partial_J D_2(a,H,J)= 32H^2\left(6\tau^4J^2 +2\tau^2HJ\left(2a\tau-1\right) +H^2\left(1-4a\tau-2a^2\tau^2\right)\right).
	\end{equation*}
	
	This function is a parabola in $J$ open upwards. Let us denote its zeros by $j^\pm$. Then $[\Jone(H),\Jtwo(H)]\subset(j^-,j^+)$ for $H>\Hex$ using the explicit expression of the zeros $j^\pm$. This implies that $\partial_J D_2<0$ for all $J\in[\Jone(H),\Jtwo(H)]$ and $H>\Hex$.
		
	We can therefore estimate $D_2(a,H,J)\geq D_2(a,H,\Jtwo(H))$, where
	\begin{align*}
		D_2(a&,H,\Jtwo(H)) \\
		&=\frac{8}{\tau^2H} \left[ 2\left(1-6a\tau+8a^2\tau^2\right)H^6 +9\tau^2\left(1-4a\tau\right)H^4 +2\tau^4\left(1-2a\tau\right)H^2 +\tau^6 \right].
	\end{align*}
	
	Consider the terms in brackets. We substitute $y\coloneqq H^2$ and obtain a third-degree polynomial of the form $\alpha y^3+\beta y^2 +\gamma y +\delta$, where the prefactor of the leading order term is
	\begin{equation*}
		\alpha=2\left(1-6a\tau+8a^2\tau^2\right)>0.
	\end{equation*}
	
	Thus, the polynomial tends to $+\infty$ for large $y$. For $D_2(a,H,\Jtwo(H))$ this implies that there exists a value $\Hbound$, such that $D_2(a,H,\Jtwo(H))>0$ for all $H>\Hbound$.	To ensure existence of tubes and correctness of the previous arguments, choose $\Hbound\geq\Hex$. Then $\psi(x)>0$ for all $H>\Hbound$ and $J\in(\Jone(H),\Jtwo(H))$. \qedhere

\end{proof}

\begin{rem}
	The constant $\Hbound$ can be determined explicitly from the proof above as the square root of a zero of a third degree polynomial. A numerical evaluation shows that $\abs{\Hbound-\Hex}$ is small. We therefore obtain uniqueness for nearly all tubes.
\end{rem}

\begin{rem}\label{rem:uniqueness_nil}
	Step~1 of the proof of Theorem~\ref{theo:uniqeness_nil} can be generalized to $\EKT$. Step~2 on the other hand only applies to the special case of $\Nildrei$. For $\kappa\neq0$ the auxiliary function $\psi(x)$ is a second-degree polynomial. Unlike in $\Nildrei$ numerical computations indicate that the summands in the coefficient functions $D_i$ no longer satisfy useful monotonicity properties, which does not allow us to conclude $\psi(x)>0$ or $\psi(x)<0$. However, numerical computations also suggest that the statement remains true.
\end{rem}

\section{Embeddedness Results}
\label{sec:embedding}

For fixed admissible pitch $a$ we consider the family of tubes
\begin{equation*}
	\Tfamily=\left\lbrace T_a(\mu)\colon \mu\in(0,\infty) \right\rbrace
\end{equation*}
from \eqref{eq:def_tubefamily_nonhor}. We want to study embeddedness and foliation properties of this family. In the non-compact spaces $\SKR$, $\Nildrei$, and $\PSLzwei$ the profile curve $\gamma_a(\mu)$ of a tube $T_a(\mu)\in\Tfamily$ is an embedded simple loop. Thus, embeddedness of the tube can only fail when the screw motion causes the tube to intersect itself. Therefore,
\begin{equation}\label{eq:embedding_condition}
	2\hmax(\mu)<2\pi a
\end{equation}
is a necessary and sufficient condition for embeddedness, with the maximal height (see \eqref{eq:height_max})
\begin{equation*}
	\hmax(\mu)\coloneqq\hmax(\Htube(\mu),\Jtube(\mu)).
\end{equation*}

The limit surface $\Sigma_a^{\lim}=\lim\limits_{\mu\to0}T_a(\mu)$ and the geodesic $c_a=\lim\limits_{\mu\to\infty}T_a(\mu)$ (see Lemma~\ref{lem:limit_infinity}) are embedded. This implies that tubes in the family $\Tfamily$ close to $\Sigma_a^{\lim}$ and $c_a$ in $\Tfamily$ are also embedded:

\begin{theo}\label{theo:embedding_noncompact}
    Consider $\SKR$, $\Nildrei$, or $\PSLzwei$ and suppose that the pitch $a$ is admissible. Then there exists a neighborhood of $\mu=0$ and a neighborhood of $\mu=\infty$, such that $T_a(\mu)$ is embedded for all $\mu$ in these neighborhoods.
\end{theo}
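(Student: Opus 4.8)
The plan is to deduce embeddedness from the behaviour of the maximal height $\hmax(\mu)\coloneqq\hmax(\Htube(\mu),\Jtube(\mu))$ at the two ends of the family $\Tfamily$. By the discussion preceding the statement, in each of $\SKR$, $\Nildrei$, $\PSLzwei$ the profile curve of $T_a(\mu)$ is a simple embedded loop, so by \eqref{eq:embedding_condition} the tube $T_a(\mu)$ is embedded if and only if $2\hmax(\mu)<2\pi\abs{a}$, that is $\hmax(\mu)<\pi\abs{a}$. Since $\hmax(H,J)=\int_{\pi/2}^{\pi}\frac{dh}{d\sigma}(\sigma;a,H,J)\,d\sigma$ depends continuously on $(H,J)$ and $(\Htube,\Jtube)$ is a continuous curve, the map $\mu\mapsto\hmax(\mu)$ is continuous on $(0,\infty)$; thus it will be enough to show that $\limsup_{\mu\to0}\hmax(\mu)$ and $\limsup_{\mu\to\infty}\hmax(\mu)$ are both strictly below $\pi\abs{a}$ and then pass to the corresponding sublevel neighbourhoods. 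I first note $\abs{a}>0$: for $\Nildrei$ and $\PSLzwei$ admissibility forces $a\tau>\tfrac12$, while for $\SKR$ the case $a=0$ makes $G_a$ the rotation group about the vertical axis, so that $T_a(\mu)$ is a surface of revolution over an embedded loop disjoint from the axis and hence automatically embedded; so we may assume $a\neq0$.

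For the end $\mu\to0$ I would use Lemma~\ref{lem:limitsurface}, by which the limit surface $\Sigma_a^{\lim}=\lim_{\mu\to0}T_a(\mu)$ satisfies $\hmax(\Hlimit,\Jlimit)=\tfrac{\pi}{2}\abs{a}$, together with the computation carried out in Step~4 of the proof of Lemma~\ref{lem:intersectionlimit}, which shows precisely that $\hmax(\mu)=\int_{\pi/2}^{\pi}\frac{dh}{d\sigma}(\sigma;a,\Htube(\mu),\Jtube(\mu))\,d\sigma\longrightarrow\int_{\pi/2}^{\pi}p_a(\sigma;\Hlimit)\,d\sigma=\tfrac{\pi}{2}\abs{a}$ as $\mu\to0$. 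Because $\tfrac{\pi}{2}\abs{a}<\pi\abs{a}$, this produces a $\mu_1>0$ with $\hmax(\mu)<\pi\abs{a}$, hence $T_a(\mu)$ embedded, for all $\mu\in(0,\mu_1)$.

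For the end $\mu\to\infty$ I would use the proof of Lemma~\ref{lem:limit_infinity}: there $\Htube(\mu)\to\infty$ and $\frac{dh}{d\sigma}(\sigma;a,\Htube(\mu),\Jtube(\mu))\in\mathcal{O}\!\left(1/\Htube(\mu)\right)$ uniformly in $\sigma\in[\tfrac{\pi}{2},\tfrac{3\pi}{2}]$, so $\hmax(\mu)\to0<\pi\abs{a}$, giving a $\mu_2$ with $\hmax(\mu)<\pi\abs{a}$ for all $\mu>\mu_2$. By \eqref{eq:embedding_condition}, $T_a(\mu)$ is then embedded for every $\mu\in(0,\mu_1)\cup(\mu_2,\infty)$, which is the claim.

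The only genuinely delicate point is the convergence $\hmax(\mu)\to\tfrac{\pi}{2}\abs{a}$ as $\mu\to0$: there the profile curve leaves the tube region and degenerates — to a sphere type surface if $2\tau^2-a\tau\kappa\neq0$, to a minimal spherical helicoid if $2\tau^2-a\tau\kappa=0$ — and $\frac{dh}{d\sigma}$ does not converge uniformly in $\sigma$, its pointwise limit jumping at $\sigma=\pi$. However this is exactly the situation already handled in Step~4 of the proof of Lemma~\ref{lem:intersectionlimit}: the integrand stays uniformly bounded (one must keep $H$ away from $\Hcrit$ when $\kappa\leq0$, which is harmless here since $\Hlimit>\Hcrit$ by Lemma~\ref{lem:intersectionlimit}), so limit and integration interchange. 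With that ingredient in place the theorem is a direct combination of Lemmas~\ref{lem:limitsurface}, \ref{lem:limit_infinity} and \ref{lem:intersectionlimit} with the embeddedness criterion \eqref{eq:embedding_condition}.
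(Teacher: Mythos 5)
Your proof is correct and follows essentially the same route as the paper: verify the embeddedness criterion \eqref{eq:embedding_condition} near both ends of the family via $\hmax(\mu)\to\tfrac{\pi}{2}\abs{a}$ as $\mu\to0$ (Lemma~\ref{lem:limitsurface}) and $\hmax(\mu)\to0$ as $\mu\to\infty$ (Lemma~\ref{lem:limit_infinity}), then use continuity of $\hmax$ in $\mu$. Your extra care — separating off $a=0$ in $\SKR$ (where \eqref{eq:embedding_condition} as literally stated would be vacuous but rotational invariance gives embeddedness directly) and justifying the interchange of limit and integral at $\mu\to0$ via Step~4 of Lemma~\ref{lem:intersectionlimit} — only makes explicit what the paper's two-line proof leaves implicit.
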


Figure~\ref{fig:embedded_nil} shows some examples of embedded tubes in $\Nildrei$ and $\PSLzwei$.

\begin{proof}
	The embeddedness condition~\eqref{eq:embedding_condition} is satisfied for small $\mu$, because $\hmax(\mu)\to\frac{\pi}{2}\abs{a}$ as $\mu\to0$ by Lemma~\ref{lem:limitsurface}. Moreover, \eqref{eq:embedding_condition} is also satisfied for large $\mu$, because $\Tfamily$ converges to the embedded geodesic $c_a$ with $\hmax(\mu)\to0$ as $\mu\to\infty$ by Lemma~\ref{lem:limit_infinity}.
\end{proof}

\begin{figure}[]
	\centering
	\small
	\hspace{15mm}
	\vstretch{.75}{\includegraphics[height=0.22\textheight]{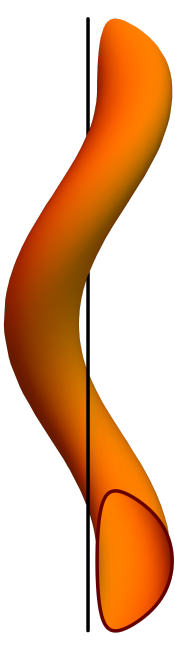}}
	\hfill
	\vstretch{.75}{\includegraphics[height=0.22\textheight]{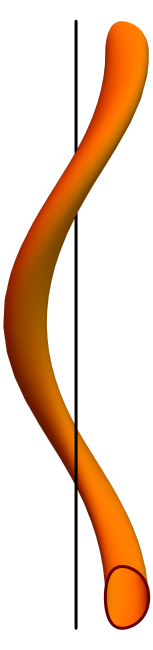}}
	\hfill
	\vstretch{.75}{\includegraphics[height=0.22\textheight]{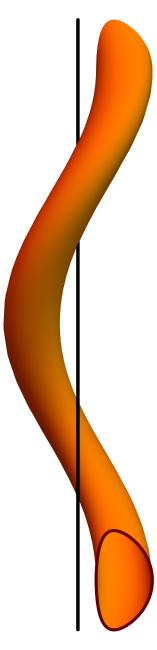}}
	\hfill
	\vstretch{.75}{\includegraphics[height=0.22\textheight]{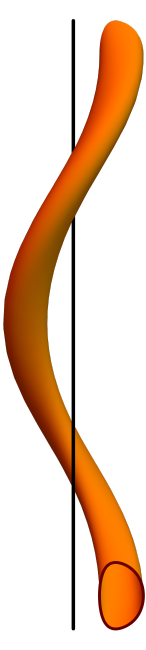}}
	\hspace{17mm}
	\caption{Numerically obtained examples of embedded tubes in $\Nildrei=\E(0,\tau)$ and $\PSLzwei=\E(-1,\tau)$ with pitch $a=1$. The chosen parameters are (left to right): $H=1$ in $\E(0,1)$, $H=2$ in $\E(0,2)$, $H=1.5$ in $\E(-1,1)$, and $H=2$ in $\E(-1,2)$. These tubes extend by simple periodicity and are non-compact.}
	\label{fig:embedded_nil}
\end{figure}

Now let us turn our attention to Berger spheres. Our model \eqref{model} is a non-compact fiber-wise covering of $\Berger$. In order to study the embeddedness of tubes in the compact setting, we just consider the vertical direction to be periodic. This is equivalent to compact fibers. Then condition~\eqref{eq:embedding_condition} is only necessary for embeddedness, but not sufficient anymore.

First of all, the orbits of the screw motion group $G_a$ must be compact, so that the tubes close up. The length of a fiber is~$\frac{8\pi\tau}{\kappa}$ and the required closing condition reads
\begin{equation}\label{eq:closing_condition}
	\frac{8\pi\tau}{\kappa}\,n=2\pi a\,m
\end{equation}
for some $n,m\in\N$. Here, $m$ represents the number of rotations around the screw motion axis and $n$ the number of rotations around the antipodal axis. The case $n=1$ is of special interest to us, as it reduces the possibility of self-intersections. A tube satisfying~\eqref{eq:closing_condition} must have pitch
\begin{equation*}
	a_{n,m}\coloneqq \frac{n}{m}\,\frac{4\tau}{\kappa}.
\end{equation*}

For $n=1$ and $m=2$ we recover the horizontal pitch $a_{1,2}=\frac{2\tau}{\kappa}$.

\begin{lem}\label{lem:closing_condition_berger}
	The pitch $a=a_{n,m}$ is admissible in $\Berger$ if and only if
	\begin{equation}\label{eq:pitch_berger}
		\frac{n}{m}\,\epsilon\in
		\left[\frac{\epsilon}{2}, \frac{\kappa}{8\tau^2}\,\epsilon\right),
	\end{equation}
	
	where $\epsilon\coloneqq\sgn(\kappa-4\tau^2)$. For $n=1$ there exists a tube with pitch $a=a_{1,m}$ if and only if
	\begin{align}
		&m\epsilon\leq2\epsilon
		\qquad\text{and}\qquad
		\left(\kappa-\frac{8\tau^2}{m}\right)\,\epsilon>0, \label{eq:pitch_berger1} \\
		\shortintertext{or}
		&m\epsilon\geq2\epsilon
		\qquad\text{and}\qquad
		\left(\kappa-\frac{8(m-1)\tau^2}{m}\right)\,\epsilon>0. \label{eq:pitch_berger2}
	\end{align}
	
	In particular, if $\kappa-8\tau^2>0$, there exists a tube with pitch $a=a_{1,m}$ for all $m\in\N$.
\end{lem}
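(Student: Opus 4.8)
The plan is to deduce everything from Definition~\ref{def:admissible} together with the fact, recalled in Section~\ref{sec:screwmotion_surfaces} following \cite{kaese}, that a screw motion tube with pitch $a$ exists exactly when $G_a$ contains a geodesic orbit; by Lemma~\ref{lem:group_properties} this happens if and only if $a$ \emph{or} its conjugate pitch $\tilde a=\frac{4\tau}{\kappa}-a$ is admissible. Since $\Berger$ corresponds to $\kappa>0$, admissibility of a pitch $a$ means $a\tau\epsilon\in\left[\frac{2\tau^2\epsilon}{\kappa},\frac{\epsilon}{2}\right)$.

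For the first assertion I would simply substitute $a=a_{n,m}=\frac{4n\tau}{m\kappa}$, so that $a\tau\epsilon=\frac{4\tau^2}{\kappa}\cdot\frac{n}{m}\epsilon$, and divide the defining double inequality by the positive factor $\frac{4\tau^2}{\kappa}$. This rewrites $\frac{2\tau^2\epsilon}{\kappa}\le a\tau\epsilon<\frac{\epsilon}{2}$ as $\frac{\epsilon}{2}\le\frac{n}{m}\epsilon<\frac{\kappa}{8\tau^2}\epsilon$, which is exactly \eqref{eq:pitch_berger}.

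For the second assertion the key observation is that the conjugate of $a_{1,m}$ is again of the same form, namely $\tilde a_{1,m}=\frac{4\tau}{\kappa}-\frac{4\tau}{m\kappa}=\frac{4(m-1)\tau}{m\kappa}=a_{m-1,m}$, so a tube with pitch $a_{1,m}$ exists iff $a_{1,m}$ or $a_{m-1,m}$ is admissible. I would then apply the first part twice. By \eqref{eq:pitch_berger}, $a_{1,m}$ is admissible iff $\frac{1}{m}\epsilon\in\left[\frac{\epsilon}{2},\frac{\kappa}{8\tau^2}\epsilon\right)$; multiplying the left inequality by $2m>0$ gives $m\epsilon\le 2\epsilon$, while writing the right inequality as $\left(\frac{\kappa}{8\tau^2}-\frac1m\right)\epsilon>0$ and multiplying by $8\tau^2>0$ gives $\left(\kappa-\frac{8\tau^2}{m}\right)\epsilon>0$; this is \eqref{eq:pitch_berger1}. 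In the same way, admissibility of $a_{m-1,m}$, i.e.\ $\frac{m-1}{m}\epsilon\in\left[\frac{\epsilon}{2},\frac{\kappa}{8\tau^2}\epsilon\right)$, becomes $m\epsilon\ge 2\epsilon$ together with $\left(\kappa-\frac{8(m-1)\tau^2}{m}\right)\epsilon>0$, i.e.\ \eqref{eq:pitch_berger2}. (For $m=1$ the degenerate conjugate $a_{0,1}=0$ is never admissible, and \eqref{eq:pitch_berger2} indeed fails there because $\kappa>0$, so nothing is lost.) Hence a tube with pitch $a_{1,m}$ exists iff \eqref{eq:pitch_berger1} or \eqref{eq:pitch_berger2} holds.

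The final ``in particular'' is then a specialization: if $\kappa-8\tau^2>0$ then $\kappa>4\tau^2$, so $\epsilon=1$, and for every $m\in\N$ at least one of the two conditions is satisfied---use \eqref{eq:pitch_berger1} when $m\le 2$, noting $\kappa-\frac{8\tau^2}{m}\ge\kappa-8\tau^2>0$, and use \eqref{eq:pitch_berger2} when $m\ge 2$, noting $\kappa-\frac{8(m-1)\tau^2}{m}>\kappa-8\tau^2>0$. The argument is elementary throughout; the one point that needs genuine care---what I would flag as the main thing to get right rather than a real obstacle---is not to forget the conjugate pitch when passing from admissibility to existence of a tube, and to keep the sign $\epsilon$ under control, which is cleanest if each inequality is first put in the shape $(\,\cdot\,)\epsilon\gtrless 0$ before any positive denominators are cleared.
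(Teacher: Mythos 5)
Your proposal is correct and follows essentially the same route as the paper's own proof: substitute $a_{n,m}$ into Definition~\ref{def:admissible} to get \eqref{eq:pitch_berger}, observe that the conjugate pitch is $a_{m-n,m}$ and apply the existence criterion (admissible or conjugate-admissible) to obtain \eqref{eq:pitch_berger1}--\eqref{eq:pitch_berger2}, then check both conditions under $\kappa-8\tau^2>0$. Your version just spells out the elementary $\epsilon$-bookkeeping (and the harmless $m=1$ degenerate case) more explicitly than the paper does.
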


\begin{proof}
	Condition~\eqref{eq:pitch_berger} follows directly from the definition of $a_{n,m}$ and Definition~\ref{def:admissible} of admissible pitch. For $n=1$ this is equivalent to \eqref{eq:pitch_berger1}. The existence theorem \cite[Thm.~3.6]{kaese} states that tubes exist if the pitch is admissible, or if it is conjugate to an admissible pitch. The conjugated pitch of $a_{n,m}$ is
	\begin{equation*}
		\tilde a_{n,m}=\frac{4\tau}{\kappa}-a_{n,m}=a_{m-n,m}.
	\end{equation*}
	
	It is admissible if and only if
	\begin{equation}
		\frac{m-n}{m}\,\epsilon\in
		\left[\frac{\epsilon}{2}, \frac{\kappa}{8\tau^2}\,\epsilon\right).
	\end{equation}
	
	For $n=1$ this is equivalent to \eqref{eq:pitch_berger2}. An evaluation of the conditions \eqref{eq:pitch_berger1} and \eqref{eq:pitch_berger2} shows that these are always met if $\kappa-8\tau^2>0$.
\end{proof}

A tube with pitch $a_{1,m}$ intersects itself if $m$-times the vertical diameter of the profile curve ($=2\hmax$) is equal or larger than the length of a fiber. That is,
\begin{equation}\label{eq:embedding_condition2}
    2m\hmax(\mu)<\frac{8\pi\tau}{\kappa}
\end{equation}
must be satisfied for embeddedness. This inequality coincides with the embeddedness condition~\eqref{eq:embedding_condition}. Thus, we can disregard~\eqref{eq:embedding_condition2} and obtain an embedding result similar to the non-compact case:

\begin{theo}\label{theo:embedding_compact}
    Consider the Berger sphere $\Berger$ and suppose that the pitch is $a=a_{1,m}$, where $m$ satisfies \eqref{eq:pitch_berger1} or \eqref{eq:pitch_berger2}, respectively. Then there exists a neighborhood of $\mu=0$ and a neighborhood of $\mu=\infty$, such that $T_a(\mu)$ is embedded for all $\mu$ in these neighborhoods.
\end{theo}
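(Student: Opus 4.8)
The plan is to transcribe the proof of Theorem~\ref{theo:embedding_noncompact} to the compact setting, the only new ingredient being the reduction of the compact self-intersection condition \eqref{eq:embedding_condition2} to the non-compact one \eqref{eq:embedding_condition} that was carried out in the paragraph preceding the statement. First I would record that the hypotheses on $m$ (namely \eqref{eq:pitch_berger1} or \eqref{eq:pitch_berger2}) guarantee, via Lemma~\ref{lem:closing_condition_berger}, that the tube family $\Tfamily$ along $c_{a_{1,m}}$ genuinely exists in $\Berger$, so the statement is non-vacuous; and since $\kappa>0$ and $\tau\neq0$ for a Berger sphere, the pitch $a=a_{1,m}=\tfrac{4\tau}{m\kappa}$ is positive.

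Next I would fix $\mu\in(0,\infty)$ and argue that $T_a(\mu)$ is embedded as soon as \eqref{eq:embedding_condition} holds. The profile curve $\gamma_a(\mu)$ of a tube is by definition an embedded simple closed curve in the orbit space $\MKT/G_a$, so no self-intersection can come from the profile curve alone. In the compact model the fibers have length $\tfrac{8\pi\tau}{\kappa}$, and with $n=1$ the screw-motion orbit wraps around a fiber exactly once; as explained above, the sole remaining source of self-intersections is then the vertical wrap-around, which is excluded precisely by \eqref{eq:embedding_condition2}. Because $a=a_{1,m}$ gives $2\pi a=\tfrac{8\pi\tau}{m\kappa}$, the inequality \eqref{eq:embedding_condition2} is literally $2\hmax(\mu)<2\pi a$, i.e.\ \eqref{eq:embedding_condition}. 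This reduction is the one genuinely non-routine point of the proof, and it is where the restriction $n=1$ is essential: for $n\geq2$ the orbit winds around the fiber several times and further coincidences of points become possible.

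Finally I would check \eqref{eq:embedding_condition} near the two ends of the family, exactly as in the non-compact case. The map $\mu\mapsto\hmax(\mu)=\hmax(\Htube(\mu),\Jtube(\mu))$ is continuous, being the integral \eqref{eq:height_max} of an integrand depending continuously on the parameters along the continuous curve $(\Htube,\Jtube)$. By Lemma~\ref{lem:limitsurface}, $\hmax(\mu)\to\tfrac{\pi}{2}\abs{a}$ as $\mu\to0$, and since $a>0$ we get $2\hmax(\mu)\to\pi a<2\pi a$, so \eqref{eq:embedding_condition} holds on a neighborhood of $\mu=0$. By Lemma~\ref{lem:limit_infinity}, $T_a(\mu)$ converges in distance to the geodesic $c_a$ as $\mu\to\infty$, hence $\hmax(\mu)\to0<2\pi a$, so \eqref{eq:embedding_condition} holds on a neighborhood of $\mu=\infty$. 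In both neighborhoods $T_a(\mu)$ is therefore embedded, which is the claim.
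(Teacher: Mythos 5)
Your proposal is correct and follows essentially the same route as the paper: the existence of the family via Lemma~\ref{lem:closing_condition_berger}, the observation that for $a=a_{1,m}$ the compact self-intersection condition~\eqref{eq:embedding_condition2} coincides with~\eqref{eq:embedding_condition}, and the verification of~\eqref{eq:embedding_condition} near $\mu=0$ and $\mu=\infty$ using Lemma~\ref{lem:limitsurface} and Lemma~\ref{lem:limit_infinity}. The only difference is cosmetic: you re-derive the reduction of~\eqref{eq:embedding_condition2} to~\eqref{eq:embedding_condition} inside the proof, whereas the paper places it in the discussion preceding the theorem.
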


\begin{proof}
    The pitch $a_{1,m}$ satisfies the closing condition~\eqref{eq:closing_condition} and the tube family exists by Lemma~\ref{lem:closing_condition_berger}. The embeddedness condition~\eqref{eq:embedding_condition} is satisfied for small $\mu$, because $\hmax(\mu)\to\frac{\pi}{2}\abs{a}$ as $\mu\to0$ by Lemma~\ref{lem:limitsurface}. Moreover, \eqref{eq:embedding_condition} is also satisfied for large $\mu$, because $\Tfamily$ converges to the embedded geodesic $c_a$ with $\hmax(\mu)\to0$ as $\mu\to\infty$ by Lemma~\ref{lem:limit_infinity}.
\end{proof}

\begin{figure}[]
	\centering
	\small
	\hspace{1mm}
	\includegraphics[height=0.15\textheight]{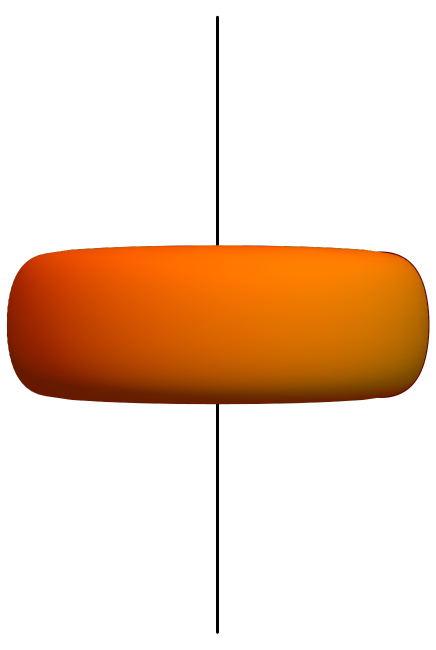}
	\hfill
	\includegraphics[height=0.15\textheight]{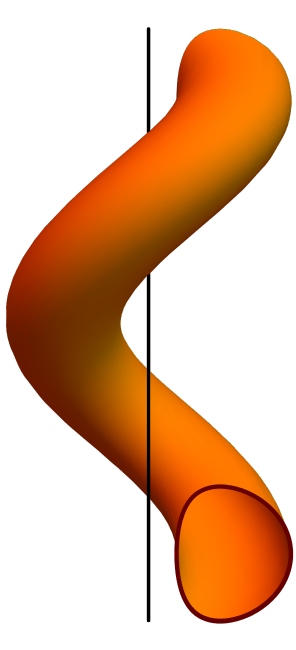}
	\hfill
	\includegraphics[height=0.15\textheight]{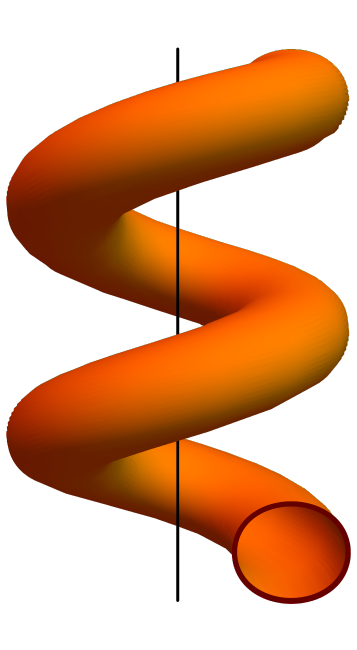}
	\hfill
	\includegraphics[height=0.15\textheight]{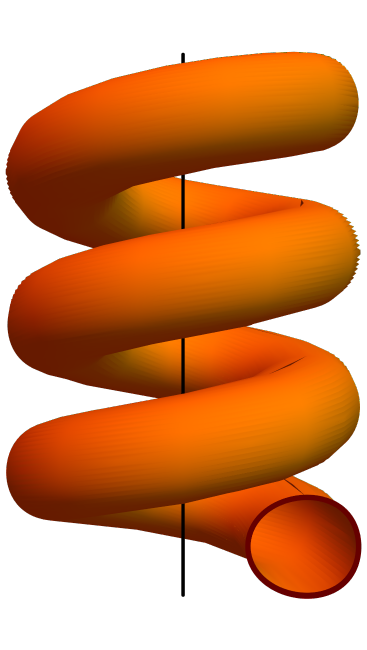}
	\hfill
	\includegraphics[height=0.15\textheight]{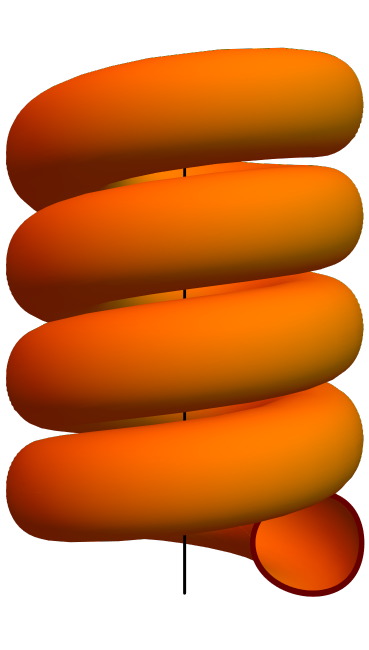}
	\hfill
	\includegraphics[height=0.15\textheight]{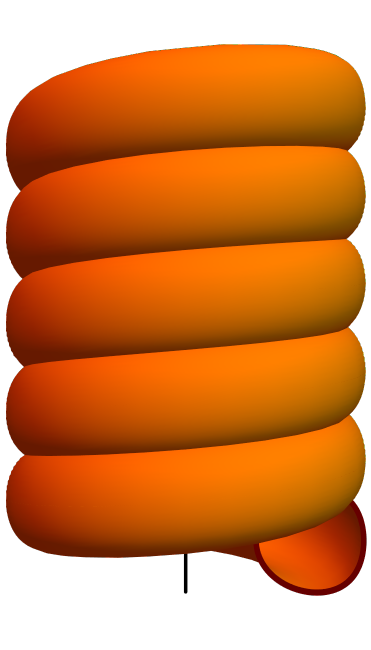}
	\hspace{1mm}
	\caption{Numerically obtained examples of tubes in $\Berger=\E(4,0.5)$ with $H=1$. The pitch is (left to right): $a=0$ (rotational), $a=a_{1,1}$, $a=a_{1,2}$ (horizontal), $a=a_{1,3}$, $a=a_{1,4}$, and $a=a_{1,5}$. The first two tubes are isometric. All tubes are compact in $\Berger$ and all tubes except the last one are embedded.}
	\label{fig:embedded_berger}
\end{figure}

For arbitrary mean curvature, the embeddedness condition~\eqref{eq:embedding_condition} is hard to verify, because $\hmax$ is not explicit. Figure~\ref{fig:embedded_berger} shows some examples of embedded and non-embedded tubes in $\Berger$. Tubes tend to be embedded for small $m$ and non-embedded for large $m$ depending on the explicit value of $\kappa$ and $\tau$.

\section{Foliation Results}
\label{sec:foliation}

We turn our attention to foliations by tubes and consider the special case $2\tau^2-a\tau\kappa=0$, that is, $\SKR$ with arbitrary pitch $a\in\R$ or $\Berger$ with horizontal pitch $a=\frac{2\tau}{\kappa}$. Recall that in this special case $\Htube(\mu)=\mu$ and $\Jtube(\mu)=-\frac{2\mu}{\kappa}$ and we can consider the mean curvature $H$ as the parameter of $\Tfamily$ as explained in Remark~\ref{rem:construction_tubefamily}. In the Berger sphere case Manzano has shown that the family of horizontal tubes $\Tfamily$ produces a foliation if and only if $(1-x_0^2)\kappa-4\tau^2\leq0$, where $x_0\approx0.83356$ is the unique positive solution of $x\artanh(x)=1$ \cite[Thm.~4.1]{manzano24}. If $\Tfamily$ produces a foliation, then it can be extended to foliate $\Berger$ without the two geodesics arising as $H\to\pm\infty$. We prove a similar result for $\SKR$.

\begin{theo}\label{theo:foliation_skr}
    In $\SKR$ the family of tubes $\Tfamily$ produces a foliation if and only if
    \begin{equation*}
        \abs{a}\geq\sqrt{\frac{1-x_0^2}{x_0^2\,\kappa}},
    \end{equation*}
    where $x_0\approx0.83356$ is the unique positive solution of $x\artanh(x)=1$. Moreover, if $\Tfamily$ produces a foliation, we can extend $\Tfamily$ by adding the minimal spherical helicoid $\Sigma_{a,0,0}$ and the vertically translated family $\Tfamily'\coloneqq\Tfamily+(0,0,\pi\abs{a})$ with opposite orientation. This extended family
    \begin{equation}\label{eq:extended_tubefamily}                   
	\overline{\Tfamily}\coloneqq\Tfamily\cup\lbrace\Sigma_{a,0,0}\rbrace\cup\Tfamily'
	=\left\lbrace T_a(H): H\in(-\infty,\infty)\right\rbrace
	=\left\lbrace T_a(H): 4H^2+\kappa>0\right\rbrace
    \end{equation}
    foliates $\SKR$ without the two geodesics arising as $H\to\pm\infty$, see Figure~\ref{fig:foliation_surface}.
\end{theo}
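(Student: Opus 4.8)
The plan is to pass to the quotient $\SKR/G_a$, recast the foliation as a monotonicity statement for the profile curves, and read the threshold off an explicit integral; the extension is then formal. Since $\SKR=\E(\kappa,0)$ has $\tau=0$, the geodesic $c_a$ of Lemma~\ref{lem:group_properties} is the slanted great circle over the equator, $\rho_a=\arcs(0)=\frac{\pi}{2\sqrt\kappa}$, and every tube has energy $\Jtube(H)=-\frac{2H}{\kappa}$ (start of Section~\ref{sec:moduli_space}). By Proposition~\ref{prop:dihedral_symmetry} each profile curve $\gamma_a(H):=\gamma_a(\,\cdot\,;H,\Jtube(H))$ is a simple closed loop, symmetric in $\{r=\rho_a\}$ and $\{h=0\}$, with $r\in[r_-(H),r_+(H)]$, $r_\pm(H)=\rho_a\pm\arct(2H)$ strictly monotone in $H$, and $h\in[-\hmax(H),\hmax(H)]$, where $\hmax(H)\to\frac{\pi}{2}\abs{a}$ as $H\to0$ (Lemma~\ref{lem:limitsurface}) and $\hmax(H)\to0$ as $H\to\infty$ (Lemma~\ref{lem:limit_infinity}). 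Since the vertical translations of $\SKR$ commute with $G_a$, the leaves of $\overline{\Tfamily}$ are mutually disjoint and cover $\SKR$ minus the two limiting geodesics exactly when their profile curves are mutually disjoint and cover the quotient cylinder $(0,\frac{\pi}{\sqrt\kappa})\times(\R/2\pi\abs{a}\,\Z)$ minus the two points $(\rho_a,0)$ and $(\rho_a,\pi\abs{a})$; here the profile curve of $\Sigma_{a,0,0}$ is a pair of horizontal lines at distance $\pi\abs{a}$ (Lemma~\ref{lem:minimal_helicoid}), bounding the slab $\{\abs{h}<\frac{\pi}{2}\abs{a}\}$, and the profile curves of $\Tfamily'$ are the $\pi\abs{a}$-translates of those of $\Tfamily$. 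Each $\gamma_a(H)$ encloses $(\rho_a,0)$, so ``disjoint'' means ``nested''. Writing the quarter of $\gamma_a(H)$ in $\{r\ge\rho_a,\,h\ge0\}$ as a graph $h=\phi_H(r)$, $r\in[\rho_a,r_+(H)]$ (legitimate by \eqref{ode_sigma}, as $r$ strictly decreases in $\sigma$ on $(\tfrac{\pi}{2},\pi)$), one checks from the monotonicity of $r_\pm$ and the boundary values of $\hmax$ that nestedness, together with exhaustion of that slab, is equivalent to: for every $r$, the map $H\mapsto\phi_H(r)$ is strictly decreasing onto $(0,\frac{\pi}{2}\abs{a})$. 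Granting this, $\Tfamily$ foliates the open slab minus $(\rho_a,0)$, $\Sigma_{a,0,0}$ supplies its two boundary lines, $\Tfamily'$ foliates the complementary slab minus $(\rho_a,\pi\abs{a})$, and assembling these over the cylinder produces the foliation; \eqref{eq:extended_tubefamily} follows because $4H^2+\kappa>0$ is automatic for $\kappa>0$ and $H<0$ labels the members of $\Tfamily'$ via the orientation-reversing isometry.

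With $\tau=0$ and $J=\Jtube(H)$ the coefficients in \eqref{eq:height_function} collapse ($C_4=0$ and $f$ factors over $\sin^2\!\sigma$), giving $\frac{dh}{d\sigma}=\abs{a}\,\sqrt{\sin^2\!\sigma+\beta^2}\,(\sin^2\!\sigma+\alpha^2)^{-1}\sin\sigma$ with $\alpha^2=\frac{4H^2}{\kappa}$, $\beta^2=\frac{4H^2(1+a^2\kappa)}{a^2\kappa^2}$, hence (substituting $x=\cos\sigma$)
\[
  \phi_H(r)=\abs{a}\int_0^{X}\frac{\sqrt{1-x^2+\beta^2}}{1-x^2+\alpha^2}\,dx,\qquad X=X(r,H)=\Big(1-\tfrac{4H^2}{\ct(r-\rho_a)^2}\Big)^{1/2}\in(0,1],
\]
with $X\to1$ as $r\to\rho_a$ (so $\phi_H(\rho_a)=\hmax(H)$) and $X\to0$ as $r\to r_+(H)$. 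Putting $u=\alpha^2$, $\lambda=1+\frac1{a^2\kappa}$, $b^2=\frac1{a^2\kappa}=\lambda-1$, differentiating at fixed $r$ gives
\[
  \partial_u\phi_H(r)=\abs{a}\left[-\frac{\mu\sqrt{\mu+\lambda}}{2(\mu+1)\sqrt u\,X}+\int_0^X\frac{(\lambda-2)(1-x^2)-\lambda u}{2\sqrt{1-x^2+\lambda u}\,(1-x^2+u)^2}\,dx\right],\quad\mu=\tfrac{\kappa}{\ct(r-\rho_a)^2},
\]
and the content is to show the bracket is negative iff $\abs{a}\ge\sqrt{(1-x_0^2)/(x_0^2\kappa)}$. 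The boundary term is $\le0$, so it is a matter of bounding the integral. For necessity it suffices to inspect the corner $\mu\to0$, $u\to0^+$ (i.e.\ $r\to\rho_a$, $H\to0^+$), where $\partial_u\phi_H(r)\to\partial_u\hmax$: rationalising via $1-x^2+\beta^2=v^2$ and decomposing $\frac{(\lambda-2)v^2-\lambda b^2}{(v^2-b^2)^2}=\frac{\lambda-2}{v^2-b^2}-\frac{2b^2}{(v^2-b^2)^2}$ yields a closed form for $\hmax$ in terms of $\artanh$, whence $\partial_u\hmax\sim\frac{|a|}{2\sqrt u}\bigl(b\artanh\tfrac{b}{\sqrt{b^2+1}}-\sqrt{b^2+1}\bigr)$ as $u\to0^+$, which has the sign of $y\artanh y-1$ with $y=\frac{b}{\sqrt{b^2+1}}$. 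As $x\mapsto x\artanh x$ is strictly increasing on $(0,1)$ with unique root $x_0$ of $x\artanh x=1$, and $y<x_0\iff b^2<\frac{x_0^2}{1-x_0^2}\iff a^2\kappa>\frac{1-x_0^2}{x_0^2}$, the sign flips exactly at $\abs{a}=\sqrt{(1-x_0^2)/(x_0^2\kappa)}$. Hence if $\abs{a}<\sqrt{(1-x_0^2)/(x_0^2\kappa)}$ then $\hmax$ increases past $\frac{\pi}{2}\abs{a}$ for small $H>0$ and, as $\hmax\to0$ at infinity, takes some value twice; the two corresponding tubes then meet at $(\rho_a,\hmax)$, so $\Tfamily$ is not a foliation.

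For sufficiency one must upgrade this to $\partial_u\phi_H(r)<0$ for all admissible $(r,H)$: that the closed-form $\hmax$ is strictly decreasing on all of $(0,\infty)$ when $\abs{a}\ge\sqrt{(1-x_0^2)/(x_0^2\kappa)}$, and that turning on $\mu>0$ only makes $\partial_u\phi_H(r)$ more negative (the extra boundary term is $\le0$, and truncating the integral to $[0,X]$ removes a net negative tail). This is the main obstacle: the integrand of $\partial_u\phi_H(r)$ changes sign in $x$ (positive for small $x$ when $\lambda>2$, negative near $x=X$), so the monotonicity is genuinely non-pointwise and rests on the cancellation encoded in the $\artanh$ closed form, and one has to check that no radius $r<\rho_a$ and no larger $H$ imposes a stricter bound than the corner $(r,H)\to(\rho_a,0)$. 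Once $\partial_H\phi_H(r)<0$ is established throughout, the leaves $\{\gamma_a(H)\}$ are nested and, with $\Sigma_{a,0,0}$ and $\Tfamily'$, exhaust the quotient cylinder minus the two distinguished points, which is the assertion of the theorem.
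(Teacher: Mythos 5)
Your reduction to the quotient, the closed-form $\artanh$ evaluation of $\hmax$, and the necessity argument (non-monotone $\hmax$ forces two leaves through the common point $(\rho_a,\hmax)$) all match the paper, and the assembly of $\overline{\Tfamily}$ from the two slabs is correct. But the sufficiency direction has a genuine gap, which you yourself flag as ``the main obstacle'': you reduce foliation to the statement that $H\mapsto\phi_H(r)$ is strictly decreasing for \emph{every} radius $r$, and you do not prove this away from the corner $r=\rho_a$. The sign analysis of $\partial_u\phi_H(r)$ for general $(r,H)$ — where the integrand genuinely changes sign — is left open, so the proof does not close.

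The idea you are missing is that this stronger pointwise monotonicity is never needed. Two leaves of the continuous family can fail to foliate only if some pair is \emph{tangent}; since $\sigma$ is the angle between $\gamma'$ and $\partial_r$, tangency forces the two curves to have the same $\sigma$ at the contact point. But for $J=\Jtube(H)=-\tfrac{2H}{\kappa}$ the radius is the explicit function $r(\sigma;H)=\arct\!\left(\tfrac{2H}{\sin\sigma}\right)+\tfrac{\pi}{2\sqrt{\kappa}}$, which depends on $(H,\sigma)$ only through $\tfrac{H}{\sin\sigma}$; hence equal $r$ and equal $\sigma\neq\pi$ force $H_1=H_2$. So distinct leaves can only touch at $\sigma=\pi$, i.e.\ at the single radius $r=\rho_a$ and height $\pm\hmax(H)$, and the entire foliation criterion collapses to monotonicity of $H\mapsto\hmax(H)$ — exactly the quantity your closed form already controls: with $x=\sqrt{\kappa}\,\bigl(4H^2+\kappa\bigr)^{-1/2}\bigl(1+\kappa a^2\bigr)^{-1/2}$ ranging over $\bigl(0,(1+\kappa a^2)^{-1/2}\bigr)$, one has $\partial_H\hmax\leq0$ for all $H>0$ iff $(1+\kappa a^2)^{-1/2}\leq x_0$, which is the stated bound on $\abs{a}$. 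With that observation your necessity computation upgrades to a full equivalence and the rest of your argument (limits as $H\to0$ and $H\to\infty$ giving exhaustion of the slab, then the helicoid and the translated family) goes through as written.
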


\begin{figure}[]
	\centering
	\small
	\def\svgwidth{0.6\textwidth}\executeiffilenewer{foliation_surface_proof.svg}{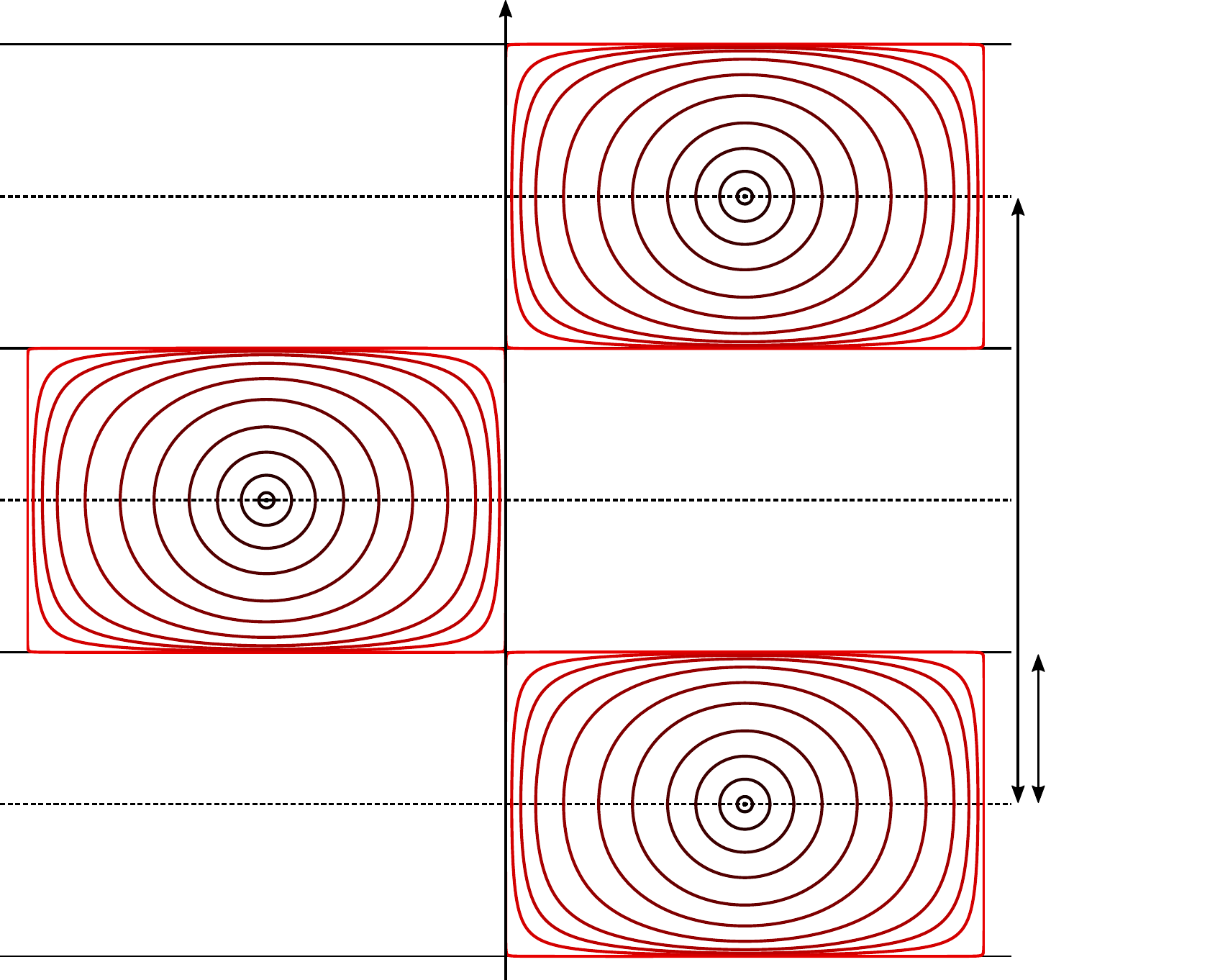}{inkscape -z -D --file=foliation_surface_proof.svg 	--export-pdf=foliation_surface_proof.pdf --export-latex}
\begingroup%
  \makeatletter%
  \providecommand\color[2][]{%
    \errmessage{(Inkscape) Color is used for the text in Inkscape, but the package 'color.sty' is not loaded}%
    \renewcommand\color[2][]{}%
  }%
  \providecommand\transparent[1]{%
    \errmessage{(Inkscape) Transparency is used (non-zero) for the text in Inkscape, but the package 'transparent.sty' is not loaded}%
    \renewcommand\transparent[1]{}%
  }%
  \providecommand\rotatebox[2]{#2}%
  \newcommand*\fsize{\dimexpr\f@size pt\relax}%
  \newcommand*\lineheight[1]{\fontsize{\fsize}{#1\fsize}\selectfont}%
  \ifx\svgwidth\undefined%
    \setlength{\unitlength}{810.42095947bp}%
    \ifx\svgscale\undefined%
      \relax%
    \else%
      \setlength{\unitlength}{\unitlength * \real{\svgscale}}%
    \fi%
  \else%
    \setlength{\unitlength}{\svgwidth}%
  \fi%
  \global\let\svgwidth\undefined%
  \global\let\svgscale\undefined%
  \makeatother%
  \begin{picture}(1,0.80658245)%
    \lineheight{1}%
    \setlength\tabcolsep{0pt}%
    \put(0,0){\includegraphics[width=\unitlength,page=1]{foliation_surface_proof.pdf}}%
    \put(0.85488597,0.4397432){\makebox(0,0)[lt]{\lineheight{1.25}\smash{\begin{tabular}[t]{l}\textbf{$2\pi a$}\end{tabular}}}}%
    \put(0.8729875,0.19291856){\makebox(0,0)[lt]{\lineheight{1.25}\smash{\begin{tabular}[t]{l}\textbf{$\hmax(0)=\dfrac{\pi}{2}\,a$}\end{tabular}}}}%
    \put(0,0){\includegraphics[width=\unitlength,page=2]{foliation_surface_proof.pdf}}%
  \end{picture}%
\endgroup%

	\caption{Foliation by tubes in $\SKR$, see Theorem~\ref{theo:foliation_skr}. The bold curves represent the tube family $\Tfamily$, the thin curves the vertically translated family $\Tfamily'$.}
	\label{fig:foliation_surface}
\end{figure}

\begin{proof}
    Consider first the family $\Tfamily$ parameterized by $H>0$. As $H\to\infty$, the profile curves $\gamma_a(\sigma;H)=(r(\sigma;H),h_a(\sigma;H))$ converge uniformly to $(\frac{\pi}{2\sqrt{\kappa}},0)$ (see Lemma~\ref{lem:limit_infinity}), where $\gamma_a(\sigma;H)$ stays away from $(\frac{\pi}{2\sqrt{\kappa}},0)$ for all $H>0$. We can further assume $\sigma\in[\frac{\pi}{2},\pi]$ due to the dihedral symmetry of the profile curve (see Proposition~\ref{prop:dihedral_symmetry}). 
	
    The family of curves $\gamma_a(\sigma;H)$ is continuous with respect to $H$. Therefore it will fail to foliate the orbit space if and only if we can find $H_1>H_2>0$ such that $\gamma_a(\,\cdot\,;H_1)$ is tangent to $\gamma_a(\,\cdot\,;H_2)$ at some point. That is, there are $\sigma_1,\sigma_2\in[\frac{\pi}{2},\pi]$ and $\rho>0$ such that $\gamma_a(\sigma_1;H_1)=\gamma_a(\sigma_2;H_2)$ and $\gamma'_a(\sigma_1;H_1)=\rho\,\gamma'_a(\sigma_2;H_2)$. As $\sigma$ is the angle between the tangent $\gamma'$ and the radial direction, this implies $\sigma_1=\sigma_2$. Therefore the tangential point is characterized by
    \begin{align}
        (r(\sigma;H_1),h_a(\sigma;H_1))&=(r(\sigma;H_2),h_a(\sigma;H_2)) \label{eq:foliation_condition1} \\
        (r'(\sigma;H_1),h'_a(\sigma;H_1))&=(\rho\,r'(\sigma;H_2),\rho\,h'_a(\sigma;H_2)). \label{eq:foliation_condition2}
    \end{align}

	For $\sigma\neq\pi$ we obtain $r(\sigma;H)=\arct\left(\frac{2H}{\sin\sigma}\right)+\frac{\pi}{2\sqrt{\kappa}}$ from \eqref{eq:radius_sigma}. Then the first component of \eqref{eq:foliation_condition1} is equivalent to
	\begin{equation}\label{eq:foliation_proof_1}
		\frac{H_1}{\sin\sigma}=\frac{H_2}{\sin\sigma},
	\end{equation}
	
	which implies $H_1=H_2$, contradicting our assumption $H_1>H_2$. This means no such tangent point can occur for $\sigma\neq\pi$. For $\sigma=\pi$ we get $r(\pi;H_1)=r(\pi;H_2)=\frac{\pi}{2\sqrt{\kappa}}$. Therefore the family $\lbrace\gamma_a(\,\cdot\,;H)\rbrace$ produces a foliation if and only if $H\mapsto h_a(\pi;H)=\hmax(H)$ is monotonic.
    
    We get an explicit expression for $\hmax(H)$ from \eqref{eq:height_max} and \eqref{eq:height_function} by integration
    \begin{equation}\label{eq:height_skr}
        \begin{aligned}
        \hmax(H)
        &=\int\limits_{\frac{\pi}{2}}^\pi \frac{\sqrt{a^2\kappa^2\sin^2\!\sigma+4H^2(1+\kappa a^2)}}{4H^2+\kappa\sin^2\!\sigma}\,\sin\sigma\,d\sigma \\
        &=-\frac{2H}{\sqrt{\kappa}\sqrt{4H^2+\kappa}}\, \arcoth\!\left(\frac{\sqrt{4H^2+\kappa}\sqrt{1+\kappa a^2}}{\sqrt{\kappa}}\right) \\
        &\hspace{20mm}-\abs{a}\arcsin\!\left(\frac{\abs{a}\kappa}{\sqrt{a^2\kappa^2+4H^2(1+\kappa a^2)}}\right).
        \end{aligned}
    \end{equation}
    
    The derivative of \eqref{eq:height_skr} with respect to $H$ is\vspace{3mm}
    \begin{equation}\label{eq:height_Hderivative}
        \partial_H\hmax(H)
        =\frac{2\sqrt{\kappa}}{(4H^2+\kappa)^{3/2}}\,\arcoth\!\left(\frac{\sqrt{4H^2+\kappa}\sqrt{1+\kappa a^2}}{\sqrt{\kappa}}\right)
        -\frac{2\sqrt{1+\kappa a^2}}{4H^2+\kappa}.
    \end{equation}

    Substitute $x\coloneqq\frac{\sqrt{\kappa}}{\sqrt{4H^2+\kappa}\sqrt{1+\kappa a^2}}$. Then $\partial_H\hmax(H)\leq0$ reads\vspace{3mm}
    \begin{equation*}
        \arcoth\!\left(\frac{1}{x}\right)-\frac{1}{x}\leq0
        \qquad
        \Leftrightarrow
        \qquad
        x\arctan(x)\leq1.
    \end{equation*}
    
    We resubstitute and see that $\partial_H\hmax(H)\leq0$ for all $H>0$ if and only if $\abs{a}\geq\sqrt{\frac{1-x_0^2}{x_0^2\,\kappa}}$. Thus, the profile curves of $\Tfamily$ produce a foliation in the orbit space $\EKT/G_a$ if and only if $\abs{a}\geq\sqrt{\frac{1-x_0^2}{x_0^2\,\kappa}}$.
    
    It remains to show that also the tubes produce a foliation and that this foliation actually covers $\EKT$ minus the geodesic $c_a$. Since $\gamma_a(\,\cdot\,;H)$ converges to the spherical helicoid as $H\to0$, we deduce that $\lbrace\gamma_a(\,\cdot\,;H)\colon H\in(0,\infty)\rbrace$ foliates the entire open rectangle $(0,\frac{\pi}{\sqrt{\kappa}})\times(-\frac{\pi}{2}\abs{a},\frac{\pi}{2}\abs{a})$ minus the point $(\frac{\pi}{2\sqrt{\kappa}},0)$ corresponding to $c_a$. In particular, the generated tubes cannot intersect themselves as \eqref{eq:embedding_condition} is satisfied. Moreover, the foliation is complete if we add the minimal spherical helicoid ($H=0$) as well as the vertically translated family with reversed orientation ($H<0$), see Figure \ref{fig:foliation_surface}.
\end{proof}

This foliation result implies that (under the given constrains) tubes in $\SKR$ are embedded. This was missing in \cite{vrzina16}, where the existence of these tubes were proven.

In case the family $\Tfamily$ does not produce a foliation, we can always remove some tubes with small mean curvature, such that the remaining tubes produce a foliation of a proper subset of the ambient space:

\begin{cor}\label{cor:foliation_skr}
    For $\SKR$ suppose $\abs{a}<\sqrt{\frac{1-x_0^2}{x_0^2\,\kappa}}$, i.e., $\Tfamily$ does not produce a foliation. Then the tube height $\hmax(H)$ attains its maximum at $H=\frac{\sqrt{\kappa}}{2}\sqrt{\frac{1-x_0^2(1+\kappa a^2)}{x_0^2(1+\kappa a^2)}}$ and the subfamily of tubes
    \begin{equation*}
        \hat{\Tfamily}\coloneqq\left\lbrace T_a(H):
        H\in\left(\frac{\sqrt{\kappa}}{2}\sqrt{\frac{1-x_0^2(1+\kappa a^2)}{x_0^2(1+\kappa a^2)}},\infty\right)\right\rbrace
    \end{equation*}
    foliates an open subset of $\SKR$.
\end{cor}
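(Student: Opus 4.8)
The plan is to deduce everything from an analysis of the single-variable function $H\mapsto\hmax(H)$, the essential computation for which is already contained in the proof of Theorem~\ref{theo:foliation_skr}. First I would recall from there the substitution $x=x(H)\coloneqq\frac{\sqrt\kappa}{\sqrt{4H^2+\kappa}\sqrt{1+\kappa a^2}}$, under which the derivative \eqref{eq:height_Hderivative} satisfies $\partial_H\hmax(H)\leq0\iff x\artanh(x)\leq1$, and note that $x\mapsto x\artanh(x)$ is a strictly increasing homeomorphism of $(0,1)$ onto $(0,\infty)$, so $x_0$ is its unique preimage of $1$, with $x\artanh(x)<1$ for $x<x_0$ and $x\artanh(x)>1$ for $x>x_0$.

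Next I would locate the maximum explicitly. The map $H\mapsto x(H)$ is a strictly decreasing diffeomorphism of $(0,\infty)$ onto $\big(0,\tfrac1{\sqrt{1+\kappa a^2}}\big)$, and the hypothesis $\abs{a}<\sqrt{\frac{1-x_0^2}{x_0^2\kappa}}$ is exactly equivalent to $x_0^2(1+\kappa a^2)<1$, equivalently to $\tfrac1{\sqrt{1+\kappa a^2}}>x_0$. Hence there is a unique $H^*>0$ with $x(H^*)=x_0$, and solving $\frac{\sqrt\kappa}{\sqrt{4(H^*)^2+\kappa}\sqrt{1+\kappa a^2}}=x_0$ for $H^*$ gives precisely $H^*=\frac{\sqrt\kappa}{2}\sqrt{\frac{1-x_0^2(1+\kappa a^2)}{x_0^2(1+\kappa a^2)}}$, the radicand being positive thanks to the hypothesis. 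Since $x(H)>x_0$ for $H<H^*$ and $x(H)<x_0$ for $H>H^*$, the sign rule above gives $\partial_H\hmax>0$ on $(0,H^*)$ and $\partial_H\hmax<0$ on $(H^*,\infty)$; thus $\hmax$ is strictly increasing on $(0,H^*]$ and strictly decreasing on $[H^*,\infty)$, so it attains its global maximum at $H=H^*$, proving the first assertion.

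Finally I would obtain the foliation statement. On $(H^*,\infty)$ the function $\hmax$ is strictly decreasing, hence injective, so the argument from the proof of Theorem~\ref{theo:foliation_skr} applies verbatim to the subfamily $\{\gamma_a(\,\cdot\,;H):H>H^*\}$: this family of profile curves in $\SKR/G_a$ is continuous in $H$ and collapses to the point $(\tfrac{\pi}{2\sqrt\kappa},0)$ as $H\to\infty$ (Lemma~\ref{lem:limit_infinity}), so it can fail to foliate $\SKR/G_a$ only if two of its members are tangent at some point; at such a tangency the two $\sigma$-parameters coincide, whence for $\sigma\neq\pi$ formula \eqref{eq:radius_sigma} forces the two mean curvatures to agree, and for $\sigma=\pi$ both curves have $r=\tfrac{\pi}{2\sqrt\kappa}$ and the contact would force $\hmax(H_1)=\hmax(H_2)$, contradicting injectivity. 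Therefore $\{\gamma_a(\,\cdot\,;H):H>H^*\}$ foliates an open subset of $\SKR/G_a$, and the corresponding tubes $T_a(H)$ with $H>H^*$ foliate an open subset of $\SKR$; those with $H$ large are in addition embedded, since $\hmax(H)\to0$ makes \eqref{eq:embedding_condition} hold. The main obstacle — really the only non-routine point — is this last appeal to the principle that a continuous, shrinking family of closed curves can fail to foliate only by becoming tangent somewhere, which however is precisely what is proved for the full family in Theorem~\ref{theo:foliation_skr} and is here merely invoked; everything else reduces to the elementary calculus of $x\mapsto x\artanh(x)$.
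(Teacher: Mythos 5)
Your proposal is correct and follows essentially the same route as the paper: the paper's proof simply reads off from the explicit expression \eqref{eq:height_Hderivative} where $\partial_H\hmax$ vanishes and where it is negative, which is exactly the computation you carry out via the substitution $x(H)$ and the monotonicity of $x\artanh(x)$. Your additional details --- the equivalence of the hypothesis with $x_0^2(1+\kappa a^2)<1$, the solution of $x(H^*)=x_0$, and the restatement of the tangency argument on $(H^*,\infty)$ --- are correct elaborations of what the paper's two-sentence proof leaves implicit.
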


\begin{proof}
    We can conclude from the proof of Theorem~\ref{theo:foliation_skr} that the tube height $\hmax(H)$ attains its maximum when $\partial_H\hmax(H)=0$, and that the tubes foliate if and only if $\partial_H\hmax(H)<0$. Using the explicit expression \eqref{eq:height_Hderivative} yields the statement.
\end{proof}

Following the same arguments we can derive the analog result in $\Berger$ from the proof of \cite[Thm.~4.1]{manzano24}:

\begin{cor}\label{cor:foliation_berger}
    For $\Berger$ suppose $(1-x_0^2)\kappa-4\tau^2>0$, i.e., $\Tfamily$ with horizontal pitch $a=\frac{2\tau}{\kappa}$ does not produce a foliation. Then the tube height $\hmax(H)$ attains its maximum at $H=\sqrt{\frac{(1-x_0^2)\kappa-4\tau^2}{4x_0^2}}$ and the subfamily of tubes
    \begin{equation*}
        \hat{\Tfamily}\coloneqq\left\lbrace T_a(H):
        H\in\left(\sqrt{\frac{(1-x_0^2)\kappa-4\tau^2}{4x_0^2}},\infty\right)\right\rbrace
    \end{equation*}
    foliates an open subset of $\Berger$.
\end{cor}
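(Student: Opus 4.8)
The plan is to copy, mutatis mutandis, the argument of Corollary~\ref{cor:foliation_skr}, using Manzano's foliation theorem \cite[Thm.~4.1]{manzano24} (and its proof) in place of Theorem~\ref{theo:foliation_skr}. Recall from that proof that, exactly as in our Theorem~\ref{theo:foliation_skr}, two distinct profile curves $\gamma_a(\,\cdot\,;H_1)$ and $\gamma_a(\,\cdot\,;H_2)$ with $H_1\ne H_2$ can only become tangent at $\sigma=\pi$, since the radius $r(\sigma;H)$ is strictly monotone in $H$ for every $\sigma\ne\pi$; hence, on any subinterval of $(0,\infty)$ on which $H\mapsto h_a(\pi;H)=\hmax(H)$ is strictly monotone, the profile curves are pairwise disjoint and sweep out an open subset of the orbit space $\EKT/G_a$, and the corresponding tubes foliate an open subset of $\Berger$ provided the embeddedness condition \eqref{eq:embedding_condition} holds (equivalently \eqref{eq:embedding_condition2} for the horizontal pitch $a=a_{1,2}$). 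In particular $\hmax$ is maximised exactly where $\partial_H\hmax(H)=0$, and $\Tfamily$ itself foliates if and only if $\partial_H\hmax<0$ for all $H>0$.

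Next I would take the explicit expression for $\partial_H\hmax(H)$ obtained in \cite[proof of Thm.~4.1]{manzano24} and perform the substitution $x\coloneqq\sqrt{\dfrac{\kappa-4\tau^2}{4H^2+\kappa}}$ (note that the hypothesis $(1-x_0^2)\kappa-4\tau^2>0$ forces $\kappa-4\tau^2>0$, otherwise it is vacuous), which is a strictly decreasing diffeomorphism of $(0,\infty)$ onto $\bigl(0,\sqrt{1-4\tau^2/\kappa}\bigr)$. As in the $\SKR$ case this turns the sign condition into
$$
\partial_H\hmax(H)\le 0
\quad\Longleftrightarrow\quad
\arcoth\!\Bigl(\tfrac1x\Bigr)-\tfrac1x\le 0
\quad\Longleftrightarrow\quad
x\artanh(x)\le 1
\quad\Longleftrightarrow\quad
x\le x_0 .
$$
The hypothesis is equivalent to $\sqrt{1-4\tau^2/\kappa}>x_0$, i.e.\ the right endpoint of the $x$-range lies above $x_0$; hence $x=x_0$ is attained at a single value $H=H^\star$, and solving $x_0=\sqrt{(\kappa-4\tau^2)/(4(H^\star)^2+\kappa)}$ gives precisely $H^\star=\sqrt{\dfrac{(1-x_0^2)\kappa-4\tau^2}{4x_0^2}}$. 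Since $x$ decreases in $H$, we get $\partial_H\hmax>0$ on $(0,H^\star)$ and $\partial_H\hmax<0$ on $(H^\star,\infty)$, so $\hmax$ is strictly increasing then strictly decreasing and attains its global maximum at $H^\star$. In particular $\hmax$ is strictly monotone on $(H^\star,\infty)$, and the first paragraph yields that $\hat{\Tfamily}=\{T_a(H):H>H^\star\}$ foliates an open subset of $\Berger$.

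The conceptual content is identical to Corollary~\ref{cor:foliation_skr}; the only genuine work — and the main obstacle — is bookkeeping: one must match the model of $\Berger$ used in \cite{manzano24} with \eqref{model} in order to transcribe the explicit formula for $\hmax(H)$, and then verify that the substitution $x=\sqrt{(\kappa-4\tau^2)/(4H^2+\kappa)}$ indeed linearises $\partial_H\hmax$ into $x\artanh(x)\le1$. One should also confirm, as in the proof of Theorem~\ref{theo:foliation_skr} (using $\hmax(0^+)=\tfrac{\pi}{2}\abs{a}$ from Lemma~\ref{lem:limitsurface} and $\hmax(H)\to 0$ from Lemma~\ref{lem:limit_infinity}), that on $(H^\star,\infty)$ the maximal height stays small enough for the embeddedness condition \eqref{eq:embedding_condition} to hold, so that the disjoint profile curves in the orbit space lift to genuine, pairwise disjoint embedded tubes in $\Berger$.
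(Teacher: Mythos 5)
Your proposal is correct and follows essentially the same route as the paper, whose proof of this corollary is a one-line appeal to ``the same arguments'' as Corollary~\ref{cor:foliation_skr} applied to the explicit expression for $\partial_H\hmax$ in the proof of \cite[Thm.~4.1]{manzano24}: locate the unique zero of $\partial_H\hmax$ via the $x\artanh(x)=1$ condition and observe that monotonicity of $\hmax$ on $(H^\star,\infty)$, together with the tangency argument at $\sigma=\pi$, yields the foliation of an open set. Your substitution $x=\sqrt{(\kappa-4\tau^2)/(4H^2+\kappa)}$ reproduces both Manzano's foliation threshold $(1-x_0^2)\kappa-4\tau^2\leq0$ and the stated critical value $H^\star$, so the bookkeeping you flag does check out.
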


\section{Isoperimetric Profile in Berger spheres}
\label{sec:isoperimetric_profile}

We consider the isoperimetric problem in Berger spheres:

\begin{center}
	\parbox{0.8\textwidth}{
		\textit{Given a number $V\in\left(0,\vol(\Berger)\right)$, find the embedded compact surfaces of least area enclosing a domain of volume $V$.}
	}
\end{center}


Torralbo and Urbano have proven that for $\frac{4\tau^2}{\kappa}\in\left[\frac{1}{3},1\right)$ the solutions to the isoperimetric problem are CMC spheres~\cite[Cor.~7.1]{torralbo_urbano}. But for $\frac{4\tau^2}{\kappa}\in\left(0,\frac{1}{3}\right)$ the situation is quite different: There are examples of stable CMC Hopf tori, but also unstable CMC spheres in $\Berger(\kappa,\tau)$ (see \cite{torralbo_urbano}). In this case the solutions can be either stable CMC spheres or stable CMC~tori and the isoperimetric problem is still open. Manzano has recently shown numerically that horizontal tubes can be discarded from this question, as they have larger area for given volume than CMC spheres if $\kappa-4\tau^2>0$, and larger area than CMC Hopf tori if $\kappa-4\tau^2<0$~\cite{manzano24}. We provide numerical evidence indicating that also the screw motion tubes can be discarded by computing the isoperimetric profile $H\mapsto(\vol(T_a(H)),\area(T_a(H)))$.

The tube $T_a(H)$ is compact if and only if the pitch satisfies the closing condition~\eqref{eq:closing_condition}, that is $a=a_{n,m}=\frac{n}{m}\,\frac{4\tau}{\kappa}$. As before we restrict our analysis to $n=1$.

The tube $T_a(H)$ with pitch $a=a_{1,m}$ can be parameterized by
\begin{equation*}
	\left[\frac{\pi}{2},\frac{5\pi}{2}\right]\times\left[0,2\pi m\right]\to\EKT,
	\quad
	\left(\sigma,\theta\right)\mapsto\left(r(\sigma),\theta,h_a(\sigma)+a_{1,m}\theta\right),
\end{equation*}
where $r(\sigma)=r(\sigma;H,\Jtube(H))$ and $h_a(\sigma;H)=h_a(\sigma;H,\Jtube(H))$ are solutions of \eqref{ode_sigma}. The enclosed volume can be parameterized by
\begin{equation*}
	\left[r_-,r_+\right]\times\left[0,2\pi m\right]\times\left[-h_a(r),h_a(r)\right]\to\EKT,
	\quad
	\left(r,\theta,h\right)\mapsto\left(r,\theta,h+a_{1,m}\theta\right),
\end{equation*}
where $r_\pm=r_\pm(H)$ are the minimal and maximal radius, see \eqref{eq:radius_notation}, and $h_a(r)$ is the reparametrization of $h_a(\sigma)$ in terms of $r=r(\sigma)$. Area and volume of $T_{a_{n,m}}(H)$ can be derived from these parametrizations via integration as
\begin{equation*}
	\begin{aligned}
		\area(T_{a_{n,m}}(H))
		&=4\pi m\int\limits_{\frac{\pi}{2}}^{\frac{3\pi}{2}} \left[\left(\sns(r)+\left(4\tau\sns\!\left(\tfrac{r}{2}\right)-a\right)^2\right)\left(\frac{dr}{d\sigma}\right)^2 +\sns(r)\left(\frac{dh}{d\sigma}\right)^2\right]^{1/2}\,d\sigma, \\[0.5em]
		\vol(T_{a_{n,m}}(H))
		&=4\pi m\int\limits_{r_-}^{r_+} \sn(r)\,h(r)\,dr.
	\end{aligned}
\end{equation*}

Note that tubes in $\Berger$ also enclose the complementary volume
\begin{equation*}
    \widetilde{\vol}(T_a(H))=\vol(\Berger)-\vol(T_a(H)).
\end{equation*}

For numerical evaluation we may fix $\kappa=4$ after rescaling. We only present the isoperimetric profile of tubes with (non-horizontal) pitch $a=a_{1,1}$, because tubes with pitch $a=a_{1,m}$ for $m\geq3$ have larger area for given volume. By Lemma~\ref{lem:closing_condition_berger} the tubes with pitch $a=a_{1,1}$ only exist for $\tau^2\in\left(0,\frac{1}{2}\right)$. The isoperimetric profiles are shown in Figure~\ref{fig:isoperimetric_profile} for different values of $\tau$. The data for the horizontal CMC tubes were computed using the formulas given in \cite{manzano24}. The data for the CMC spheres and CMC tori were computed using the formulas given in \cite{torralbo10}.

Although the tubes $T_{a_{1,1}}(H)$ have a better isoperimetric profile than the horizontal tubes, they still have larger area for given volume than CMC spheres or CMC Hopf tori.

\newpage

\begin{figure}[H]
	\centering
	\tiny
	\def\svgwidth{0.48\textwidth}\executeiffilenewer{isoperimetric_profile_tau0.2.svg}{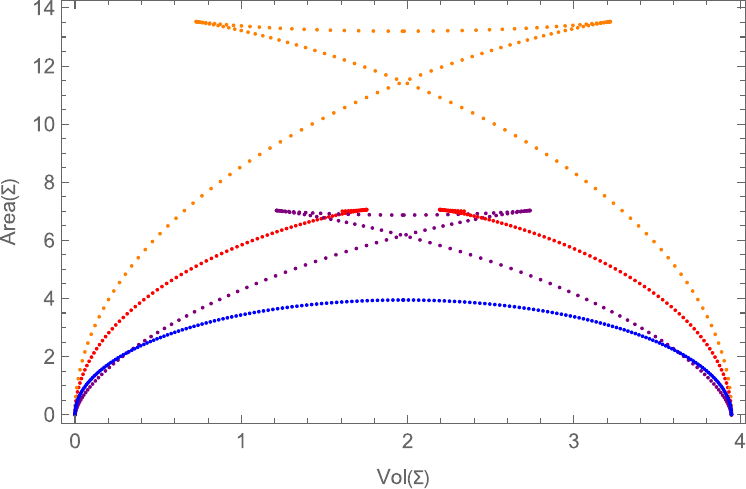}{inkscape -z -D --file=isoperimetric_profile_tau0.2.svg 	--export-pdf=isoperimetric_profile_tau0.2.pdf --export-latex}
\begingroup%
  \makeatletter%
  \providecommand\color[2][]{%
    \errmessage{(Inkscape) Color is used for the text in Inkscape, but the package 'color.sty' is not loaded}%
    \renewcommand\color[2][]{}%
  }%
  \providecommand\transparent[1]{%
    \errmessage{(Inkscape) Transparency is used (non-zero) for the text in Inkscape, but the package 'transparent.sty' is not loaded}%
    \renewcommand\transparent[1]{}%
  }%
  \providecommand\rotatebox[2]{#2}%
  \newcommand*\fsize{\dimexpr\f@size pt\relax}%
  \newcommand*\lineheight[1]{\fontsize{\fsize}{#1\fsize}\selectfont}%
  \ifx\svgwidth\undefined%
    \setlength{\unitlength}{357.92187502bp}%
    \ifx\svgscale\undefined%
      \relax%
    \else%
      \setlength{\unitlength}{\unitlength * \real{\svgscale}}%
    \fi%
  \else%
    \setlength{\unitlength}{\svgwidth}%
  \fi%
  \global\let\svgwidth\undefined%
  \global\let\svgscale\undefined%
  \makeatother%
  \begin{picture}(1,0.65447243)%
    \lineheight{1}%
    \setlength\tabcolsep{0pt}%
    \put(0,0){\includegraphics[width=\unitlength,page=1]{isoperimetric_profile_tau0.2.pdf}}%
    \put(0.11431279,0.58851876){\makebox(0,0)[lt]{\lineheight{1.25}\smash{\begin{tabular}[t]{l}\textbf{$\tau^2=1/25$}\end{tabular}}}}%
  \end{picture}%
\endgroup%

	\hfill
	\def\svgwidth{0.48\textwidth}\executeiffilenewer{isoperimetric_profile_tau0.25.svg}{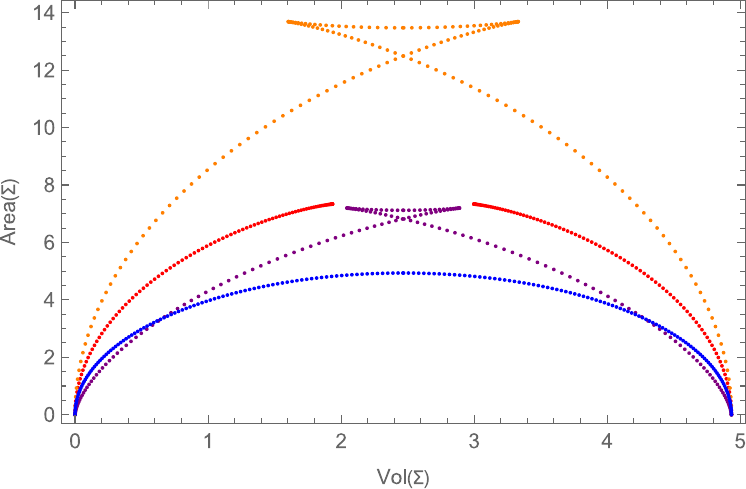}{inkscape -z -D --file=isoperimetric_profile_tau0.25.svg 	--export-pdf=isoperimetric_profile_tau0.25.pdf --export-latex}
\begingroup%
  \makeatletter%
  \providecommand\color[2][]{%
    \errmessage{(Inkscape) Color is used for the text in Inkscape, but the package 'color.sty' is not loaded}%
    \renewcommand\color[2][]{}%
  }%
  \providecommand\transparent[1]{%
    \errmessage{(Inkscape) Transparency is used (non-zero) for the text in Inkscape, but the package 'transparent.sty' is not loaded}%
    \renewcommand\transparent[1]{}%
  }%
  \providecommand\rotatebox[2]{#2}%
  \newcommand*\fsize{\dimexpr\f@size pt\relax}%
  \newcommand*\lineheight[1]{\fontsize{\fsize}{#1\fsize}\selectfont}%
  \ifx\svgwidth\undefined%
    \setlength{\unitlength}{357.92188074bp}%
    \ifx\svgscale\undefined%
      \relax%
    \else%
      \setlength{\unitlength}{\unitlength * \real{\svgscale}}%
    \fi%
  \else%
    \setlength{\unitlength}{\svgwidth}%
  \fi%
  \global\let\svgwidth\undefined%
  \global\let\svgscale\undefined%
  \makeatother%
  \begin{picture}(1,0.65447243)%
    \lineheight{1}%
    \setlength\tabcolsep{0pt}%
    \put(0,0){\includegraphics[width=\unitlength,page=1]{isoperimetric_profile_tau0.25.pdf}}%
    \put(0.11391083,0.59719231){\makebox(0,0)[lt]{\lineheight{1.25}\smash{\begin{tabular}[t]{l}\textbf{$\tau^2=1/16$}\end{tabular}}}}%
  \end{picture}%
\endgroup%

	\\[5mm]
	\def\svgwidth{0.48\textwidth}\executeiffilenewer{isoperimetric_profile_tau0.32.svg}{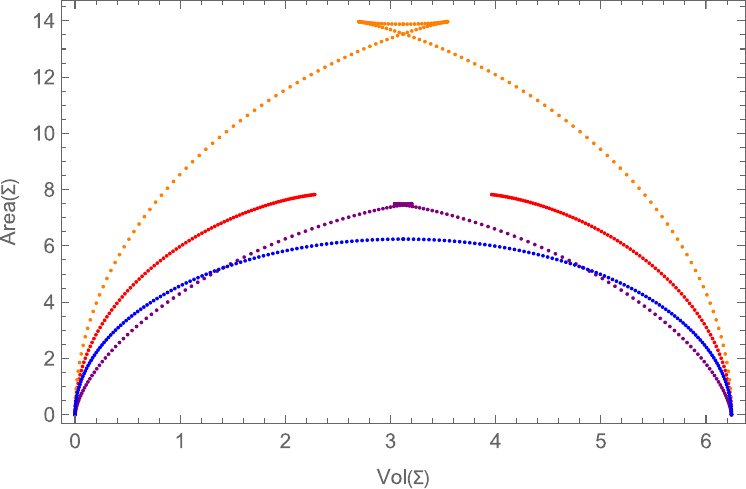}{inkscape -z -D --file=isoperimetric_profile_tau0.32.svg 	--export-pdf=isoperimetric_profile_tau0.32.pdf --export-latex}
\begingroup%
  \makeatletter%
  \providecommand\color[2][]{%
    \errmessage{(Inkscape) Color is used for the text in Inkscape, but the package 'color.sty' is not loaded}%
    \renewcommand\color[2][]{}%
  }%
  \providecommand\transparent[1]{%
    \errmessage{(Inkscape) Transparency is used (non-zero) for the text in Inkscape, but the package 'transparent.sty' is not loaded}%
    \renewcommand\transparent[1]{}%
  }%
  \providecommand\rotatebox[2]{#2}%
  \newcommand*\fsize{\dimexpr\f@size pt\relax}%
  \newcommand*\lineheight[1]{\fontsize{\fsize}{#1\fsize}\selectfont}%
  \ifx\svgwidth\undefined%
    \setlength{\unitlength}{357.92188074bp}%
    \ifx\svgscale\undefined%
      \relax%
    \else%
      \setlength{\unitlength}{\unitlength * \real{\svgscale}}%
    \fi%
  \else%
    \setlength{\unitlength}{\svgwidth}%
  \fi%
  \global\let\svgwidth\undefined%
  \global\let\svgscale\undefined%
  \makeatother%
  \begin{picture}(1,0.65447243)%
    \lineheight{1}%
    \setlength\tabcolsep{0pt}%
    \put(0,0){\includegraphics[width=\unitlength,page=1]{isoperimetric_profile_tau0.32.pdf}}%
    \put(0.11474405,0.59703108){\makebox(0,0)[lt]{\lineheight{1.25}\smash{\begin{tabular}[t]{l}\textbf{$\tau^2=1/10$}\end{tabular}}}}%
  \end{picture}%
\endgroup%

	\hfill
	\def\svgwidth{0.48\textwidth}\executeiffilenewer{isoperimetric_profile_tau0.45.svg}{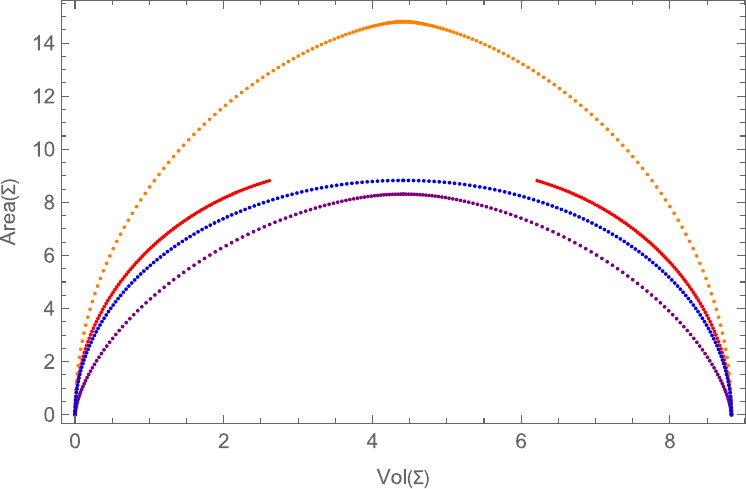}{inkscape -z -D --file=isoperimetric_profile_tau0.45.svg 	--export-pdf=isoperimetric_profile_tau0.45.pdf --export-latex}
\begingroup%
  \makeatletter%
  \providecommand\color[2][]{%
    \errmessage{(Inkscape) Color is used for the text in Inkscape, but the package 'color.sty' is not loaded}%
    \renewcommand\color[2][]{}%
  }%
  \providecommand\transparent[1]{%
    \errmessage{(Inkscape) Transparency is used (non-zero) for the text in Inkscape, but the package 'transparent.sty' is not loaded}%
    \renewcommand\transparent[1]{}%
  }%
  \providecommand\rotatebox[2]{#2}%
  \newcommand*\fsize{\dimexpr\f@size pt\relax}%
  \newcommand*\lineheight[1]{\fontsize{\fsize}{#1\fsize}\selectfont}%
  \ifx\svgwidth\undefined%
    \setlength{\unitlength}{357.92188074bp}%
    \ifx\svgscale\undefined%
      \relax%
    \else%
      \setlength{\unitlength}{\unitlength * \real{\svgscale}}%
    \fi%
  \else%
    \setlength{\unitlength}{\svgwidth}%
  \fi%
  \global\let\svgwidth\undefined%
  \global\let\svgscale\undefined%
  \makeatother%
  \begin{picture}(1,0.65447243)%
    \lineheight{1}%
    \setlength\tabcolsep{0pt}%
    \put(0,0){\includegraphics[width=\unitlength,page=1]{isoperimetric_profile_tau0.45.pdf}}%
    \put(0.11445621,0.59703849){\makebox(0,0)[lt]{\lineheight{1.25}\smash{\begin{tabular}[t]{l}\textbf{$\tau^2=1/5$}\end{tabular}}}}%
  \end{picture}%
\endgroup%

	\\[5mm]
	\def\svgwidth{0.48\textwidth}\executeiffilenewer{isoperimetric_profile_tau0.71.svg}{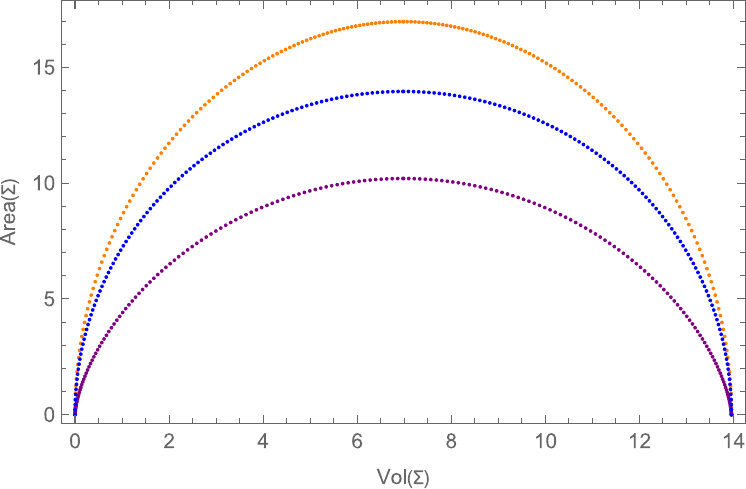}{inkscape -z -D --file=isoperimetric_profile_tau0.71.svg 	--export-pdf=isoperimetric_profile_tau0.71.pdf --export-latex}
\begingroup%
  \makeatletter%
  \providecommand\color[2][]{%
    \errmessage{(Inkscape) Color is used for the text in Inkscape, but the package 'color.sty' is not loaded}%
    \renewcommand\color[2][]{}%
  }%
  \providecommand\transparent[1]{%
    \errmessage{(Inkscape) Transparency is used (non-zero) for the text in Inkscape, but the package 'transparent.sty' is not loaded}%
    \renewcommand\transparent[1]{}%
  }%
  \providecommand\rotatebox[2]{#2}%
  \newcommand*\fsize{\dimexpr\f@size pt\relax}%
  \newcommand*\lineheight[1]{\fontsize{\fsize}{#1\fsize}\selectfont}%
  \ifx\svgwidth\undefined%
    \setlength{\unitlength}{357.92188074bp}%
    \ifx\svgscale\undefined%
      \relax%
    \else%
      \setlength{\unitlength}{\unitlength * \real{\svgscale}}%
    \fi%
  \else%
    \setlength{\unitlength}{\svgwidth}%
  \fi%
  \global\let\svgwidth\undefined%
  \global\let\svgscale\undefined%
  \makeatother%
  \begin{picture}(1,0.65447243)%
    \lineheight{1}%
    \setlength\tabcolsep{0pt}%
    \put(0,0){\includegraphics[width=\unitlength,page=1]{isoperimetric_profile_tau0.71.pdf}}%
    \put(0.1144544,0.59713577){\makebox(0,0)[lt]{\lineheight{1.25}\smash{\begin{tabular}[t]{l}\textbf{$\tau^2=1/2$}\end{tabular}}}}%
  \end{picture}%
\endgroup%

	\hfill
	\def\svgwidth{0.48\textwidth}\executeiffilenewer{isoperimetric_profile_tau1.41.svg}{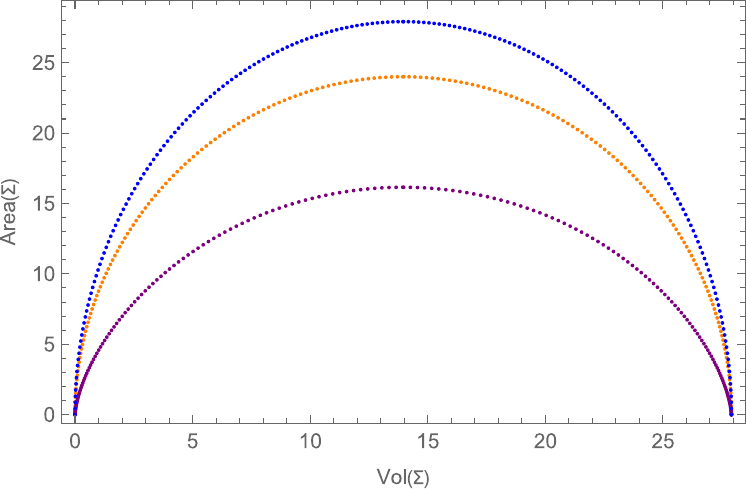}{inkscape -z -D --file=isoperimetric_profile_tau1.41.svg 	--export-pdf=isoperimetric_profile_tau1.41.pdf --export-latex}
\begingroup%
  \makeatletter%
  \providecommand\color[2][]{%
    \errmessage{(Inkscape) Color is used for the text in Inkscape, but the package 'color.sty' is not loaded}%
    \renewcommand\color[2][]{}%
  }%
  \providecommand\transparent[1]{%
    \errmessage{(Inkscape) Transparency is used (non-zero) for the text in Inkscape, but the package 'transparent.sty' is not loaded}%
    \renewcommand\transparent[1]{}%
  }%
  \providecommand\rotatebox[2]{#2}%
  \newcommand*\fsize{\dimexpr\f@size pt\relax}%
  \newcommand*\lineheight[1]{\fontsize{\fsize}{#1\fsize}\selectfont}%
  \ifx\svgwidth\undefined%
    \setlength{\unitlength}{357.92188074bp}%
    \ifx\svgscale\undefined%
      \relax%
    \else%
      \setlength{\unitlength}{\unitlength * \real{\svgscale}}%
    \fi%
  \else%
    \setlength{\unitlength}{\svgwidth}%
  \fi%
  \global\let\svgwidth\undefined%
  \global\let\svgscale\undefined%
  \makeatother%
  \begin{picture}(1,0.65447243)%
    \lineheight{1}%
    \setlength\tabcolsep{0pt}%
    \put(0,0){\includegraphics[width=\unitlength,page=1]{isoperimetric_profile_tau1.41.pdf}}%
    \put(0.11450893,0.59709842){\makebox(0,0)[lt]{\lineheight{1.25}\smash{\begin{tabular}[t]{l}\textbf{$\tau^2=2$}\end{tabular}}}}%
  \end{picture}%
\endgroup%

	\caption{Numerical plots of the isoperimetric profile $H\mapsto\left(\vol(\Sigma),\area(\Sigma)\right)$. Shown are CMC spheres (purple), CMC Hopf tori (blue), CMC tubes with horizontal pitch $a_{1,2}=\frac{2\tau}{\kappa}$ (orange), and CMC tubes with non-horizontal pitch $a_{1,1}=\frac{4\tau}{\kappa}$ (red). The ambient space is the Berger sphere $\E(4,\tau)$, where $\tau$ is as indicated. Notice that the tubes with $a=a_{1,1}$ do not exist for all volumes, see broken red curves in the first and second row.}
	\label{fig:isoperimetric_profile}
\end{figure}

\newpage
\printbibliography		

\end{document}